\documentclass[a4paper,10pt]{article}
\usepackage[utf8]{inputenc}
\usepackage{verbatim}
\usepackage[margin=3cm]{geometry}
\usepackage[algo2e,ruled,vlined]{algorithm2e}
\usepackage{algorithm}
\usepackage{here}
\usepackage{xcolor}
\usepackage{setspace}
\usepackage[normalem]{ulem}
\usepackage{algpseudocode}
\usepackage[T1]{fontenc}
\usepackage{dsfont}
\usepackage{comment}
\usepackage{url}
\usepackage[inline]{enumitem}
\usepackage{microtype}
\usepackage[affil-sl]{authblk}
\usepackage{tikz}
\usepackage{amsmath}
\usepackage{amssymb}
\usepackage{amsthm}
\usepackage{diagbox}
\usepackage{mathtools}
\usepackage{nicefrac}
\usepackage{amsfonts,latexsym,graphics}
\usepackage{color}
\usepackage[hidelinks]{hyperref}
\usepackage{array,multirow}
\usepackage[english]{babel}
\usepackage{epsfig}
\usepackage{graphicx}
\usepackage{bm}
\usepackage{makeidx}

\usepackage{xspace}         

\theoremstyle{definition}
\newtheorem{definition}{Definition}[section]
\newtheorem{example}[definition]{Example}

\newtheorem{remark}[definition]{Remark}

\theoremstyle{plain}
\newtheorem{proposition}[definition]{Proposition}
\newtheorem{lemma}[definition]{Lemma}
\newtheorem{theorem}[definition]{Theorem}

\newtheorem{corollary}[definition]{Corollary}
\newcommand*{\claimproofname}{Proof}




\newcommand{\Z}{\mathds{Z}}

\newcommand{\F}{\mathds{F}}
\newcommand{\Q}{\mathds{Q}}


\def\C{{\mbox {\boldmath $C$}}}

\def\matrix0{{\mbox {\boldmath $O$}}}


\def\vec0{\mbox{\bf 0}}

\newcommand\tran{\mkern-2mu\raise1.25ex\hbox{$\scriptscriptstyle\top$}\mkern-3.5mu}


\def\G{\Gamma}


\DeclareMathOperator{\Sym}{Sym}
\DeclareMathOperator{\id}{id}
\DeclareMathOperator{\ord}{ord}
\DeclareMathOperator{\lcm}{lcm}

\def\set#1{\lbrace#1\rbrace}


%
{\end{list}}



\newcommand{\comments}[1]{}

\title{An infinite class of Neumaier graphs and non-existence results}
\author{Aida Abiad\thanks{\texttt{a.abiad.monge@tue.nl},  Department of Mathematics and Computer Science, Eindhoven University of Technology, The Netherlands\\Department of Mathematics: Analysis, Logic and Discrete Mathematics, Ghent University, Flanders, Belgium\\Department of Mathematics and Data Science of Vrije Universiteit Brussel, Belgium} \quad Wouter Castryck\thanks{\texttt{wouter.castryck@esat.kuleuven.be}, imec-COSIC, Department of Electrical Engineering, KU Leuven, Belgium\\Department of Mathematics: Algebra and Geometry, Ghent University, Flanders, Belgium} \quad Maarten De Boeck\thanks{\texttt{m.de.boeck@tue.nl},  Department of Mathematics and Computer Science, Eindhoven University of Technology, The Netherlands\\Department of Mathematics: Algebra and Geometry, Ghent University, Flanders, Belgium} \quad Jack H. Koolen\thanks{\texttt{koolen@ustc.edu.cn}, School of Mathematical Sciences, University of Science and Technology of China, Hefei, China\\CAS Wu Wen-Tsun Key Laboratory of Mathematics, University of Science and Technology of China, Hefei, China} \quad  Sjanne Zeijlemaker\thanks{\texttt{s.zeijlemaker@tue.nl},  Department of Mathematics and Computer Science, Eindhoven University of Technology, The Netherlands}}

\date{}

\begin{document}
\maketitle

\begin{abstract}
A Neumaier graph is a non-complete edge-regular graph containing a regular clique. A Neumaier graph that is not strongly regular is called a strictly Neumaier graph. In this work we present a new construction of strictly Neumaier graphs, and using Jacobi sums, we show that our construction produces infinitely many instances. Moreover, we prove some necessary conditions for the existence of (strictly) Neumaier graphs that allow us to show that several parameter sets are not admissible.
\end{abstract}

\section{Introduction}

A regular graph is called \emph{edge-regular} if any two adjacent vertices have the same number of common neighbours. A \emph{regular clique} in a regular graph is a clique having the property that every vertex outside of it is adjacent to the same positive number of vertices of the clique, denoted by $e$. A \emph{Neumaier graph} is a non-complete edge-regular graph containing a regular clique. A Neumaier graph that is not a strongly regular graph is called a \emph{strictly Neumaier graph}.
\par In his 1981 paper \cite{neumaier1981regular}, Neumaier studied regular cliques in edge-regular graphs, and he showed that all vertex-transitive, edge-transitive graphs with a regular clique are strongly regular. He subsequently raised the question whether there are edge-regular graphs with a regular clique, that are not strongly regular, i.e.\ whether there are strictly Neumaier graphs.  Greaves and Koolen \cite{greaves2018edge} gave an answer to this question by constructing an infinite family of strictly Neumaier graphs. The same authors provided a second construction in \cite{greaves2019another}. All strictly Neumaier graphs described in \cite{greaves2018edge,greaves2019another} have $e=1$. Evans, Goryainov and Panasenko \cite{evans2018smallest} presented a family of strictly Neumaier graphs which is the only known family with $e>1$. Abiad, De Bruyn,  D'haeseleer and Koolen \cite{dcc2020} investigated Neumaier graphs with few eigenvalues, and showed that Neumaier graphs with four distinct eigenvalues do not exist. 
\par In this article we present a new infinite class of Neumaier graphs, and we also show some non-existence results. In Section \ref{sec:nonexistence} we prove two  new conditions on the parameter set of (strictly) Neumaier graphs (Corollary \ref{coro:firstkilledsetparameters} and Theorem \ref{th:secondkilledsetparameters}), which show that infinitely many parameter sets for strictly Neumaier graphs that had not been ruled out by previous results are not feasible (see  Table \ref{table:parametersstrictlyNG}). In Section \ref{sec:newfamily} we present a new family of (strictly) Neumaier graphs (Theorem \ref{th:constructionpq}). Our construction depends on three parameters: a prime $p$, an odd integer $q$ and an integer $a\in(\Z/pq\Z)^{*}$, fulfilling several conditions. In Section \ref{sec:parameters}, which is purely number-theoretic, we discuss these parameters and show that the family from Section \ref{sec:newfamily} contains an infinite number of strictly Neumaier graphs.

\section{Preliminaries}

Throughout this paper we will consider simple graphs (undirected, loopless, no multiple edges). For a graph $\G$ we denote the set of vertices at distance $i$ from a given vertex $u$ by $\G_{i}(u)$; in particular, the neighbours of $u$ are denoted by $\G_{1}(u)=\G(u)$. Adjacency between vertices is denoted by $\sim$.
\par A graph is \emph{($k$-)regular} if each vertex is adjacent to $k$ vertices. A regular graph is \emph{($\lambda$-)edge-regular} if it is non-empty, and any pair of adjacent vertices has exactly $\lambda$ common neighbours for some integer $\lambda$; it is \emph{($\mu$-)co-edge-regular} if it is not complete, and any pair of non-adjacent vertices has exactly $\mu$ common neighbours for some integer $\mu$. A graph that is both edge-regular and co-edge-regular is called \emph{strongly regular}. An edge-regular graph with parameters $(v,k,\lambda)$ has $v$ vertices, is $k$-regular and $\lambda$-edge-regular; a co-edge-regular graph with parameters $(v,k,\mu)$ has $v$ vertices, is $k$-regular and $\mu$-co-edge-regular. A strongly regular graph has parameters $(v,k,\lambda,\mu)$ if it is edge-regular with parameters $(v,k,\lambda)$ and co-edge-regular with parameters $(v,k,\mu)$.
\par It is immediate that $vk \equiv 0 \pmod{2}$ for a $k$-regular graph with $v$ vertices. We have the following classic result for edge-regular graphs.

\begin{theorem}[{\cite[Section 1.1]{bcn}}]\label{th:ERG}
    Let $\Gamma$ be an edge-regular graph with parameters $(v,k,\lambda)$, then 
    \begin{enumerate}[label=(\roman*)]
        \item $v-2k+\lambda\ge 0$,
        \item $\lambda k \equiv 0 \pmod 2$,
        \item $vk\lambda \equiv 0 \pmod 6$.
    \end{enumerate}
\end{theorem}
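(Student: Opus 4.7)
All three parts admit short double-counting proofs; the plan is as follows.

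For part (i), the idea is to apply inclusion-exclusion to the closed neighbourhoods $\Gamma[u]=\{u\}\cup\Gamma(u)$ and $\Gamma[v]=\{v\}\cup\Gamma(v)$ of a pair of adjacent vertices $u\sim v$. Each closed neighbourhood has size $k+1$, and since $u,v$ are adjacent we have $u\in\Gamma[v]$, $v\in\Gamma[u]$ and $|\Gamma(u)\cap\Gamma(v)|=\lambda$, so $|\Gamma[u]\cap\Gamma[v]|=\lambda+2$. Hence $|\Gamma[u]\cup\Gamma[v]|=2(k+1)-(\lambda+2)=2k-\lambda$, which must be at most $v$, yielding $v-2k+\lambda\ge 0$.

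For part (ii), the plan is to count, for a fixed vertex $u$, the number of edges of $\Gamma$ with both endpoints in $\Gamma(u)$ (equivalently, the number of triangles through $u$). Summing over the $k$ neighbours $w\in\Gamma(u)$ the number of neighbours of $w$ that lie in $\Gamma(u)\setminus\{w\}$, one gets $k\lambda$ by edge-regularity, and each such edge is counted twice. So the number of triangles through $u$ equals $k\lambda/2$, forcing $k\lambda\equiv 0\pmod 2$.

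For part (iii), the plan is to count the total number of triangles in $\Gamma$ in two ways. Summing the quantity from part (ii) over all $v$ vertices of $\Gamma$ and then dividing by $3$ (since each triangle is counted once at each of its vertices), the total number of triangles equals $vk\lambda/6$. Since this is a non-negative integer, $vk\lambda\equiv 0\pmod 6$ follows. No step here is a real obstacle; the only thing to be careful about is making sure the double-counting is set up so that the denominators $2$ (in (ii)) and $6$ (in (iii)) are justified by the combinatorial counts rather than assumed a priori.
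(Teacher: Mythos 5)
Your proof is correct and is essentially the standard argument: the paper itself gives no proof of Theorem~\ref{th:ERG}, citing it to \cite[Section 1.1]{bcn}, where these facts are established by exactly the counting arguments you describe (union of closed neighbourhoods of an adjacent pair for (i), edges inside a neighbourhood for (ii), and triangles counted three times, or equivalently ordered triangles counted six times, for (iii)). The one hypothesis you implicitly use in (i) --- that an adjacent pair exists --- is guaranteed by the paper's convention that edge-regular graphs are non-empty, so there is no gap.
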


Let $\G$ be a graph with vertex set $V(\G)$ and $S\subset V(\G)$. If every vertex in $V(\G) \setminus S$ has precisely $e>0$ neighbors in $S$, we say that $S$ is \emph{$e$-regular}. A \emph{clique} of $\G$ is a subset of $V(\G)$ wherein all vertices are pairwise adjacent; a \emph{coclique} of $\G$ is a subset of $V(\G)$ wherein all vertices are pairwise non-adjacent. 
\par A graph is a \textit{Neumaier graph} with parameters $(v,k,\lambda;e,s)$ if it is edge-regular with parameters $(v,k,\lambda)$ and has an $e$-regular clique of size $s$. A Neumaier graph which is not strongly regular is called \textit{strictly Neumaier}.
\par Neumaier already made the following observations about the regular cliques in Neumaier graphs.
\begin{theorem}[\cite{neumaier1981regular}, Theorem 1.1]\label{th:maxcliques}
     Let $\G$ be a Neumaier graph with parameters $(v,k,\lambda;e,s)$. Then 
     \begin{enumerate}[label=(\roman*)]
        \item the largest clique of $\G$ has size $s$,
        \item all regular cliques are $e$-regular,
        \item the regular cliques are exactly the cliques of size $s$.
    \end{enumerate}
\end{theorem}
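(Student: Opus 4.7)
The approach will be to prove all three parts simultaneously via a Cauchy--Schwarz argument applied to neighborhood counts. First, fix an arbitrary clique $K \subseteq V(\G)$ of size $t$ and, for each $v \notin K$, set $r_v = |\G(v) \cap K|$. Double counting edges from $K$ to its complement gives $\sum_{v \notin K} r_v = t(k - t + 1)$. For each pair of (necessarily adjacent) vertices of $K$, edge-regularity gives $\lambda$ common neighbors in $\G$, of which $t - 2$ lie in $K$; summing this count yields $\sum_{v \notin K} \binom{r_v}{2} = \binom{t}{2}(\lambda - t + 2)$, hence a closed formula for $\sum r_v^2$.

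Then Cauchy--Schwarz, in the form $(v - t)\sum r_v^2 \geq \big(\sum r_v\big)^2$, rearranges into the polynomial inequality
\[
P(t) := (v - t)\big[(t - 1)(\lambda - t + 2) + (k - t + 1)\big] - t(k - t + 1)^2 \;\geq\; 0,
\]
with equality iff all $r_v$ coincide---equivalently, iff $K$ is a regular clique. Specialising to the given $e$-regular clique $C$ of size $s$, where equality must hold, together with the edge-count identity $s(k - s + 1) = (v - s)e$, recovers the standard Neumaier relation $(s - 1)(\lambda - s + 2) = (e - 1)(k - s + 1)$ and, in particular, shows $P(s) = 0$.

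Next, a direct expansion shows the cubic terms of $P$ cancel, so $P$ is at most quadratic in $t$ with leading coefficient $2k - v - \lambda \leq 0$ by Theorem~\ref{th:ERG}(i). Vieta's formulas give the product of the two roots as $v(k - \lambda - 1)/(2k - v - \lambda)$. Since the case $k = \lambda + 1$ would force $\G$ to decompose as a disjoint union of complete subgraphs---contradicting either the non-completeness of $\G$ or positivity of $e$---we have $k > \lambda + 1$, making the numerator positive and the denominator non-positive. Consequently the second root of $P$ is $\leq 0$ in the genuinely quadratic case, or simply absent in the linear case $2k = v + \lambda$. In either situation, $P(t) \geq 0$ forces $t \leq s$ for any positive integer $t$, establishing (i).

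Finally, for (ii) and (iii): any clique $K$ of size $s$ attains $P(s) = 0$, so Cauchy--Schwarz is tight, making $r_v$ constant on $V(\G) \setminus K$, with the common value necessarily $e$; thus $K$ is $e$-regular. Conversely, any regular clique saturates Cauchy--Schwarz, so $P(|K|) = 0$, which by the root analysis forces $|K| = s$, and its regularity constant is then $e$. The main technical hurdle is this control over the roots of $P$; both the cancellation of cubic terms and the non-degeneracy ruling out $k = \lambda + 1$ are essential.
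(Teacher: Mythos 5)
The paper does not prove this statement itself---it is quoted from Neumaier's 1981 paper---so there is no internal proof to compare against; judged on its own, your argument is correct and complete, and it is essentially the classical variance-counting proof of this result. The two counting identities $\sum_{v\notin K} r_v = t(k-t+1)$ and $\sum_{v\notin K}\binom{r_v}{2}=\binom{t}{2}(\lambda-t+2)$ are right, the cubic terms of $P$ do cancel, the leading coefficient $2k-v-\lambda$ is nonpositive by Theorem~\ref{th:ERG}(i), and the constant term $v(k-\lambda-1)$ is positive because $\lambda=k-1$ would force $\G$ to be a disjoint union of copies of $K_{k+1}$, which is incompatible with having an $e$-regular clique ($e>0$) in a non-complete graph; hence in the quadratic case the roots have opposite signs and the positive one is $s$, while in the linear case $v=2k-\lambda$ (which genuinely occurs for, e.g., complete multipartite Neumaier graphs, so it is good that you treat it) the slope is forced negative by $P(0)>0$ and $P(s)=0$. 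Two small points you leave implicit but which are easily supplied: first, ``all $r_v$ coincide'' is equivalent to $K$ being a regular clique only because the common value cannot be $0$; that would require $t=k+1$, i.e.\ $K$ a complete connected component, which is impossible since the given $e$-regular clique with $e>0$ dominates $\G$ and hence makes it connected, while $\G$ is non-complete. Second, a double root at $s$ is excluded because the product of the roots is negative, so the equivalence ``$P(t)=0$ and $t>0$ iff $t=s$'' used for parts (ii) and (iii) is airtight. Neither issue is a gap worth worrying about.
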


Observe that the parameters naturally satisfy $e\le s-1$, $k < v-1$ and $s-2\le \lambda < k$. Theorem \ref{th:existenceconditions} lists some additional conditions on the parameters of Neumaier graphs.

\begin{theorem}[{\cite[Theorem 1.1]{neumaier1981regular} and \cite[Theorem 1]{evans2018smallest}}]\label{th:existenceconditions}
    The parameters $(v,k,\lambda;e,s)$ of a Neumaier graph satisfy the following conditions:
     \begin{enumerate}[label=(\roman*)]
        \item $k-s+e-\lambda -1 \ge 0$,
        \item $s(k-s+1)=(v-s)e$,
        \item $s(s-1)(\lambda-s+2)=(v-s)e(e-1)$.
    \end{enumerate}
\end{theorem}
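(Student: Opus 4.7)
The plan is to prove all three items by standard double-counting plus one neighbourhood count, exploiting the fact that $S$ is a clique of size $s$ in which every vertex is adjacent to the other $s-1$ clique vertices, and that every vertex outside $S$ has exactly $e$ neighbours in $S$.

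For (ii), I would double-count the set of ordered edges $\{(x,y): x\in S,\ y\in V(\Gamma)\setminus S,\ x\sim y\}$. Counting by $x$: each clique vertex $x$ has $k$ neighbours overall, of which $s-1$ lie in $S$ (since $S$ is a clique), giving $k-s+1$ outside-neighbours and $s(k-s+1)$ in total. Counting by $y$: every $y\in V(\Gamma)\setminus S$ contributes exactly $e$, giving $(v-s)e$. Equating yields (ii).

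For (iii), I would similarly double-count triples $\{(x,x',y): x,x'\in S,\ x\neq x',\ y\in V(\Gamma)\setminus S,\ y\sim x,\ y\sim x'\}$. Counting by the pair $(x,x')$: since $S$ is a clique, $x\sim x'$, so they have $\lambda$ common neighbours; exactly $s-2$ of these lie in $S$ (the remaining clique vertices), so $\lambda-s+2$ lie outside. This gives $s(s-1)(\lambda-s+2)$ and incidentally shows $\lambda\geq s-2$, matching the observation just above the theorem. Counting by $y$: $y$ has $e$ neighbours in $S$, contributing $e(e-1)$ ordered pairs, for a total of $(v-s)e(e-1)$.

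For (i), I would fix $x\in S$ and pick any neighbour $y$ of $x$ outside $S$; such a $y$ exists because (ii) forces $k-s+1>0$ (as $e>0$ and $v>s$, which already follows from $e\leq s-1$ and $v>k\geq s$). Among the $\lambda$ common neighbours of $x$ and $y$, exactly $e-1$ lie in $S\setminus\{x\}$: the clique is complete, so every vertex of $S\setminus\{x\}$ is a neighbour of $x$, and precisely $e-1$ of them are also neighbours of $y$ (since $y$ has $e$ neighbours in $S$, one of which is $x$). Hence $\lambda-(e-1)$ common neighbours lie outside $S$, and each such vertex is a neighbour of $x$ lying in $V(\Gamma)\setminus S$ and distinct from $y$ itself. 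Since $x$ has $k-(s-1)=k-s+1$ outside-$S$ neighbours in total, one of which is $y$, we obtain $\lambda-e+1\leq k-s$, i.e.\ $k-s+e-\lambda-1\geq 0$.

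The computations are all elementary; the only subtlety is in (i), where one must remember to exclude $y$ itself from the outside-$S$ neighbours of $x$ available for the common-neighbour count — otherwise one only gets $k-s+e-\lambda\geq 0$ and loses the crucial $-1$. A secondary small point is justifying the existence of a suitable $y$, which I handle by invoking (ii) together with the assumption $e>0$.
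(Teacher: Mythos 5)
Your argument is correct: the two double counts give (ii) and (iii) exactly, and the count of common neighbours of $x\in S$ and an outside neighbour $y$ (with $e-1$ of them inside $S$ and the remaining $\lambda-e+1$ among the $k-s$ outside-$S$ neighbours of $x$ other than $y$) gives (i), with the existence of $y$ properly justified via (ii) and $v>s$. Note that the paper does not prove this statement itself but imports it from Neumaier and from Evans--Goryainov--Panasenko; your proof is the standard one found in those sources, so there is nothing to contrast.
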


For strictly Neumaier graphs some additional conditions were derived. We refer to \cite[Proposition 5.1]{greaves2018edge}, \cite[Theorem 1.3]{neumaier1981regular}, \cite[Theorem 4.1]{soicher2015cliques} and \cite[Lemma 4.7]{evansphd}, and  \cite[Theorem 4.10]{evansphd}.

\begin{theorem}\label{th:existenceconditionsstrict}
     The parameters $(v,k,\lambda;e,s)$ of a strictly Neumaier graph satisfy
     \begin{enumerate}[label=(\roman*)]
        \item $s \ge 4$ and, as a result, $\lambda \ge 2$,
        \item $e \le k-2$,
        \item $v \notin \set{2k-\lambda, 2k-\lambda+1}$,
        \item $k-s+e-\lambda -1 \ge 1$.
    \end{enumerate}
\end{theorem}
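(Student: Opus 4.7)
The plan is to establish each of the four conditions by contradiction: assuming a given condition fails, one deduces that $\Gamma$ must be strongly regular, contradicting the strictly Neumaier hypothesis.

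For (i), note that $s \geq e + 1 \geq 2$, so only $s \in \{2, 3\}$ must be ruled out. For each such $s$ and each admissible $e \leq s - 1$, Theorem~\ref{th:existenceconditions}(ii)--(iii) determine $v$ and $\lambda$ in terms of $k$. For $s = 2$, dividing (iii) by (ii) yields $\lambda = (k-1)(e-1)$; thus $e = 1$ forces $(v, \lambda) = (2k, 0)$, and a direct structural analysis (using that $\Gamma(u)$ and $\Gamma(w)$ are disjoint for the regular edge $uw$) identifies $\Gamma$ with $K_{k,k}$ (strongly regular), while $e = 2$ forces $(v, \lambda) = (k+1, k-1)$, giving the complete graph $K_v$ (excluded by definition). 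A parallel enumeration for $s = 3$ gives parameter sets $(3(k-1), k, 1; 1, 3)$ and $(3k/2, k, k/2; 2, 3)$; the latter is forced to be the complete tripartite graph $K_{n,n,n}$ with $n = k/2$ (strongly regular), and the former is ruled out or identified with a specific strongly regular graph by a triangle-counting argument. The consequence $\lambda \geq 2$ then follows from the bound $s - 2 \leq \lambda$ recorded after Theorem~\ref{th:existenceconditions}.

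For (ii), the bound $e \leq s - 1$ combined with $s \leq k$ (the case $s = k + 1$ would make the clique a connected component, forcing $\Gamma$ to be a disjoint union of copies of $K_{k+1}$, hence strongly regular) yields $e \leq k - 1$. To exclude $e = k - 1$, observe that this forces $s = k$, whereupon Theorem~\ref{th:existenceconditions}(ii) becomes $(v - k)(k - 1) = k$, so $k - 1$ must divide $k$, giving $k = 2$ and $s = 2$, which contradicts (i).

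For (iii), the identity $|\Gamma(u) \cup \Gamma(w)| = 2k - \lambda$ for any edge $uw$ is the main tool. Fix a vertex $v_0$ and count walks of length two from $v_0$ to its non-neighbours: summing $\mu(v_0, y) := |\Gamma(v_0) \cap \Gamma(y)|$ over non-neighbours $y \neq v_0$ gives $k(k - 1 - \lambda)$ by edge-regularity. When $v = 2k - \lambda$, the number of such $y$ is exactly $k - 1 - \lambda$, so the average $\mu$-value equals $k$; since each $\mu(v_0, y) \leq k$, equality holds throughout, so $\Gamma(v_0) = \Gamma(y)$ for every non-neighbour $y$, making $\Gamma$ a complete multipartite graph with equal part sizes, which is strongly regular. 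The case $v = 2k - \lambda + 1$ is handled by a parallel argument, exploiting the unique vertex missed by $\Gamma(u) \cup \Gamma(w)$ for each edge.

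For (iv), suppose $k - s + e - \lambda - 1 = 0$. Pick a regular clique $C$, a vertex $x \in C$, and write $N_{\mathrm{ext}}(x) = \Gamma(x) \setminus C$, which has size $k - s + 1$. For any $y \in N_{\mathrm{ext}}(x)$, the $\lambda$ common neighbours of $x$ and $y$ split as $e - 1$ inside $C$ and $k - s$ outside; since $|N_{\mathrm{ext}}(x) \setminus \{y\}| = k - s$, this forces $y$ to be adjacent to every other vertex of $N_{\mathrm{ext}}(x)$, so $\{x\} \cup N_{\mathrm{ext}}(x)$ is a clique of size $k - s + 2$. Combined with Theorem~\ref{th:maxcliques} this yields $k - s + 2 \leq s$. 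Exploiting this rigid second-neighbourhood structure, a further counting of common neighbours of non-adjacent pairs forces the resulting $\mu$-value to be constant, again contradicting the strictly Neumaier hypothesis.

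The main obstacle is the fine structural work in parts (i) and (iv): in (i) each small-$s$ parameter set must either be identified with a specific strongly regular graph or ruled out by a combinatorial argument, while in (iv) the rigidity induced by the tight bound must be converted into global constancy of the common-neighbour counts, which is the technical heart of the argument.
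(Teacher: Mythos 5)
First, a point of reference: the paper does not prove this theorem at all. It is a compilation of known results imported from four external sources (\cite[Proposition 5.1]{greaves2018edge} for (i), \cite[Theorem 1.3]{neumaier1981regular} and \cite[Theorem 4.1]{soicher2015cliques} for (iii), and \cite[Lemma 4.7]{evansphd}, \cite[Theorem 4.10]{evansphd} for (ii) and (iv)), so there is no internal proof to compare against. Your overall strategy --- assume a condition fails and force strong regularity --- is the right one, and several pieces are genuinely correct: part (ii) is essentially complete; the $s=2$ and $(s,e)=(3,2)$ cases of (i) work (for the latter, the cleanest finish is that the complement has $\mu = v-2k+\lambda = 0$, hence is a disjoint union of cliques, so $\Gamma = K_{k/2,k/2,k/2}$); and your averaging argument for $v = 2k-\lambda$ in (iii) is correct, since the average of $\mu(v_0,y)$ over the $k-\lambda-1$ non-neighbours equals the trivial upper bound $k$.

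However, each of the three hardest steps is asserted rather than proved, and in one case the asserted method cannot work. (a) In (i), the case $(v,k,\lambda;e,s)=(3(k-1),k,1;1,3)$ is dispatched with ``ruled out or identified \ldots{} by a triangle-counting argument,'' which is not a proof; you do not even commit to which of the two alternatives holds. This case is completable --- by Theorem \ref{th:maxcliques} every triangle is a clique of maximum size $s=3$ and hence a $1$-regular clique, and since $\lambda=1$ the triangles through a fixed vertex $x$ partition $\Gamma(x)$, so every non-neighbour of $x$ has exactly one neighbour in each of the $k/2$ triangles through $x$, giving $\mu = k/2$ constant --- but this argument, which crucially uses that \emph{all} triangles are regular cliques, is absent. (b) In (iii), the case $v = 2k-\lambda+1$ is \emph{not} parallel to $v=2k-\lambda$: the same count gives average $\mu = k - k/(k-\lambda) < k$, so there is no extremal forcing, and indeed edge-regularity alone does not suffice here (this is precisely why the statement needs the regular clique; it is Soicher's theorem, not Neumaier's inequality). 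Saying it is ``handled by a parallel argument'' conceals the actual content. (c) In (iv), your derivation that $\{x\}\cup(\Gamma(x)\setminus C)$ is a clique of size $k-s+2\le s$ is correct, but the conclusion ``a further counting \ldots{} forces the resulting $\mu$-value to be constant'' is exactly the technical heart you identify yourself, and no such counting is given; note for instance that $\mu(x,z) = e + |\Gamma(z)\cap (D_x\setminus\{x\})|$ for $z$ at distance two from $x\in C$, and the second term is only automatically constant when $D_x$ is itself a regular clique, i.e.\ when $k = 2s-2$. As it stands the proposal is a plausible roadmap, not a proof: points (b) and (c) in particular are missing ideas, not omitted routine verifications.
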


Table \ref{table:parametersstrictlyNG} lists all parameter sets $(v,k,\lambda;e,s)$ with $v\le 64$ that satisfy the conditions of Theorems \ref{th:ERG}, \ref{th:existenceconditions} and \ref{th:existenceconditionsstrict} (and the trivial conditions mentioned in between), i.e.~the known necessary conditions for the existence of strictly Neumaier graphs. 


\section{Nonexistence results for strictly Neumaier graphs}\label{sec:nonexistence}

\begin{table}[!htb]
\begin{minipage}[t]{.5\linewidth}
\centering
\begin{tabular}{c | c | c | c | c | c }
$v$ & $k$ & $\lambda$ & $e$ & $s$ & Exists?\\\hline\hline
16 & 9 & 4 & 2 & 4 & Yes, \cite{evans2018smallest}\\\hline
21 & 14 & 9 & 4 & 7 & No, Theorem \ref{th:secondkilledsetparameters} \\\hline
22 & 12 & 5 & 2 & 4 &  \\ \hline
24 & 8 & 2 & 1 & 4 & Yes, \cite{evans2018smallest,goryainov2014cayley,greaves2019another} \\  \hline
\multirow[t]{2}{*}{25} & 12 & 5 & 2 & 5 & \\
 & 16 & 9 & 3 & 5 & \\\hline
26 & 15 & 8 & 3 & 6 & \\ \hline
27 & 18 & 12 & 5 & 9 & No, Theorem \ref{th:secondkilledsetparameters}\\ \hline
\multirow[t]{4}{*}{28} & 9 & 2 & 1 & 4 & Yes, \cite{evans2018smallest,greaves2018edge}\\
& \multirow[t]{2}{*}{15} & 6 & 2 & 4 & \\
&  & 8 & 3 & 7 & \\ 
& 18 & 11 & 4 & 7 & \\ \hline
\multirow[t]{2}{*}{33} & 22 & 15 & 6 & 11 & No, Theorem \ref{th:secondkilledsetparameters}\\ 
 & 24 & 17 & 6 & 9 & \\ \hline
34 & 18 & 7 & 2 & 4 & \\ \hline
\multirow[t]{4}{*}{35} & 10 & 3 & 1 & 5 & \\ 
& 16 & 6 & 2 & 5 & \\ 
& 18 & 9 & 3 & 7 & \\ 
& 22 & 12 & 3 & 5 & \\ \hline
\multirow[t]{5}{*}{36} & 11 & 2 & 1 & 4 & \\
& 15 & 6 & 2 & 6 & \\
& 20 & 10 & 3 & 6 & \\ 
& 21 & 12 & 4 & 8 & \\ 
& 25 & 16 & 4 & 6 & \\\hline
\multirow[t]{2}{*}{39} & 26 & 18 & 7 & 13 & No, Theorem \ref{th:secondkilledsetparameters}\\ 
& 30 & 23 & 9 & 13 & No, Corollary \ref{coro:firstkilledsetparameters}\\\hline
\multirow[t]{5}{*}{40} & 12 & 2 & 1 & 4 & Yes, \cite{evans2018smallest}\\ 
& \multirow[t]{2}{*}{21} & 8 & 2 & 4 & \\ 
& & 12 & 4 & 10 & \\ 
& 27 & 18 & 6 & 10 & \\ 
& 30 & 22 & 7 & 10& \\ \hline
\multirow[t]{3}{*}{42} & 11 & 4 & 1 & 6 & \\ 
& 21 & 10 & 3 & 7 & \\ 
& 26 & 15 & 4 & 7 & \\ \hline
44 & 28 & 18 & 6 & 11 & \\\hline
\multirow[t]{8}{*}{45} & 12 & 3 & 1 & 5 & \\
& \multirow[t]{2}{*}{20} & 7 & 2 & 5 & \\
&  & 10 & 3 & 9 & \\
& 24 & 13 & 4 & 9 & \\ 
& \multirow[t]{2}{*}{28} & 15 & 3 & 5 & \\
&  & 17 & 5 & 9 & \\
& 30 & 21 & 8 & 15 & No, Theorem \ref{th:secondkilledsetparameters}\\
& 32 & 22 & 6 & 9 & \\\hline
\multirow[t]{3}{*}{46} & 24 & 9 & 2 & 4 & \\
 & 25 & 12 & 3 & 6 & \\
 & 27 & 16 & 5 & 10 & \\\hline
\multirow[t]{3}{*}{48} & 12 & 4 & 1 & 6 & \\
 & 14 & 2 & 1 & 4 & \\
 & 35 & 26 & 10 & 16 & No, Corollary \ref{coro:firstkilledsetparameters}
\end{tabular}
\end{minipage}%
\begin{minipage}[t]{.5\linewidth}
\centering
\begin{tabular}{c | c | c | c | c | c }
$v$ & $k$ & $\lambda$ & $e$ & $s$ & Exists?\\\hline\hline
\multirow[t]{3}{*}{49} & 18 & 7 & 2 & 7 & \\
 & 24 & 11 & 3 & 7 & \\
 & 30 & 17 & 4 & 7 & \\
 & 36 & 25 & 5 & 7 & \\\hline
50 & 28 & 15 & 4 & 8 & \\\hline
\multirow[t]{2}{*}{51} & 20 & 7 & 2 & 6 & \\
& 34 & 24 & 9 & 17 & No, Theorem \ref{th:secondkilledsetparameters}\\\hline
\multirow[t]{4}{*}{52} & 15 & 2 & 1 & 4 & Yes, \cite{greaves2018edge}\\
&\multirow[t]{2}{*}{27} & 10 & 2 & 4 & \\
& & 16 & 5 & 13 & \\
& 36 & 25 & 8 & 13 & \\\hline
54 & 13 & 4 & 1 & 6 & \\\hline
\multirow[t]{6}{*}{55} & 14 & 3 & 1 & 5 & \\
& 24 & 8 & 2 & 5 & \\
&\multirow[t]{2}{*}{30} & 17 & 5 & 11 & \\
& & 18 & 3 & 5 & \\
& 34 & 21 & 6 & 11 & \\
& 36 & 23 & 6 & 10 & \\\hline
\multirow[t]{4}{*}{56} & 27 & 12 & 3 & 7 & \\
& 30 & 14 & 3 & 6 & \\
& 33 & 20 & 6 & 12 & \\
& 45 & 36 & 12 & 16 & No, Corollary \ref{coro:firstkilledsetparameters}\\\hline
\multirow[t]{4}{*}{57} & 24 & 11 & 3 & 9 & \\
& 38 & 27 & 10 & 19 & No, Theorem \ref{th:secondkilledsetparameters}\\
& 40 & 27 & 6 & 9 & \\
& 42 & 31 & 10 & 15 & \\\hline
58 & 30 & 11 & 2 & 4 & \\\hline
\multirow[t]{4}{*}{60} & 14 & 4 & 1 & 6 & \\
& 17 & 2 & 1 & 4 & \\
& 35 & 22 & 7 & 15 & \\
& 38 & 25 & 8 & 15 & \\\hline
\multirow[t]{8}{*}{63} & 14 & 5 & 1 & 7 & \\
& 30 & 13 & 3 & 7 & \\
& 32 & 16 & 4 & 9 & \\
& \multirow[t]{2}{*}{38} & 21 & 4 & 7 & \\
&& 22 & 5 & 9 & \\
& 42 & 30 & 11 & 21 & No, Theorem \ref{th:secondkilledsetparameters}\\
& 50 & 40 & 15 & 21 & No, Corollary \ref{coro:firstkilledsetparameters}\\
& 52 & 43 & 16 & 21 & No, Corollary \ref{coro:firstkilledsetparameters}\\\hline
\multirow[t]{11}{*}{64} & 18 & 2 & 1 & 4 & \\
& 21 & 8 & 2 & 8 & \\
& 28 & 12 & 3 & 8 & \\
& \multirow[t]{2}{*}{33} & 12 & 2 & 4 & \\
& & 20 & 6 & 16 & \\
& 35 & 18 & 4 & 8 & \\
& 36 & 20 & 5 & 10 & \\
& 42 & 26 & 5 & 8 & \\
& 45 & 32 & 10 & 16 & \\
& 48 & 36 & 11 & 16 & \\
& 49 & 36 & 6 & 8 & 
\end{tabular}
\end{minipage} 
\caption{Feasible parameters for strictly Neumaier graphs up to 64 vertices.}\label{table:parametersstrictlyNG}
\end{table}

In this section we first show a general counting result for co-edge-regular graphs, from which we immediately derive a new condition for Neumaier graphs.

\begin{lemma}\label{lem:coedge}
   If $\Gamma$ is a co-edge-regular graph with parameters $(v,k,\mu)$, then $k(k-1)-\mu(v-k-1)\geq0$. Moreover, if $k(k-1)-\mu(v-k-1)=0$, then $\Gamma$ is strongly regular. If $k(k-1)-\mu(v-k-1)=2$, then each vertex of $\Gamma$ is contained in a unique triangle.
\end{lemma}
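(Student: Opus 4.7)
The plan is to double-count, for an arbitrary fixed vertex $u$, the edges with one endpoint in $\G(u)$ and the other in $V(\G) \setminus (\G(u) \cup \{u\})$. Counting from the non-neighbours of $u$: every non-neighbour $w$ contributes $|\G(u) \cap \G(w)| = \mu$ edges by co-edge-regularity, giving a total of $\mu(v-k-1)$. Counting from the neighbours of $u$: each $x \in \G(u)$ has $k$ neighbours in total, one of which is $u$ and $a_x$ of which lie inside $\G(u)$, so $x$ contributes $k - 1 - a_x$ of the counted edges. Writing $e_u$ for the number of edges of the induced subgraph on $\G(u)$ and noting that $\sum_{x \in \G(u)} a_x = 2 e_u$, equating the two counts yields
\[
2 e_u \;=\; k(k-1) - \mu(v-k-1).
\]

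From this identity the three claims follow at once. First, the inequality $k(k-1) - \mu(v-k-1) \ge 0$ is immediate from $e_u \ge 0$. Second, in the equality case one has $e_u = 0$ for every $u$, meaning that every pair of adjacent vertices has zero common neighbours; hence $\G$ is edge-regular with $\lambda = 0$ and, combined with the co-edge-regularity hypothesis, strongly regular. Third, if $k(k-1) - \mu(v-k-1) = 2$, then $e_u = 1$ for every $u$, which says precisely that each vertex lies in a unique triangle, namely the one formed together with the unique edge in its neighbourhood.

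I do not expect a real obstacle here. The only point requiring a little care is the bookkeeping on the neighbour side: one must subtract both the edge $xu$ (accounting for the $-1$) and the $a_x$ edges that stay inside $\G(u)$, so that the contribution of $x$ is $k - 1 - a_x$ and not $k - a_x$. Everything else is a direct reading of the displayed identity.
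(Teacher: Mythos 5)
Your proof is correct and follows essentially the same route as the paper's: both count the $\mu(v-k-1)$ edges between $\Gamma(u)$ and the non-neighbours of $u$ against the $k(k-1)$ available endpoint slots in $\Gamma(u)$, and interpret the slack as twice the number of edges inside $\Gamma(u)$. Your version merely makes the identity $2e_u = k(k-1)-\mu(v-k-1)$ explicit, which the paper leaves implicit; all three conclusions are drawn in the same way.
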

\begin{proof} 
Recall that a co-edge-regular graph is not complete. Let $u$ be a vertex in $\Gamma$. Each of the $k$ neighbors of $u$ is adjacent to $k-1$ other vertices. There are $v-k-1$ vertices not adjacent to $u$, which all have exactly $\mu$ common neighbors with $u$. Then there are $\mu(v-k-1)$ edges between a vertex in $\G_{2}(u)$ and a vertex in $\Gamma_{1}(u)$. This number cannot exceed the number of available endpoints in $\Gamma_{1}(u)$, hence $k(k-1) - \mu(v-k-1) \ge 0$.
\par If $k(k-1)-\mu(v-k-1)=0$, then the subgraph induced on $\Gamma_{1}(u)$ is an empty graph, hence any $w\in\Gamma_{1}(u)$ has no common neighbors with $u$. Since $u$ was chosen arbitrarily, $\Gamma$ is strongly regular with parameters $(v,k,0,\mu)$. 
\par Finally, assume that $k(k-1)-\mu(v-k-1)=2$. Then two vertices in $\Gamma_{1}(u)$ are not the endpoints of an edge to $\Gamma_{2}(u)$, which means that the subgraph induced on $\Gamma(u)$ is $(k-2)\cdot K_1\cup K_2$, a graph consisting of a single edge and $k-2$ isolated vertices. Then $u$ is contained in exactly one triangle. As $u$ was arbitrary, this holds for any vertex of $\Gamma$.
\end{proof}

The complement $\overline{\G}$ of a co-egde-regular graph $\G$ is an edge-regular graph, and vice versa. So, if $\Gamma$ is an edge-regular graph with parameters $(v,k,\lambda)$, then $(v-k-1)(v-k-2)-k(v-2k+\lambda)\geq0$. In particular, observe that from Lemma \ref{lem:coedge} it follows that if $k(k-1)-\mu(v-k-1)=0$, then not only $\Gamma$ is strongly regular, but also $\overline{\Gamma}$ is strongly regular; the latter has parameters $(v,v-k-1,v-2-2k+\mu, v-2k)$.
\par Looking at the complement of an edge-regular graph, we can deduce the following result.

\begin{corollary}\label{coro:firstkilledsetparameters}
    There are no edge-regular graphs (and hence no Neumaier graphs) with parameter set $(v,k,\lambda)$ such that $(v-k-1)(v-k-2)-k(v-2k+\lambda)<0$. All edge-regular graphs (and thus also all Neumaier graphs) with parameter set $(v,k,\lambda)$ such that $(v-k-1)(v-k-2)-k(v-2k+\lambda)=0$ are strongly regular.
\end{corollary}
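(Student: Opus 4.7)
The plan is to deduce this corollary directly from Lemma~\ref{lem:coedge} by passing to the complementary graph, exactly as the paragraph immediately preceding the corollary indicates.

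First I would verify the parameters of the complement. Suppose $\Gamma$ is edge-regular with parameters $(v,k,\lambda)$. Then $\overline{\Gamma}$ has $v$ vertices and is $(v-k-1)$-regular. For two non-adjacent vertices $u,w$ in $\overline{\Gamma}$ (equivalently, adjacent in $\Gamma$), a short inclusion-exclusion inside $\Gamma$ gives $v-2-(2(k-1)-\lambda) = v-2k+\lambda$ common non-neighbours of $u,w$ in $\Gamma$, i.e. common neighbours in $\overline{\Gamma}$. Hence $\overline{\Gamma}$ is co-edge-regular with parameters $(v,\, v-k-1,\, v-2k+\lambda)$, provided it is not complete; but if $\overline{\Gamma}$ were complete, then $\Gamma$ would be edgeless, contradicting edge-regularity.

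Next I would apply Lemma~\ref{lem:coedge} to $\overline{\Gamma}$, with $k' := v-k-1$ and $\mu' := v-2k+\lambda$. The lemma asserts
\[
k'(k'-1) - \mu'(v-k'-1) \;\geq\; 0,
\]
which upon substitution becomes precisely
\[
(v-k-1)(v-k-2) - k(v-2k+\lambda) \;\geq\; 0.
\]
Hence an edge-regular graph violating the stated inequality cannot exist; since every Neumaier graph is in particular edge-regular, the same conclusion holds for Neumaier graphs.

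For the equality case, Lemma~\ref{lem:coedge} further states that if $k'(k'-1) - \mu'(v-k'-1) = 0$ then $\overline{\Gamma}$ is strongly regular. Complementation preserves strong regularity, so $\Gamma$ itself is strongly regular. This gives the second assertion of the corollary, again inherited by Neumaier graphs. There is no serious obstacle in this argument; the only point requiring a moment of care is the correct computation of the parameters of $\overline{\Gamma}$, which must be done before Lemma~\ref{lem:coedge} can be invoked.
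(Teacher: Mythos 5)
Your proposal is correct and follows exactly the paper's route: the paper likewise derives the corollary by passing to the complement, noting that $\overline{\Gamma}$ is co-edge-regular with parameters $(v,\,v-k-1,\,v-2k+\lambda)$, and then applying Lemma~\ref{lem:coedge}. Your explicit verification of the complement's parameters and of the non-completeness hypothesis is a welcome spelling-out of details the paper leaves implicit.
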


Corollary \ref{coro:firstkilledsetparameters} allows to reduce the number of admissible parameter sets. Actually, it also follows from the proof of Lemma \ref{lem:coedge} that $k(k-1)-\mu(v-k-1)$ is even, but this is not useful further on to reduce the number of admissible parameter sets since we already know that $vk$ and $k\lambda$ are both even for edge-regular graphs with parameters $(v,k,\lambda)$.

\begin{remark}
    It follows from Corollary \ref{coro:firstkilledsetparameters} that several parameter sets that were admissible as parameter sets of strictly Neumaier graphs by Theorems \ref{th:ERG}, \ref{th:existenceconditions} and \ref{th:existenceconditionsstrict} are now showed not to be admissible as such. In particular, there are 14 parameter sets with $v\leq 100$ that are now showed not to be parameter sets of Neumaier graphs: twelve of them have $(v-k-1)(v-k-2)-k(v-2k+\lambda)<0$, and $(56, 45, 36, 12, 16)$ and $(77, 60, 47, 15, 21)$ can only correspond to strongly regular Neumaier graphs. Note that the former parameter set corresponds to the complement of a $(56,10,0,2)$ strongly regular graph, and the latter to the complement of a $(77,16,0,4)$ strongly regular graph. The Sims-Gewirtz graph and the Mesner-M22 graph are the unique strongly regular graphs with these parameters, respectively, see \cite{BMbook}. The complements of the Sims-Gewirtz and the Mesner-M22 graph admit a 12-regular clique of size 16, and a 15-regular clique of size 21, respectively, so are indeed Neumaier.
    \par We show the strength of Corollary \ref{coro:firstkilledsetparameters} by giving several infinite families of parameter sets that are admissible by Theorems \ref{th:ERG}, \ref{th:existenceconditions} and \ref{th:existenceconditionsstrict}, but which do not meet the conditions of Corollary \ref{coro:firstkilledsetparameters}. The parameter sets
    \[
        \left(3\frac{a^{n+1}-1}{a-1},2a^n+a\frac{a^{n}-1}{a-1},3a^{n}-2a^{n-1}+a\frac{a^{n-1}-1}{a-1}-1;a^{n},\frac{a^{n+1}-1}{a-1}\right)
    \]
    with integers $a,n\geq2$, fulfill all conditions of Theorems \ref{th:ERG}, \ref{th:existenceconditions} and \ref{th:existenceconditionsstrict}, but
    \[
        (v-k-1)(v-k-2)-k(v-2k+\lambda)=-2\frac{(a-2)a^{n}(a^{n-1}-2)-1}{a-1}\,
    \]
    is negative if $a\geq3$. So, in case $a\geq3$ there are no Neumaier graphs with these parameters by Corollary \ref{coro:firstkilledsetparameters}. In case $a=2$, then all Neumaier graphs with these parameters are strongly regular. Likewise, the parameter sets $(27a+21,21a+14,13a+7;6a+4,9a+7)$, with $a\geq0$ an integer, fulfill the conditions of Theorems \ref{th:ERG}, \ref{th:existenceconditions} and \ref{th:existenceconditionsstrict}, but
    \[
        (v-k-1)(v-k-2)-k(v-2k+\lambda)=-2(a+1)(3a-1)\,
    \]
    is negative if $a\geq1$. So, in case $a\geq1$ there are no Neumaier graphs with these parameters by Corollary \ref{coro:firstkilledsetparameters}.
    \par For the parameter sets
    \begin{align*}
        &(a^{2}(2a+3),(a+1)(4a^{2}-1),4a^3+2a^2+a-2;4a^2-2a,4a^2)\quad\text{ and}\\
        &(2(2a+1)(a^2+a-1),2(a+1)(2a^{2}-1),4a^3+2a^2+a-3;4a^2-2a,4a^2-1)
    \end{align*}
    with $a\geq2$ an integer, all conditions from Theorems \ref{th:ERG}, \ref{th:existenceconditions} and \ref{th:existenceconditionsstrict} are fulfilled, but we have $(v-k-1)(v-k-2)-k(v-2k+\lambda)=0$. So any Neumaier graph with these parameters must be strongly regular.
\end{remark}

The next result shows the nonexistence of certain strictly Neumaier graphs with $(v-k-1)(v-k-2)-k(v-2k+\lambda)=2$. Note again that this parameter set fulfills all conditions from Theorems \ref{th:ERG}, \ref{th:existenceconditions} and \ref{th:existenceconditionsstrict}

{
\begin{theorem}\label{th:secondkilledsetparameters}
    There is no Neumaier graph with parameter set $(6l+3,4l+2,3l;l+1,2l+1)$ for any integer $l\geq 3$.
\end{theorem}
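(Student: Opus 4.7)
The plan is to pass to the complement $\overline{\Gamma}$, which is co-edge-regular with parameters $(v, v-k-1, v-2k+\lambda) = (6l+3, 2l, l-1)$, and to exploit Lemma~\ref{lem:coedge}. A direct computation gives
\[
    2l(2l-1) - (l-1)(4l+2) = 2,
\]
so by Lemma~\ref{lem:coedge} every vertex of $\overline{\Gamma}$ must lie in a unique triangle. Moreover, the $(l+1)$-regular clique of size $2l+1$ in $\Gamma$ translates into an $l$-regular coclique $C$ of size $2l+1$ in $\overline{\Gamma}$, since an outside vertex keeps $s-e = l$ of its edges to $C$ upon complementation. This is the combinatorial setup I will work in.

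Next I would label $C = \{x_1, \ldots, x_{2l+1}\}$ and write $\{x_i, y_i, z_i\}$ for the unique triangle through $x_i$, then zoom in on the neighborhood of $y_1$. By the $l$-regularity of $C$ the vertex $y_1$ has $l$ neighbors in $C$, and since its degree in $\overline{\Gamma}$ is $2l$ it also has $l$ neighbors outside $C$. The unique-triangle property prohibits $y_1$ from having two neighbors in any triple $\{x_i, y_i, z_i\}$, so after relabeling I may take the out-of-$C$ neighbors of $y_1$ to be $z_1, y_2, \ldots, y_l$. The same principle rules out $y_1 \sim x_i$ for $i \in \{2, \ldots, l\}$, since this would produce a second triangle $\{y_1, x_i, y_i\}$; after a further relabeling, the in-$C$ neighbors of $y_1$ are therefore $x_1, x_{l+1}, \ldots, x_{2l-1}$.

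The contradiction will come from $y_2$ and $y_3$ (both of which exist because $l \geq 3$). For any $j \in \{2, \ldots, l\}$, if $y_j$ were adjacent to some $x_i \in \{x_1, x_{l+1}, \ldots, x_{2l-1}\}$, then $\{y_1, y_j, x_i\}$ would be a second triangle through $y_1$; hence the $l$ in-$C$ neighbors of each of $y_2, y_3$ must lie in the $(l+1)$-element set $\{x_2, \ldots, x_l, x_{2l}, x_{2l+1}\}$. Inclusion--exclusion then forces $y_2$ and $y_3$ to share at least $l-1$ common neighbors inside $C$, and $y_1$ itself is a further common neighbor outside $C$. Since $y_2 \not\sim y_3$ (otherwise $\{y_1, y_2, y_3\}$ would be yet another triangle through $y_1$), the $\mu = l-1$ co-edge-regularity of $\overline{\Gamma}$ is violated, which is the sought contradiction. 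The main obstacle is verifying that the repeated ``no second triangle through $y_1$'' arguments indeed squeeze the in-$C$ neighborhoods of $y_2$ and $y_3$ into the right $(l+1)$-element pool; once that bookkeeping is secured, the inclusion--exclusion count is immediate.
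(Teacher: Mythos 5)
Your proposal is correct and follows essentially the same route as the paper's proof: pass to the complement, invoke Lemma~\ref{lem:coedge} to get the unique-triangle property from $2l(2l-1)-(l-1)(4l+2)=2$, and then derive the contradiction by squeezing the in-$C$ neighborhoods of $y_2$ and $y_3$ into the $(l+1)$-element set $\{x_2,\dots,x_l,x_{2l},x_{2l+1}\}$. All the key steps, including the observation that $y_2\not\sim y_3$, match the paper's argument.
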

\begin{proof}
Suppose that $\Gamma$ is a Neumaier graph with parameters $(6l+3,4l+2,3l;l+1,2l+1)$ for some integer $l\geq 3$. Its complement $\overline{\Gamma}$ is a co-edge-regular graph with parameters $(6l+3,2l,l-1)$. By Lemma \ref{lem:coedge} we know that each vertex of $\overline{\Gamma}$ is in a unique triangle.
\par We also know that $\overline{\Gamma}$ has an $l$-regular coclique $C$ of order $2l+1$, arising from an $(l+1)$-regular clique in $\Gamma$. Let $C = \set{x_1,\dots,x_{2l+1}}$ and let $\set{x_i,y_i,z_i}$ denote the triangle containing $x_i$. Without loss of generality, $z_1, y_2, \dots, y_{l}$ are the neighbors of $y_1$ that are not in $C$; here we used that $y_{1}$ has at most one neighbor in each triangle. Note that $y_i$ and $y_j$ cannot be neighbors for any $i,j \in \{2,\dots,l\}$, as $y_1$ is in only one triangle, namely $\set{x_1,y_1,z_1}$. Furthermore, we can assume that $x_1,x_{l+1},\dots x_{2l-1}$ are the neighbors of $y_1$ in $C$ (observe that $y_1\not\sim x_i$ for $i \in \set{2,\dots,l}$, because this would create a triangle $\set{x_i,y_i,y_1}$). Then,  for any $j\in\{2,\dots,l\}$, the vertex $y_j$ is not adjacent to any $x_i \in \{x_1\}\cup\{x_{l+1},\dots,x_{2l-1}\}$, since this would induce a triangle $\set{y_{j}, x_i, y_1}$. We know that $l\geq3$. Now, by the $l$-regularity of $C$, $y_2$ and $y_3$ each have $l$ neighbors in $\set{x_2, \dots, x_{l}}\cup \set{x_{2l},x_{2l+1}}$. This means that they have at least $l-1$ common neighbors in this set, contradicting the $(l-1)$-co-edge-regularity of $\overline{\Gamma}$, since $y_1$ is also a common neighbor of $y_2$ and $y_3$.
\end{proof}
}
As a consequence of Theorem \ref{th:secondkilledsetparameters}, we can settle down several open cases of existence of strictly Neumaier graphs, see~\cite[Table 2]{evans2018smallest}. The updated list of feasible parameters for strictly Neumaier graphs up to 64 vertices is shown in Table \ref{table:parametersstrictlyNG}.

\section{A new family of strictly Neumaier graphs}\label{sec:newfamily}

In \cite{greaves2019another} Greaves and Koolen described a construction of strictly Neumaier graphs arising from antipodal distance-regular graphs with diameter 3. It was later generalised by Evans in his PhD thesis, see \cite[Theorem 5.1]{evansphd}; this generalisation also appeared in \cite{egkm}. Next we will describe the construction from \cite{evansphd}, for later use. A \emph{spread} of (the vertex set of) a graph is a partition of the vertex set in subsets, i.e.~a family of pairwise disjoint subsets of the vertex set whose union is the whole vertex set. 

\begin{definition}
    Let $\Gamma_{1}=(V_{1},E_{1}),\dots,\Gamma_{t}=(V_{t},E_{t})$ be $t$ graphs such that for any $i=1,\dots,t$ the graph $\Gamma_{i}$ admits a spread of 1-regular cocliques, denoted by $C_{i,1},\dots,C_{i,a}$. Let $\pi_{1}=\id,\pi_{2},\dots,\pi_{t}$ be $t$ permutations in $\Sym_{a}$. The graph $F_{(\pi_{2},\dots,\pi_{t})}(\Gamma_{1},\dots,\Gamma_{t})$ is the graph that has vertex set $V_{1}\cup\dots\cup V_{t}$ and where two vertices $x\in C_{i,k}$ and $y\in C_{j,l}$ are adjacent if and only if $i=j$ and $x\sim y$ in $\Gamma_{i}$, or if $\pi^{-1}_{i}(k)=\pi^{-1}_{j}(l)$. In particular, $\Gamma_{1},\dots,\Gamma_{t}$ could be $t$ copies of the same edge-regular graph $\Gamma$. In this case we denote $F_{(\pi_{2},\dots,\pi_{t})}(\Gamma_{1},\dots,\Gamma_{t})$ by $F_{(\pi_{2},\dots,\pi_{t})}(\Gamma)$.
\end{definition}

In other words, in the previous construction we take the graphs $\Gamma_{1},\dots,\Gamma_{t}$ and for any $k$ we add the edges between all vertices in $C_{1,k}\cup C_{2,\pi_{2}(k)}\cup\dots\cup C_{t,\pi_{t}(k)}$. In \cite[Theorem 5.1]{evansphd} the author describes the 1-regular cocliques as perfect 1-codes, but they are just equivalent. Also, in the above construction we actually could do without the permutations $\pi_{1}=\id,\pi_{2},\dots,\pi_{t}$, as we could change the order on the cocliques in each of the graphs. We do however want to point out that we can obtain several not necessarily isomorphic (actually almost always non-isomorphic) graphs starting from the same set of edge-regular graphs.
\par The following result is essential to the rest of the paper.


\begin{theorem}[{\cite[Theorem 5.1]{evansphd}}]\label{th:mainconstruction}
    Let $\Gamma_{1}=(V_{1},E_{1}),\dots,\Gamma_
    {t}=(V_{t},E_{t})$ be $t$ edge-regular graphs with parameters $(v,k,\lambda)$ such that for any $i=1,\dots,t$ the graph $\Gamma_{i}$ admits a spread of 1-regular cocliques, $C_{i,1},\dots,C_{i,k+1}$. Let $\pi_{1}=\id,\pi_{2},\dots,\pi_{t}$ be $t$ permutations in $\Sym_{k+1}$. If $t=\frac{(\lambda+2)(k+1)}{v}$, then $F_{(\pi_{2},\dots,\pi_{t})}(\Gamma_{1},\dots,\Gamma_{t})$ is a Neumaier graph with parameters $(vt,k+\lambda+1,\lambda;1,\lambda+2)$, which admits a spread of $1$-regular cliques.
\end{theorem}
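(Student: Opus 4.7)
The plan is to verify that $F := F_{(\pi_2,\dots,\pi_t)}(\Gamma_1,\dots,\Gamma_t)$ is edge-regular with the stated parameters and then to exhibit a spread of $1$-regular cliques of size $\lambda+2$. Before anything else I will establish the fundamental size identity. Counting edges between a $1$-regular coclique $C_{i,j}$ and its complement in the $k$-regular graph $\Gamma_i$ gives $k\,|C_{i,j}|=v-|C_{i,j}|$, so $|C_{i,j}|=v/(k+1)$. Combined with the hypothesis $t=(\lambda+2)(k+1)/v$, this yields the key identity $t\cdot v/(k+1)=\lambda+2$, which I will use throughout.

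Next I will pinpoint the cliques. For each $m\in\{1,\dots,k+1\}$ set
\[
K_m \;=\; C_{1,m}\cup C_{2,\pi_2(m)}\cup\dots\cup C_{t,\pi_t(m)}.
\]
Two vertices in the same part $C_{i,\pi_i(m)}$ are adjacent in $F$ because the condition $\pi_i^{-1}(k)=\pi_j^{-1}(l)$ holds trivially, and two vertices in distinct parts are adjacent because $\pi_i^{-1}(\pi_i(m))=m=\pi_j^{-1}(\pi_j(m))$. Hence $K_m$ is a clique in $F$ of size $t\cdot v/(k+1)=\lambda+2$, and since each $\pi_i$ is a permutation the family $\{K_m\}_m$ partitions $V(F)$, giving the requested spread once $e=1$ is verified.

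A direct count gives the degree of any vertex $x\in C_{i,j}$: it keeps its $k$ neighbors in $\Gamma_i$, gains $v/(k+1)-1$ new neighbors inside $C_{i,j}$, and gains a full matched coclique of $v/(k+1)$ neighbors in each $\Gamma_h$ with $h\neq i$, for a total of $k+t\cdot v/(k+1)-1=k+\lambda+1$. The heart of the argument is edge-regularity, which I will split into three cases according to why $x\sim y$ in $F$: (i) $x,y$ lie in a single coclique $C_{i,j}$; (ii) $x,y$ lie in different cocliques of the same $\Gamma_i$ and $x\sim y$ in $\Gamma_i$; (iii) $x\in C_{i,k}$, $y\in C_{j,l}$ with $i\neq j$ and $\pi_i^{-1}(k)=\pi_j^{-1}(l)$. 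In each case I will enumerate common neighbors by first partitioning $V(F)$ according to the host graph $\Gamma_h$ and then according to the coclique inside $\Gamma_h$. The engine making everything telescope to $\lambda$ is the $1$-regularity of cocliques: any vertex outside $C_{i,j}$ has exactly one $\Gamma_i$-neighbor inside $C_{i,j}$. In case (i) this forces two vertices of $C_{i,j}$ to share no $\Gamma_i$-neighbor at all; in case (ii) it ensures that none of the $\lambda$ common $\Gamma_i$-neighbors of $x$ and $y$ lies in $C_{i,k}\cup C_{i,l}$; and in every case the identity $t\cdot v/(k+1)=\lambda+2$ makes the cross-graph contributions sum exactly to the required deficit.

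Finally, the $1$-regularity of each $K_m$ follows from the same principle: a vertex $z\in C_{h,\pi_h(m')}$ with $m'\neq m$ has exactly one $\Gamma_h$-neighbor in $C_{h,\pi_h(m)}$ and no neighbor at all in $C_{i,\pi_i(m)}$ for $i\neq h$, since the cross-adjacency condition would force $\pi_h^{-1}(\pi_h(m'))=\pi_i^{-1}(\pi_i(m))$, i.e.\ $m'=m$. The main obstacle I expect is the bookkeeping in the three-case analysis for edge-regularity, especially case (ii), where I must isolate the $\lambda$ common $\Gamma_i$-neighbors of $x$ and $y$ from any potential interference inside $C_{i,k}$ and $C_{i,l}$; this is where the $1$-regular-coclique property is used most subtly.
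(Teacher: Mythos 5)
Your proof is correct, and it follows the same standard argument as the source the paper cites (the paper states this theorem as \cite[Theorem 5.1]{evansphd} without reproving it): identify the cliques $K_m$ of size $t\cdot v/(k+1)=\lambda+2$, count degrees, and use the $1$-regularity of the cocliques to show that the added clique edges contribute no extra common neighbours in each of the three adjacency cases. All the key points are in place --- including the preliminary identity $|C_{i,j}|=v/(k+1)$ and the $1$-regularity of the $K_m$ --- so the only things left implicit are routine (writing out the three case counts in full and noting that $F$ is non-complete, which follows since $k+\lambda+1<vt-1$).
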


\begin{remark}
    Note that in the construction from Theorem \ref{th:mainconstruction} the number of cocliques is precisely one more than the regularity parameter $k$, since each vertex has precisely one neighbour in each of the cocliques of the spread, and no neighbour in its own coclique. This was not pointed out in \cite[Theorem 5.1]{evansphd}, where the regularity and the number of cocliques were two independent parameters.
\end{remark}

\begin{remark}
    The construction from Theorem \ref{th:mainconstruction} always produces a Neumaier graph with $e=1$ since it requires a spread of 1-regular cocliques in each of the graphs. There is no straightforward generalisation of this construction for $e>1$, starting from $e$-regular cocliques, since two (adjacent) vertices in $C_{i,k}$ would not have the same number of common neighbours as two (adjacent) vertices, one in $C_{i,k}$ and one in $C_{j,k}$, $i\neq j$, violating the edge-regularity. Here we used the notation from Theorem \ref{th:mainconstruction}.
\end{remark}

The next theorem gives checks when the construction from Theorem \ref{th:mainconstruction} produces strictly Neumaier graphs. The first case was recently also described in \cite[Theorem 1]{egkm}, independently from this paper.

\begin{theorem}\label{th:mainconstructionstrictly}
    Let $\Gamma_{1}=(V_{1},E_{1}),\dots,\Gamma_
    {t}=(V_{t},E_{t})$ be $t$ edge-regular graphs with parameters $(v,k,\lambda)$ such that $vt=(\lambda+2)(k+1)$ and such that for any $i=1,\dots,t$ the graph $\Gamma_{i}$ admits a spread of 1-regular cocliques, $C_{i,1},\dots,C_{i,k+1}$. Let $\pi_{1}=\id,\pi_{2},\dots,\pi_{t}$ be $t$ permutations in $\Sym_{k+1}$. If
    \begin{itemize}
        \item $t\geq2$ and the $\Gamma_{i}$'s are not complete, or
        \item $t=1$ and there are two vertices in $\Gamma_{1}$ that are at distance at least 3 and not in the same $C_{1,j}$, 
    \end{itemize}
    then the graph $F_{(\pi_{2},\dots,\pi_{t})}(\Gamma_{1},\dots,\Gamma_{t})$ is a strictly Neumaier graph.
\end{theorem}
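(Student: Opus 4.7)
My plan is to show that $F=F_{(\pi_2,\dots,\pi_t)}(\Gamma_1,\dots,\Gamma_t)$ is not strongly regular; combined with Theorem~\ref{th:mainconstruction} this yields the claim. I will exhibit two non-adjacent pairs of $F$ with different numbers of common neighbours.

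First I classify the non-adjacent pairs of $F$. A pair is non-adjacent precisely when it falls in one of the following types: (A) both endpoints lie in the same $\Gamma_i$, are non-adjacent there, and lie in distinct cocliques $C_{i,\alpha}, C_{i,\beta}$ of the spread; (B) the endpoints lie in distinct $\Gamma_i,\Gamma_j$ in cocliques $C_{i,\alpha}, C_{j,\beta}$ with $\pi_i^{-1}(\alpha)\ne\pi_j^{-1}(\beta)$. The key computation is to count common neighbours by summing over each $\Gamma_l$ and systematically invoking $1$-regularity of the cocliques. For a type (B) pair I get exactly one common neighbour in each of $\Gamma_i$ and $\Gamma_j$ (the unique $\Gamma_i$-neighbour of $x$ lying in the cross-coclique of $y$, and symmetrically), and zero in every further $\Gamma_l$ (the cross-cocliques of $x$ and $y$ in $\Gamma_l$ are distinct and hence disjoint), for a total of $2$. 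For a type (A) pair the cross-contribution is again zero, while in $\Gamma_i$ the common neighbours decompose as the common $\Gamma_i$-neighbours of $x$ and $y$, the unique $\Gamma_i$-neighbour of $y$ inside $C_{i,\alpha}$, and the unique $\Gamma_i$-neighbour of $x$ inside $C_{i,\beta}$; this yields $\mu_{\Gamma_i}(x,y)+2$.

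Consequently, non-strong-regularity of $F$ reduces to exhibiting a type (A) pair with $\mu_{\Gamma_i}(x,y)>0$. When $t\ge 2$ with the $\Gamma_i$'s non-complete, I choose any vertex $u\in\Gamma_i$ and any $\Gamma_i$-neighbour $z$ of $u$: by $1$-regularity $z$ has one neighbour in each of the $k$ cocliques other than its own, and since $\Gamma_i$ is not complete, not all of these neighbours of $z$ can also be adjacent to $u$, so some such neighbour $w$ lies in a coclique different from $u$'s and satisfies $w\not\sim_{\Gamma_i}u$; the pair $(u,w)$ is then a type (A) pair with $z$ as a common $\Gamma_i$-neighbour. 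When $t=1$, the hypothesised distance-$\ge 3$ pair in different cocliques is a type (A) pair of count $2$ in $F$, while a distance-$2$ pair in different cocliques (whose existence I will extract from the diameter-$\ge 3$ of $\Gamma_1$ combined with the spread structure) is a type (A) pair of count at least $3$; comparing the two rules out strong regularity.

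The main obstacle I foresee is to rigorously produce the distinguishing type (A) pair in every allowable configuration, most delicately in the $t=1$ case where one must argue that the distance-$\ge 3$ hypothesis also entails the existence of a distance-$2$ pair lying in different cocliques of the spread.
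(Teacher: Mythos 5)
Your proposal follows the same strategy as the paper's proof --- exhibit two non-adjacent pairs of $F$ with different numbers of common neighbours --- but organises it more systematically: you compute the common-neighbour count of \emph{every} non-adjacent pair ($\mu_{\Gamma_i}(x,y)+2$ for a within-copy pair in distinct cocliques, exactly $2$ for a cross-copy pair), whereas the paper only produces the two specific pairs it needs. Both counting claims are correct, and the problem does reduce, as you say, to finding one within-copy non-adjacent pair whose endpoints have a common neighbour in $\Gamma_i$.

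Two remarks on the steps you leave open. First, the obstacle you single out in the $t=1$ case has a one-line resolution, which is exactly what the paper uses: \emph{any} two vertices at distance $2$ in $\Gamma_1$ automatically lie in different cocliques of the spread, because their common neighbour can have at most one neighbour in each $1$-regular coclique. So you only need a distance-$2$ pair to exist, which you obtain by walking along a geodesic from the hypothesised distance-$\geq 3$ pair (provided that distance is finite). Second, your justification in the $t\geq2$ case --- ``since $\Gamma_i$ is not complete, not all of $z$'s other neighbours can be adjacent to $u$'' --- is precisely the assertion $\lambda<k-1$, and non-completeness alone does not give it: an edge-regular graph with $\lambda=k-1$ is a disjoint union of complete graphs $K_{k+1}$, which can be non-complete and can carry a spread of $1$-regular cocliques (one vertex per component in each coclique). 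The paper's own proof makes the identical leap (``two vertices at distance two exist since $\Gamma_1$ is not complete''), so you are no worse off than the authors; but strictly speaking both arguments need the implicit extra hypothesis that some $\Gamma_i$ contains a pair at distance exactly $2$ (e.g.\ connectivity), and this is not cosmetic: for $\Gamma_1=\Gamma_2=2K_4$ with $t=2$ all the hypotheses as literally stated are met, yet every non-adjacent pair of $F$ then has exactly $2$ common neighbours by your own formula, and $F$ is the $4\times4$ rook's graph, which is strongly regular.
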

\begin{proof}
    We denote $F_{(\pi_{2},\dots,\pi_{t})}(\Gamma_{1},\dots,\Gamma_{t})$ by $\Gamma$. Note that if $\Gamma_{1}$ is complete, then $v=k+1=\lambda+2$, hence $t=v\geq 2$. So, in each of the two cases above we know that $\Gamma_{1}$ is not complete.
    \par Let $u_{1},w_{1}\in V_{1}$ be two vertices that are at distance two in $\Gamma_{1}$; these exist since $\Gamma_{1}$ is not complete. Then there is a vertex $x$ such that $u_{1}\sim x\sim w_{1}$. Since $x$ cannot have two neighbours in the same coclique of $\Gamma_{1}$, we find that $u_{1}$ and $w_{1}$ belong to different cocliques, say $u_{1}\in C_{1,1}$ and $w_{1}\in C_{1,2}$. In $\Gamma_{1}$ there is precisely one vertex $w'_{1}\in \Gamma_{1}(u_{1})\cap C_{1,2}$ and precisely precisely one vertex $u'_{1}\in \Gamma_{1}(w_{1})\cap C_{1,1}$ by the 1-regularity of the cocliques. Obviously $u'_{1}\neq x\neq w'_{1}$. So, in $\Gamma$ the vertices $u_{1},w_{1}$ have at least three common neighbours.
    \par If $t\geq2$, we can find a vertex $w_{2}\in C_{2,2}$. From the construction it follows immediately that in $\Gamma$ the vertices $u_{1}$ and $w_{2}$ have precisely two common neighbours, one in $C_{1,2}$ and one in $C_{2,1}$. So as $\{u_{1},w_{1}\}$ and $\{u_{1},w_{2}\}$ have a different number of common neighbours, $\Gamma$ is not co-edge-regular, so not strongly regular, and thus a strictly Neumaier graph.
    \par Now we consider the case with $t=1$. 
    Assume now there are vertices $y$ and $y'$ in $\Gamma_{1}$ such that $d(y,y')\geq3$ and $y$ and $y'$ are in different cocliques of $\Gamma_{1}$, say $y\in C_{1,m}$ and $y'\in C_{1,m'}$. In $\Gamma$ the vertex $y$ has a unique neighbour $z\in C_{1,m'}$, and $y'$ has a unique neighbour $z'\in C_{1,m}$. So, the vertices $z$ and $z'$ are common neighbours of $y$ and $y'$ in $\Gamma$. Any other common neighbour of $y$ and $y'$ in $\Gamma$ cannot be in $C_{1,m}\cup C_{1,m'}$ by construction, so must be a common neighbour of $y$ and $y'$ in $\Gamma_{1}$. But such a vertex cannot exist since $d(y,y')\geq3$. It follows that $y$ and $y'$ have precisely two common neighbours in $\Gamma$. But we know from the beginning of the proof that there are two vertices in $\Gamma$ that have precisely three common neighbours. So, the graph $\Gamma$ cannot be strongly regular, so is a strictly Neumaier graph.
\end{proof}


\comments{
Let $G=(V,E)$ be a graph with a spread of perfect $1$-error-correcting codes $C_1,\dots,C_{\frac{|V|}{a}}$ of size $a$. For a given constant $t$, take $t$ copies $G^{(1)},\dots,G^{(t)}$ of $G$ and let $C_i^{(j)}$ denote code $C_i$ in the $j^{\text{th}}$ copy. Let $G_t$ be the graph $\cup_{i\in [t]} G^{(i)}$ and with the additional edges
\begin{equation} \left\{(u,v) : u, v\in \cup_{j\in[t]} C_i^{(j)} \right\}\end{equation}
for $i\in [|V|/a]$.

\begin{theorem}\label{thm:mostlyevans}
The graph $G_{\frac{\lambda+2}{a}}$ is strictly Neumaier, i.e.
\begin{enumerate}
  \item[(i)] $G_{\frac{\lambda+2}{a}}$ contains a spread of 1-regular cliques of size $\lambda+2$,
  \item[(ii)] $G_{\frac{\lambda+2}{a}}$ is $(V(G)(\lambda + 2)/a, k + \lambda + 1, \lambda)$-edge-regular,
  \item[(iii)] $G_{\frac{\lambda+2}{a}}$ is not strongly regular.
\end{enumerate}
\end{theorem}

\begin{proof}Let $G' := G_{\frac{\lambda+2}{a}}$.
\begin{enumerate}
  \item[(i)] It is clear from the construction that each set $S_i := \left\{C_i^{(j)}: j \in [(\lambda+2)/a]\right\}$ forms a clique in $G'$ of size $\frac{\lambda+2}{a} \cdot a = \lambda + 2$. Since the sets $C_i^{(j)}$ partition $G'$, these cliques form a spread. Each vertex not contained in a clique $S_i$ is adjacent to exactly one vertex of $C_i$ in its respective copy of $G$, hence $\{S_i\}$ forms a spread of 1-regular cliques.
  \item[(ii)] The number of vertices follows directly from the construction. Each clique added to $G'$ has size $\lambda+2$, so the degree of all vertices increases by $\lambda+1$, making $G'$ $k+\lambda+1$-regular. Suppose that $u\sim v$. If $u$ and $v$ are in the same clique $C$, they have $\lambda$ common neighbors in $C$. No other vertex can be adjacent to both $u$ and $v$, as this would violate the 1-regularity of $C$. If $u$ and $v$ are not in the same clique, they must be in the same copy of $G$, else they would not be adjacent. In that case, their number of common neighbors is the same as in $G$, so it is $\lambda$.
  \item[(iii)] The graph $G$ is not strongly regular, so in particular it cannot be the case that every pair of nonadjacent vertices has zero common neighbors. Let $u \not\sim v$ such that $u$ and $v$ have $\mu > 0$ neighbors in common and consider the corresponding vertices $u_i$ and $v_i$ in the copy $G^{(i)}$. By Item (i), $u_i$ has exactly one neighbor in the clique containing $v_i$. The same holds for $v_i$, hence $u_i$ and $v_i$ have $\mu+2$ common neighbors in $G'$. On the other hand, let $w\in G^{(j)}\neq G^{(i)}$ such that $w\not\sim u_i$. Then $w$ and $u_i$ do not share a clique, so $w$ has one neighbor in the clique of $u_i$ and vice versa. As $i\neq j$, these are the only two common neighbors, which means that $G'$ is not strongly regular.
\end{enumerate}
\end{proof}

Observe that the graph $G_{\frac{\lambda+2}{a}}$ from Theorem \ref{thm:mostlyevans} is not uniquely defined, as is shown by the $4$ examples on $24$ vertices found in \cite{evans2018smallest}.

Note also that the above result can be extended as follows, such that Evans's extension \cite{evansphd} of \cite{greaves2019another} and our previous result follow as a corollary.
Let $G$ be a $NG(v, k, a, s, 1)$ with a spread of regular cliques $\{C_1, \ldots,C_t\}$.
Construct $G'$ from $G$ by removing the edges inside each of the $C_i$'s.
Then each component of $G'$ is edge-regular and has a spread of perfect codes.
Let $G_i$ be edge-regular graph with parameters $(v_i, k, a)$
having  a spread of perfect codes $C^{(i)}_1$, $C^{(i)}_2, ..., C^{(i)}_t$, say of cardinality $u_i$, for $i =1,\ldots, p$.
Assume further $\sum_i u_i = a+2$.
Construct the graph $G$ with vertex set $V(G_1) \cup V(G_2) \cup ... \cup V(G_p)$ and edge set $E(G_1) \cup E(G_2) \cup \cdots \cup E(G_p) \cup \bigcup_j E_j$ where 
$E_j$ is the set $\{xy \mid x, y \in \C^{(i)}_j, x\neq y, i =1,2,...,p\}$ for $j =1, 2, ..., t$.

\textcolor{red}{can we show that the above construction does not work for $e>1$? I am not sure about this yet, but let us try to see if the generalization below of Neumaier graphs with spreads could make sense.}

\textcolor{blue}{
Let $G$ be a graph such that:
\begin{description}
\item[$(i)$] is edge-regular graphs with parameters $(v,k,\lambda)$;
\item[$(ii)$] has a spread of regular cocliques $\{C_1,\ldots,C_t\}$ with parameters $(e,s)$;
\item[$(iii)$] two vertices $x,y$ of $C_i$ have exactly $c$ common neighbors.
\end{description}
Then, if we add the edges in the spread of cocliques in $G$ and if $c+e-2=a+2(e-1)$, we obtain a Neumaier graph with parameters $(v,k+s-1,a+2(e-1);s,e)$.
}
}

Given Theorems \ref{th:mainconstruction} and \ref{th:mainconstructionstrictly} it is essential to find (families of) edge-regular graphs with a spread of $1$-regular cocliques. Essentially all known constructions of strictly Neumaier graphs with $e=1$ arise from this construction. In \cite{greaves2019another} the authors use $a$-antipodal distance-regular graphs of diameter 3; examples of these include the Taylor graphs, the Thas-Somma graphs, and the graphs constructed by Brouwer, Hensel and Mathon.
\par In \cite{evansphd} Evans describes some particular applications of this Theorem \ref{th:mainconstruction}, including the construction of a strictly Neumaier graph on 40 vertices and one on 78 vertices. In \cite{greaves2018edge} Greaves and Koolen constructed a family of strictly Neumaier graphs as Cayley graphs on the group $\Z/l\Z\times(\Z/2\Z)^{m}\times(\F_{q},+)$, with $m\in\{2,3\}$. It can however be seen that the restricted Cayley graph on $(\Z/2\Z)^{m}\times(\F_{q},+)$ produces an edge-regular graph that admits a spread of 1-regular cocliques, and that the graphs described in \cite{greaves2018edge} appear through an application of Theorem \ref{th:mainconstruction} (the factor $\Z/{l}\Z$ produces $l$ copies of this graph, all with the same ordering on the cocliques).

\medskip\par We will now describe a new construction of edge-regular graphs having a spread of 1-regular cocliques. 

\begin{definition}
    Let $n$ be an integer and $a\in(\Z/n\Z)^{*}$ such that $a^{i}\equiv-1\pmod{n}$, where $2i$ is the order of $a$ in $(\Z/n\Z)^{*},\cdot$. Then $S_{n}(a)$ is the set $\{a^{j}\in\Z/n\Z\mid 0\leq j<2i\}$ and $\Gamma_{n}(a)$ is the Cayley graph on $\Z/n\Z,+$ with $S_{n}(a)$ as generating set.
\end{definition}

\begin{theorem}\label{th:twoprimes}
    Let $p$ be an odd prime and let $q$ be an odd integer. If $a\in(\Z/pq\Z)^*$ is such that $a\pmod{p}$ is a generator of $(\Z/p\Z)^*,\cdot$ and such that $a^{\frac{p-1}{2}}\equiv-1\pmod{pq}$, then the Cayley graph $\Gamma_{pq}(a)$ is an edge-regular graph with parameters $(pq,p-1,\lambda)$, with $\lambda=\left|S_{pq}(a)\cap(S_{pq}(a)+1)\right|$, that has a spread of 1-regular cocliques.
\end{theorem}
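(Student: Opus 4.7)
The plan is to verify three ingredients in turn: (i) $\Gamma_{pq}(a)$ is a well-defined $(p-1)$-regular undirected graph on $pq$ vertices, (ii) it is edge-regular with $\lambda=|S_{pq}(a)\cap(S_{pq}(a)+1)|$, and (iii) the additive cosets of $p\Z/pq\Z$ form a spread of $1$-regular cocliques.

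First I would pin down the order of $a$ in $(\Z/pq\Z)^{*}$. The hypothesis $a^{(p-1)/2}\equiv -1\pmod{pq}$ gives $a^{p-1}\equiv 1\pmod{pq}$, so the order divides $p-1$, while the order of $a$ modulo $p$ equals $p-1$ (since $a\bmod p$ generates $(\Z/p\Z)^{*}$). Hence the order modulo $pq$ is exactly $p-1$, i.e.\ $2i=p-1$, and $S:=S_{pq}(a)=\langle a\rangle$ is a cyclic multiplicative subgroup of $(\Z/pq\Z)^{*}$ of cardinality $p-1$. Moreover $-1=a^{(p-1)/2}\in S$, so $S=-S$, which makes $\Gamma_{pq}(a)$ a well-defined undirected $(p-1)$-regular Cayley graph on $pq$ vertices.

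For edge-regularity I would invoke the standard Cayley-graph count: two adjacent vertices $u,v$ with $v-u=d\in S$ have exactly $|S\cap(S+d)|$ common neighbours. The decisive observation is that, as a multiplicative subgroup, $S$ satisfies $d^{-1}S=S$ for every $d\in S$, so
\[
S+d \;=\; d\bigl(d^{-1}S+1\bigr) \;=\; d(S+1),
\]
giving $S\cap(S+d)=d\cdot\bigl(S\cap(S+1)\bigr)$. Taking cardinalities shows that $|S\cap(S+d)|$ is the same for every $d\in S$ and equals $|S\cap(S+1)|$, proving edge-regularity with the claimed value of $\lambda$.

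For the spread of $1$-regular cocliques I would take $C_{i}:=i+p\Z/pq\Z$ for $i=0,1,\dots,p-1$, the $p$ cosets of the order-$q$ additive subgroup of multiples of $p$. They clearly partition $\Z/pq\Z$. Each $C_{i}$ is a coclique because differences of its elements are nonzero multiples of $p$, while every element of $S$ is a unit modulo $p$. In fact, since $a\bmod p$ generates $(\Z/p\Z)^{*}$, the reduction map $S\to(\Z/p\Z)^{*}$, $s\mapsto s\bmod p$, is a bijection (domain and codomain both have size $p-1$, and the image contains the generator $a$). For $v\notin C_{i}$, the neighbours of $v$ in $C_{i}$ are in bijection with $\{s\in S:s\equiv i-v\pmod p\}$, and since $i-v\not\equiv 0\pmod p$ the bijection above yields exactly one such $s$. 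Hence $C_{i}$ is $1$-regular, completing the proof.

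The main conceptual step is recognising that $S_{pq}(a)$ is a multiplicative subgroup of $(\Z/pq\Z)^{*}$, since this single observation simultaneously makes the edge-regularity computation translation-invariant and forces the reduction modulo $p$ to be a bijection onto $(\Z/p\Z)^{*}$; after this, everything else is routine bookkeeping and I do not foresee a serious obstacle.
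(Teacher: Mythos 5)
Your proof is correct and follows essentially the same route as the paper's: edge-regularity is obtained from the multiplicative closure of $S$ (the paper phrases your translation identity $S\cap(S+d)=d\,(S\cap(S+1))$ as an explicit exponent computation after first reducing, by vertex-transitivity, to the pairs $\{0,a^i\}$), and the spread comes from the cosets of the order-$q$ subgroup generated by $p$, using that $S$ meets each nonzero residue class modulo $p$ exactly once (your bijection $S\to(\Z/p\Z)^*$ is the paper's statement that each coset contains exactly one element of $S\cup\{0\}$). Your explicit check that the order of $a$ in $(\Z/pq\Z)^*$ equals $p-1$, so that $|S|=p-1$, is a small point the paper leaves implicit.
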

\begin{proof}
    We denote $S_{pq}(a)$ by $S$ and $\Gamma_{pq}(a)$ by $\Gamma$. First note that $S=-S$ since $-1\in S$ and that $|S|=p-1$. Obviously $\Gamma$ is $(p-1)$-regular. Since $\Gamma$ is a Cayley graph and thus  vertex-transitive, it is sufficient to check that $|\Gamma(0)\cap\Gamma(a^{i})|=|S\cap(S+1)|$ for all $i=0,\dots,p-2$. Now,
    \begin{align*}
        |\Gamma(0)\cap\Gamma(a^{i})|&=|\{a^{j}\mid a^{j}-a^{i}\in S\}|\\
        &=|\{a^{j}\mid \exists k: a^{j}-a^{i}=a^{k}\}|\\
        &=|\{a^{j}\mid \exists k: a^{j-i}=a^{k-i}+1\}|\\
        &=|\{a^{j'}\mid \exists k': a^{j'}=a^{k'}+1\}|\\
        &=|\{a^{j'}\mid \exists s\in S: a^{j'}=s+1\}|\\
        &=|S\cap(S+1)|\\
        &=\lambda\;,
    \end{align*}
    which shows that $\Gamma$ is edge-regular with parameters $(pq,p-1,\lambda)$.
    \par Let $H$ be the subgroup of $\Z/(pq\Z),+$ generated by the integer $p$; this subgroup has order $q$. It is clear that $S\cap H=\emptyset$. Moreover, a coset of $H$ contains at most one element of $S$ since $p\mid a^{i}-a^{j}$ implies that $a^{i-j}=1\pmod{p}$. Since $|S|=p-1$, each coset of $H$ contains precisely one element of $S\cup\{0\}$. In other words, each element of $\Z/(pq\Z)$ can be written in a unique way as the sum of an element in $H$ and an element in $S\cup\{0\}$. Consequently, each coset of $H$, including $H$ itself is a 1-regular coclique of $\Gamma$. Clearly, the cosets of $H$ form a spread.
\end{proof}
 
\begin{remark}
    In the proof of the previous theorem it is clear that the 1-regular cocliques correspond to the cosets of a subgroup of $\Z/pq\Z,+$. Cayley graphs on a group $G$ wherein a 1-regular coclique corresponds to a subgroup of the group $G$ are called \emph{subgroup perfect codes}. These are interesting in their own right. We refer to \cite{hxz} for a brief survey and to \cite{zhangzhou} for recent work on this topic.
\end{remark}

Using Theorem~\ref{th:mainconstruction} and Theorem~\ref{th:twoprimes}, we can now state our main result of this section.

\begin{theorem}\label{th:constructionpq}
    Let $p$ and $q$ be two different odd primes and let $a\in(\Z/pq\Z)^*$ be a generator of $(\Z/p\Z)^*$ and such that $a^{\frac{p-1}{2}}=-1\pmod{pq}$. Write $S=S_{pq}(a)$. If $|S\cap(S+1)|\equiv-2\pmod{q}$, then $F_{(\pi_{2},\dots,\pi_{t})}(\Gamma_{pq}(a))$, with $t=\frac{|S\cap(S+1)|+2}{q}$ and $\pi_i\in\Sym_{p}$ for $i=2,\dots,t$, is a Neumaier graph with parameters $(tpq,p+|S\cap(S+1)|,|S\cap(S+1)|;1,|S\cap(S+1)|+2)$.
\end{theorem}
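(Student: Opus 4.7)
The plan is to derive Theorem \ref{th:constructionpq} as a direct combination of Theorem \ref{th:twoprimes} and Theorem \ref{th:mainconstruction}. First, I would apply Theorem \ref{th:twoprimes}: the hypotheses that $a$ reduces to a generator of $(\Z/p\Z)^{*}$ and satisfies $a^{(p-1)/2}\equiv -1\pmod{pq}$, together with $p$ an odd prime and $q$ an odd integer, are exactly those required by Theorem \ref{th:twoprimes}. Thus $\Gamma_{pq}(a)$ is edge-regular with parameters $(v,k,\lambda)=(pq,\,p-1,\,|S\cap(S+1)|)$ and admits a spread of $1$-regular cocliques, namely the cosets of the subgroup $H=\langle p\rangle$ of order $q$ in $(\Z/pq\Z,+)$.

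Second, I would match these outputs against the hypotheses of Theorem \ref{th:mainconstruction}. The number of cocliques in the spread is $pq/q=p=k+1$, so the spread is indexed by $\{1,\dots,k+1\}$ as required, and the permutations $\pi_{i}$ live in $\Sym_{k+1}=\Sym_{p}$. Theorem \ref{th:mainconstruction} prescribes the number of copies to be
\[
    t=\frac{(\lambda+2)(k+1)}{v}=\frac{(\lambda+2)p}{pq}=\frac{\lambda+2}{q}\,,
\]
which is a positive integer precisely when $\lambda\equiv -2\pmod{q}$; this is exactly the assumption $|S\cap(S+1)|\equiv -2\pmod{q}$ in the statement. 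Feeding the $t$ copies of $\Gamma_{pq}(a)$ together with any choice $\pi_{2},\dots,\pi_{t}\in\Sym_{p}$ into Theorem \ref{th:mainconstruction} then immediately yields a Neumaier graph with parameters
\[
    (vt,\,k+\lambda+1,\,\lambda;\,1,\,\lambda+2)=(tpq,\,p+|S\cap(S+1)|,\,|S\cap(S+1)|;\,1,\,|S\cap(S+1)|+2),
\]
which is exactly the asserted conclusion.

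There is no substantive obstacle: the argument is purely a bookkeeping step chaining the two cited theorems. The only verifications are the two checks above, namely that the spread constructed in Theorem \ref{th:twoprimes} has exactly $k+1$ parts, and that the congruence hypothesis is precisely what is needed to turn $(\lambda+2)(k+1)/v$ into a positive integer. Note that the hypothesis that $q$ is prime is stronger than what the present proof uses (Theorem \ref{th:twoprimes} only needs $q$ odd); the primality of $q$ will be invoked in Section \ref{sec:parameters} when showing that the congruence $|S\cap(S+1)|\equiv -2\pmod{q}$ and the other required conditions on $(p,q,a)$ can be met simultaneously by infinitely many triples, via a Jacobi sum analysis.
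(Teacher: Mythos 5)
Your proposal is correct and matches the paper's own proof, which likewise obtains Theorem \ref{th:constructionpq} by applying Theorem \ref{th:twoprimes} to get an edge-regular graph with a spread of $1$-regular cocliques and then invoking Theorem \ref{th:mainconstruction}. Your explicit verifications that the spread has $p=k+1$ parts and that the congruence hypothesis makes $t=(\lambda+2)(k+1)/v=(\lambda+2)/q$ an integer are exactly the bookkeeping the paper leaves implicit.
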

\begin{proof}
    It follows from Theorem \ref{th:twoprimes} that $\Gamma_{pq}(a)$ is an edge-regular graph with a spread of 1-regular cocliques. The theorem then follows from an application of Theorem \ref{th:mainconstruction}.
\end{proof}

\begin{remark}
    Tables \ref{tab:constructionqprime} and \ref{tab:constructionqnonprime} contain several parameter sets $(q,p,a)$ for which indeed $|S_{pq}(a)\cap(S_{pq}(a)+1)|\equiv-2\pmod{q}$ and thus a Neumaier graph can be constructed using Theorem \ref{th:constructionpq}. Note that in general many non-isomorphic examples can be constructed by chosing different $\pi_i\in\Sym_{p}$, for $i=2,\dots,t$, if $t\geq2$.  
    If $\gcd(i,p-1)=1$, then $a$ and $a^{i}$ clearly generate the same subgroup of $(\Z/pq\Z)^*$, so $\Gamma_{pq}(a)$ and $\Gamma_{pq}(a^{i})$ are equal. So, in Tables \ref{tab:constructionqprime} and \ref{tab:constructionqnonprime} only one generator for each subgroup is given.
    \par We also point out that if $|S_{pq}(a)\cap(S_{pq}(a)+1)|\equiv-2\pmod{q}$, then $|S_{pq}(a)\cap(S_{pq}(a)+1)|\geq q-2$. It follows immediately that $p>q$. In particular $\gcd(p,q)=1$.
\end{remark}

\begin{remark}
    In most applications of Theorem \ref{th:constructionpq} we have $t\geq2$. We know by Theorem \ref{th:mainconstructionstrictly} that in these cases the construction produces strictly Neumaier graphs. However, also when $t=1$, the construction in Theorem \ref{th:constructionpq} often produces a strictly Neumaier graph, e.g. the graph $F(\Gamma_{65}(2))$ is a strictly Neumaier graph. This is the smallest graph that arises from this construction.
\end{remark}

\begin{remark}
    In Theorem \ref{th:constructionpq} we take $t$ copies of the graph $\Gamma_{pq}(a)$. However, there are also other options in some cases. E.g. if $q=13$ and $p=397$, both $\Gamma_{5161}(6)$ and $\Gamma_{5161}(20)$ are edge-regular graphs with parameters (5161,396,24), but these graphs are not isomorphic (it can be checked that they have different spectrum). We know that $F_{\pi_{2}}(\Gamma_{5161}(6))$ and $F_{\pi_{2}}(\Gamma_{5161}(20))$ are strictly Neumaier graphs for any $\pi_{2}\in\Sym_{397}$, but we can also apply Theorem \ref{th:mainconstruction} with one copy of each: $F_{\pi_{2}}(\Gamma_{5161}(6),\Gamma_{5161}(20))$ is also a strictly Neumaier graph for any $\pi_{2}\in\Sym_{397}$.
\end{remark}

\begin{table}[!htp]
    \centering
    \begin{tabular}{c|c|c|c||c|c|c|c}
        $q$ & $p$ & $a$ & $t$ & $v$ & $k$ & $\lambda$ & $s$ \\\hline\hline
        5 & 13 & 2 & 1 & 65 & 16 & 3 & 5\\\cline{2-8}
          & 37 & 2 & 1 & 185 & 40 & 3 & 5\\\cline{2-8}
          & 61 & 17 & 4 & 1220 & 79 & 18 & 20\\\cline{2-8}
          & 149 & 13 & 4 & 2980 & 167 & 18 & 20\\\cline{3-8}
          & & 2 & 7 & 5215 & 182 & 33 & 35\\\cline{2-8}
          & 197 & 3 & 10 & 9850 & 245 & 48 & 50\\\cline{2-8}
          & 269 & 3 & 10 & 13450 & 317 & 48 & 50\\\cline{3-8}
          &  & 2 & 13 & 17485 & 332 & 63 & 65\\\cline{2-8}
          & 293 & 2 & 13 & 19045 & 356 & 63 & 65\\\cline{2-8}
          & 397 & 13 & 13 & 25805 & 460 & 63 & 65\\\cline{2-8}
          & 421 & 2 & 13 & 27365 & 484 & 63 & 65\\\cline{2-8}
          & 557 & 13 & 22 & 61270 & 665 & 108 & 110\\\cline{2-8}
          & 613 & 13 & 22 & 67430 & 721 & 108 & 110\\\cline{2-8}
          & 661 & 18 & 28 & 92540 & 799 & 138 & 140\\\cline{2-8}
          & 677 & 7 & 22 & 74470 & 785 & 108 & 110\\\cline{2-8}
          & 701 & 2 & 31 & 108655 & 854 & 153 & 155\\\cline{2-8}
          & 773 & 3 & 34 & 131410 & 941 & 168 & 170\\\cline{2-8}
          & 821 & 2 & 31 & 127255 & 974 & 153 & 155\\\cline{2-8}
          & 829 & 47 & 28 & 116060 & 967 & 138 & 140\\\cline{3-8}
          & & 2 & 31 & 128495 & 982 & 153 & 155\\\cline{2-8}
          & 853 & 18 & 28 & 119420 & 991 & 138 & 140\\\hline
        7 & 79 & 54 & 1 & 553 & 84 & 5 & 7\\\cline{2-8}
          & 103 & 45 & 1 & 721 & 108 & 5 & 7\\\cline{2-8}
          & 127 & 12 & 2 & 1778 & 139 & 12 & 14\\\cline{2-8}
          & 139 & 26 & 4 & 3892 & 165 & 26 & 28\\\cline{2-8}
          & 307 & 45 & 8 & 17192 & 361 & 54 & 56\\\cline{2-8}
          & 379 & 10 & 8 & 21224 & 433 & 54 & 56\\\cline{2-8}
          & 487 & 3 & 8 & 27272 & 541 & 54 & 56\\\cline{2-8}
          & 547 & 33 & 16 & 61264 & 657 & 110 & 112\\\cline{2-8}
          & 571 & 3 & 16 & 63952 & 681 & 110 & 112\\\cline{2-8}
          & 631 & 3 & 11 & 48587 & 706 & 75 & 77\\\cline{2-8}
          & 691 & 12 & 16 & 77392 & 801 & 110 & 112\\\hline
        11 & 131 & 2 & 1 & 1441 & 140 & 9 & 11\\\cline{2-8}
           & 991 & 6 & 10 & 109010 & 1099 & 108 & 110\\\hline
        13 & 61 & 2 & 1 & 793 & 72 & 11 & 13\\\cline{2-8}
           & 397 & 6 & 2 & 10322 & 421 & 24 & 26\\\cline{3-8}
           & & 20 & 2 & 10322 & 421 & 24 & 26\\\cline{2-8}
           & 829 & 2 & 5 & 53885 & 892 & 63 & 65\\\hline
        17 & 977 & 23 & 1 & 16609 & 992 & 15 & 17\\\hline
    \end{tabular}
    \caption{Parameter sets $(q,p,a)$, with $q\leq 17$ and $p\leq 1000$, for which the conditions in Theorem \ref{th:constructionpq} are fulfilled. We give the parameter $t$ and the parameters of the resulting Neumaier graphs. Recall that $e=1$.}
    \label{tab:constructionqprime}
\end{table}

\begin{table}[!htp]
    \centering
    \begin{tabular}{c|c|c|c||c|c|c|c}
        $q$ & $p$ & $a$ & $t$ & $v$ & $k$ & $\lambda$ & $s$ \\\hline\hline
        25 & 1021 & 77 & 2 & 51050 & 1069 & 48 & 50\\\cline{3-8}
           & & 122 & 2 & 51050 & 1069 & 48 & 50\\\cline{2-8}
           & 1181 & 42 & 2 & 59050 & 1229 & 48 & 50\\\cline{2-8}
           & 1301 & 3 & 2 & 65050 & 1349 & 48 & 50\\\cline{3-8}
           & & 73 & 2 & 65050 & 1349 & 48 & 50\\\cline{2-8}
           & 1381 & 42 & 2 & 69050 & 1429 & 48 & 50\\\cline{3-8}
           & & 123 & 2 & 69050 & 1429 & 48 & 50\\\cline{2-8}
           & 1621 & 88 & 2 & 81050 & 1669 & 48 & 50\\\cline{3-8}
           & & 113 & 2 & 81050 & 1669 & 48 & 50\\\cline{2-8}
           & 1741 & 197 & 2 & 87050 & 1789 & 48 & 50\\\cline{2-8}
           & 2141 & 58 & 2 & 107050 & 2189 & 48 & 50\\\cline{3-8}
           &  & 112 & 2 & 107050 & 2189 & 48 & 50\\\hline   
    \end{tabular}
    \caption{Parameter sets $(q,p,a)$, with $q=25$ and $p\leq 2400$, for which the conditions in Theorem \ref{th:constructionpq} are fulfilled. We give the parameter $t$ and the parameters of the resulting Neumaier graphs. Recall that $e=1$.}
    \label{tab:constructionqnonprime}
\end{table}

\section{Discussion of the parameters}\label{sec:parameters}

Given the construction of (strictly) Neumaier graphs in Theorem \ref{th:constructionpq}, we wonder for which odd integers $q$ we can find primes $p$ and corresponding integers $a$ satisfying the stated conditions. We know from Tables \ref{tab:constructionqprime} and \ref{tab:constructionqnonprime} that there are indeed such parameter sets $(q,p,a)$. In particular we ask ourselves whether the construction from Theorem \ref{th:constructionpq} produces an infinite number of (strictly) Neumaier graphs, and whether for any $q$ we can find a prime $p$ and an integer $a$ satisfying the conditions.

Regarding the first question, we will show that actually there is an infinite number of odd integers $q$ such that for each of them there is an infinite number of primes $p$ for which an integer $a$ exists, satisfying the conditions from Theorem \ref{th:constructionpq}, thereby showing that the construction from this theorem produces an infinite number of (strictly) Neumaier graphs. We refer to Sections \ref{ssec:infinite5}, \ref{ssec:infinite7} and \ref{ssec:infiniteinf}. For $q=5$ and $q=7$ we also determine the density of the primes $p$ for which an admissible $a$ exists. The proofs in these sections rely on a formula given in Section \ref{ssec:formula}, which involves Jacobi sums. Therefore we give a gentle introduction to Jacobi sums in Section \ref{ssec:jacobi}. \par We investigate the second question in Section \ref{ssec:nonadmissible}, obtaining some values of $q$ that are not admissible.


\subsection{Non-admissible \texorpdfstring{$q$}{q}'s}\label{ssec:nonadmissible}

Note that $q=3$ and $q=9$ are notably absent from Tables \ref{tab:constructionqprime} and \ref{tab:constructionqnonprime}. We will show that this is no coincidence. In Remark \ref{rem:nomultiplesof3} we will see that $q$ cannot be a multiple of 3.

\begin{theorem}\label{th:twoprimesmodulo}
	Let $p$ be an odd prime, let $q$ be an odd integer and let $a\in(\Z/pq\Z)^*$ be such that $a\pmod{p}$ is a generator of $(\Z/p\Z)^*,\cdot$ and such that $a^{\frac{p-1}{2}}\equiv-1\pmod{pq}$. Denote the set of elements of order $6$ in $S_{pq}(a)$ by $Z_{6}$ (if there are none $Z_{6}=\emptyset$). Then $\left|S_{pq}(a)\cap(S_{pq}(a)+1)\right|\equiv3\delta+2\epsilon\pmod{6}$, where
	\begin{align*}
		\delta=
		\begin{cases}
			1 & \text{if } 2\in S_{pq}(a)\\
			0 & \text{if } 2\notin S_{pq}(a)
		\end{cases}
		&&
		\epsilon=
		\begin{cases}
			1 & \text{if } Z_{6}\cap (S_{pq}(a)+1)\neq\emptyset\\
			0 & \text{if } Z_{6}\cap (S_{pq}(a)+1)=\emptyset
		\end{cases}\;.
	\end{align*} 
\end{theorem}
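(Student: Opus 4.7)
The plan is to exploit the natural $S_3$-action on $S_{pq}(a)\cap(S_{pq}(a)+1)$ given by the six fractional linear transformations $x,\,1-x,\,1/x,\,(x-1)/x,\,1/(1-x),\,x/(x-1)$ preserving $\{0,1,\infty\}$, and then to read off the cardinality modulo $6$ from the orbit decomposition. Write $S:=S_{pq}(a)$. First, I would verify that the two involutions $\sigma\colon x\mapsto 1-x$ and $\iota\colon x\mapsto 1/x$ (which generate the group) preserve $S\cap(S+1)$: if both $x$ and $x-1$ lie in $S$, then $1-x=-(x-1)$ and $1/x$ lie in $S$ (using that $S$ is a subgroup containing $-1=a^{(p-1)/2}$), and similarly $(1-x)-1=-x\in S$ and $1/x-1=-(x-1)/x\in S$. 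Since every orbit has size dividing $6$, we obtain $|S\cap(S+1)|\equiv 3\,O_3+2\,O_2\pmod 6$, where $O_k$ counts the orbits of size $k$ (there are no orbits of size $1$ outside sporadic small cases, which would be disposed of by direct inspection).

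The size-$3$ orbits correspond to elements fixed by exactly one involution; the three involutions in $S_3$ have fixed points $1/2$, $\pm 1$, and $\{0,2\}$, which fuse into the single orbit $\{1/2,2,-1\}$. Since $\pm 1\in S$ always, this orbit is contained in $S\cap(S+1)$ if and only if $2\in S$, i.e.\ if and only if $\delta=1$; hence $O_3=\delta$.

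The size-$2$ orbits correspond to fixed points of $\tau:=\sigma\iota\colon x\mapsto(x-1)/x$, equivalently to roots of the cyclotomic polynomial $\Phi_6(X)=X^2-X+1$ in $\Z/pq\Z$. For such an $x$, the relation $x+x^{-1}=1$ gives $1-x=x^{-1}$, so the orbit is $\{x,x^{-1}\}$, and $x-1=x^2$ lies in $S$ whenever $x$ does; thus the orbit is contained in $S\cap(S+1)$ precisely when $x\in S$, and the number of such orbits is $0$ or $1$. The main task is to identify this count with $\epsilon$. One direction is easy: if $\Phi_6(x)=0$ with $x\in S$, then $x^3=-1\neq 1$ and $x^6=1$ force $\ord(x)=6$ (since $\Phi_6(-1)=3\not\equiv 0\pmod{pq}$ for $pq>3$), so $x\in Z_6\cap(S+1)$.

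The crux is the converse: given $x\in Z_6\cap(S+1)$, one must show $\Phi_6(x)\equiv 0\pmod{pq}$. Writing $x=a^k$, the hypothesis $\ord_{pq}(x)=6$ together with $\ord_p(a)=p-1$ forces $\ord_p(x)=6$, hence $\Phi_6(x)\equiv 0\pmod p$ and $x^2\equiv x-1\pmod p$. Since $x-1\in S$, write $x-1=a^j$; comparing modulo $p$ gives $a^j\equiv a^{2k}\pmod p$, so $j\equiv 2k\pmod{p-1}$, whence $x-1=a^{2k}=x^2$ already as elements of $\Z/pq\Z$. Reducing modulo $q$ yields $\Phi_6(x)\equiv 0\pmod q$, and combining via CRT gives $\Phi_6(x)\equiv 0\pmod{pq}$. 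Thus $O_2=\epsilon$, and the orbit decomposition gives $|S\cap(S+1)|\equiv 3\delta+2\epsilon\pmod 6$.
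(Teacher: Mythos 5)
Your proof is correct and follows essentially the same route as the paper's: the $S_3$-action on $S\cap(S+1)$ by the fractional linear transformations fixing $\{0,1,\infty\}$, with the unique possible size-$3$ orbit $\{-1,\tfrac12,2\}$ accounting for $3\delta$ and the unique possible size-$2$ orbit of roots of $X^2-X+1$ accounting for $2\epsilon$. If anything, your handling of the converse direction for $\epsilon$ --- that any $x\in Z_6\cap(S_{pq}(a)+1)$ must actually satisfy $x^2-x+1=0$ in $\Z/pq\Z$, deduced from the injectivity of reduction modulo $p$ on $S_{pq}(a)$ --- is spelled out more carefully than in the paper, which records only the easy implication.
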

\begin{proof}
	We denote $S_{pq}(a)$ by $S$. Define the maps $\varphi$ on $\Z/pq\Z$ and $\psi$ on $(\Z/pq\Z)^{*}$ as follows: $\varphi(x)=1-x$ and $\psi(x)=\frac{x-1}{x}$. If $b\in S\cap(S+1)$, then there are integers $m,n$ such that $b=a^{m}=a^{n}+1$, and we can see that
	\begin{align*}
		\varphi(b)&=1-(a^{n}+1)=a^{\frac{p-1}{2}+n} &\psi(b)&=\frac{(a^{n}+1)-1}{a^{m}}=a^{n-m}\\
		&=1-a^{m}=a^{\frac{p-1}{2}+m}+1 &&=\frac{a^{m}-1}{a^{m}}=1+a^{\frac{p-1}{2}-m}\;,
	\end{align*}
	hence $\varphi(b),\psi(b)\in S\cap (S+1)$. So, we can look at the restriction of $\varphi$ and $\psi$ to $S\cap (S+1)$; note that $S\subseteq(\Z/pq\Z)^{*}$, and that $1\notin S+1$. We will denote these restrictions also by $\varphi$ and $\psi$. It can easily be seen that $\varphi^{2}=id=\psi^{3}$ and that $\varphi\circ\psi=\psi^{2}\circ\varphi$. So the group $G=\left\langle\varphi,\psi\right\rangle$ is isomorphic to $S_{3}$ and acts naturally on $S\cap (S+1)$. The orbits of this action have size 1, 2, 3 or 6.
	\par It is easy to see that there are no orbits of size 1. Any orbit of size 2 is of the form $\{x,x^{-1}\}$ for some $x\in S\subseteq(\Z/pq\Z)^{*}$ satisfying $x^{2}-x+1=0$. Then $x \pmod p$ satisfies the same equation in $(\Z/p\Z)^{*}$ i.e.~it is a primitive $6$th root of unity. However, in $(\Z/p\Z)^{*}$ there are at most two primitive sixth roots of unity. Since each element of $S$ corresponds to a unique element in $(\Z/p\Z)^{*}$, there is at most one orbit of size 2. Moreover, there is such an orbit if there is an $x\in S\subseteq(\Z/pq\Z)^{*}$ satisfying $x^{2}-x+1=0$; such an $x$ clearly has order $6$ in $(\Z/pq\Z)^{*}$.
	\par In a similar but easier way, if $2 \in S$, then also $2 \in S \cap (S+1)$ and there is precisely one orbit of size 3, namely $\left\{-1,\frac{1}{2},2\right\}$, and else there are no orbits of size 3. All other orbits have size 6. So, indeed $\left|S\cap(S+1)\right|\equiv3\delta+2\epsilon\pmod{6}$.
\end{proof}

\begin{corollary}\label{cor:not1mod3}
	Let $p$ be an odd prime, let $q$ be an odd integer and let $a\in(\Z/pq\Z)^*$ be such that $a\pmod{p}$ is a generator of $(\Z/p\Z)^*,\cdot$ and such that $a^{\frac{p-1}{2}}\equiv-1\pmod{pq}$. Then we have $\left|S_{pq}(a)\cap(S_{pq}(a)+1)\right|\not\equiv1\pmod{3}$.
\end{corollary}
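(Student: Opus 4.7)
The plan is to derive this corollary as an immediate consequence of Theorem \ref{th:twoprimesmodulo}. That theorem already pins down $|S_{pq}(a) \cap (S_{pq}(a)+1)|$ modulo $6$ in terms of the two binary parameters $\delta,\epsilon \in \{0,1\}$, so the remaining work is just to observe that none of the four possible values of $3\delta + 2\epsilon$ are congruent to $1$ modulo $3$.

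More precisely, I would enumerate the four cases: $(\delta,\epsilon) = (0,0), (0,1), (1,0), (1,1)$ give $3\delta+2\epsilon$ equal to $0, 2, 3, 5$ respectively, and reducing these modulo $3$ yields $0, 2, 0, 2$. Hence the residue class $1 \pmod 3$ is never attained. Since Theorem \ref{th:twoprimesmodulo} guarantees that $|S_{pq}(a) \cap (S_{pq}(a)+1)| \equiv 3\delta + 2\epsilon \pmod 6$ under exactly the hypotheses listed in the corollary, the claim follows immediately.

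There is no real obstacle here; the only thing worth doing carefully is to note explicitly that any congruence mod $6$ forces a congruence mod $3$, so the mod $6$ information in Theorem \ref{th:twoprimesmodulo} transfers cleanly. The proof is essentially one line of case-checking plus a citation of the previous theorem.
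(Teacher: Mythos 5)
Your proposal is correct and matches the paper's intent exactly: the paper gives no separate proof of this corollary, treating it as an immediate consequence of Theorem \ref{th:twoprimesmodulo}, and your case-check of the four values $0,2,3,5$ of $3\delta+2\epsilon$ (which reduce to $0,2,0,2$ modulo $3$) is precisely the omitted verification.
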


\begin{remark}\label{rem:nomultiplesof3}
    From Corollary \ref{cor:not1mod3} it follows that $\left|S_{pq}(a)\cap(S_{pq}(a)+1)\right|\not\equiv-2\pmod{q}$ if $q$ is a multiple of 3, given a prime $p$ and an integer $a$ satisfying the conditions of Theorem \ref{th:constructionpq}. So, for any $q$ which is a multiple of 3, it is impossible to construct a Neumaier graph using the construction in Theorem \ref{th:constructionpq}.
\end{remark}

\subsection{A joint condition on \texorpdfstring{$p$}{p} and \texorpdfstring{$q$}{q}}\label{ssec:conditions}

As we mentioned before, it is our aim to prove that there is an infinite number of odd integers $q$ such that for each of them there is an infinite number of primes $p$ for which an integer $a$ exists, satisfying the conditions from Theorem \ref{th:constructionpq}. We will show this in Sections \ref{ssec:infinite5}, \ref{ssec:infinite7} and \ref{ssec:infiniteinf}. This section serves as an introduction to that, fixing some notation.
\par Consider a positive odd integer $q$, a prime number $p > q$ and let $r = \nu_2(p-1) \geq 1$ denote the $2$-valuation of $p-1$, i.e.\ $2^{r}\mid p-1$, but $2^{r+1}\nmid p-1$. Let $a \in (\Z/pq\Z)^\ast$ be such that $a^{(p-1)/2} = -1$, and let $\alpha \in \F_p^\ast=(\Z/p\Z)^*$ and $\beta \in (\Z / q\Z)^\ast$ denote the reductions $a$ modulo $p$ and $a$ modulo $q$, respectively. As before, it is assumed that $\alpha$ is a generator of $\F_p^\ast$. In Section~\ref{ssec:formula} we will give a formula for the cardinality of $S \cap (S+1)$ with $S = S_{pq}(a)$ in terms of Jacobi sums of order $n = \ord(\beta)$.

Let us first discuss a joint condition on $p$ and $q$ for there to exist such an element $a$, regardless of the value of $| \, S \cap (S+1) \, |$. Consider the factorization $q = \ell_1^{e_1} \cdots \ell_k^{e_k}$ of $q$ into powers of distinct (necessarily odd) primes $\ell_i$. For each $i$, let $\beta_i \in (\Z / \ell_i^{e_i} \Z)^\ast$ be the reduction of $\beta$ modulo $\ell^{e_i}$, and denote by $n_i$ its order. From
\[ \beta_i^{(p-1)/2} = -1 \]
it follows that $n_i \mid p-1$ and that $\nu_2(n_i) = \nu_2(p-1) = r$, independently of $i$ (in particular all $n_i$ are even). This is only possible if $p, q$ are such that $2^r \mid \varphi(\ell_i^{e_i}) = \ell_i^{e_i-1} (\ell_i - 1)$, or in other words such that
\begin{align}\label{eq:condition}
2^r \mid \ell_i - 1, \quad \text{for all $i = 1, \ldots k$}. 
\end{align}
For use below, we note that $n = \lcm(n_1, \ldots, n_k)$ then also satisfies $\nu_2(n) = r$ (in particular $n$ is even), so that 
\[ \beta_i^{n/2} = -1 \] 
for all $i$, which in turn implies that $\beta^{n/2} = -1$.

Condition~\eqref{eq:condition} is necessary, but also  sufficient. Indeed, if $p, q$ are such that $2^r \mid \ell_i - 1$ for all $i$, then we can choose any elements $\beta_i \in (\Z / \ell_i^{e_i} \Z)^\ast$ of order $2^rs_i$, with $s_i$ some odd common divisor of $p-1$ and $\varphi(\ell_i^{e_i}) $, and any generator $\alpha$ of $\F_p^\ast$, and combine them into an element $a \in (\Z / pq \Z)^\ast$ of the desired form, using the Chinese remainder theorem.

\subsection{Preliminaries on Jacobi sums}\label{ssec:jacobi}

For an odd prime number $p$, a \emph{character} mod $p$
is a group homomorphism $\chi : \F_p^\ast \to \C^\ast$. The image of $\chi$ is the group $\mu_n$ of $n$-th roots of unity, for some integer $n \geq 1$ dividing $p-1$ that we call the \emph{order} of $\chi$. Equivalently, the order of $\chi$ is just its order as an element of the character group (i.e., with respect to point-wise multiplication). If $n = 1$ then $\chi$ is said to be trivial. We always have $\chi(1) = 1$, and it is customary to extend the domain of $\chi$ to all of $\F_p$ by defining $\chi(0) = 0$, unless $\chi$ is trivial in which case one lets $\chi(0) = 1$.

If $\chi$ and $\lambda$ are two characters mod $p$, then the corresponding \emph{Jacobi sum} is defined to be
\[ J(\chi, \lambda) = \sum_{\substack{c, d \in \F_p \\ c + d = 1}} \chi(c) \lambda(d),\]
which we note is the complex conjugate of $J(\chi^{-1}, \lambda^{-1})$.
If $\varepsilon$ denotes the trivial character mod $p$, then we have the immediate rule
\begin{equation} \label{eq:trivialcharacter} 
J(\chi, \varepsilon) = \left\{ \begin{array}{ll} 0 & \text{if $\chi \neq \varepsilon$,} \\ p & \text{if $\chi = \varepsilon$,} \\ \end{array} \right.
\end{equation}
and it is not hard to check that 
\begin{equation} \label{eq:Jchichimin1}
  J(\chi, \chi^{-1}) = - \chi(-1)
\end{equation}
as soon as $\chi \neq \varepsilon$. More advanced identities can be found in~\cite[Ch.\,8]{irelandrosen}, to which we refer for a gentle introduction to Jacobi sums, and in~\cite[Ch.\,3]{berndtevanswilliams}, which contains explicit formulae for Jacobi sums involving characters of order $n \leq 8$ and $n = 10, 12, 16, 20, 24$. For the reader's convenience, let us include the cases $n = 2, 4, 6$. We denote the square roots of $-1$ by $\pm\mathbf{i}$.

\begin{example} If $\chi$ is a character of order $2$, then
\[ J(\chi, \chi) = J(\chi, \chi^{-1}) = - \chi(-1) = - \left( \frac{-1}{p} \right) = (-1)^{\frac{p+1}{2}}.\]
\end{example}

\begin{example} \cite[Section 3.2]{berndtevanswilliams} \label{ex:jacobiq5}
If $\chi$ is a character of order $4$, then necessarily $p \equiv 1 \pmod 4$. Let $g \in \F_p^\ast$ be such that $\chi(g) = \mathbf{i}$. There exist unique integers $x, y$ such that
\begin{equation} \label{eq:sumoftwosquares} p = x^2 + y^2, \qquad x \equiv - \left( \frac{2}{p} \right) \pmod 4, \qquad y \equiv x g^{\frac{p-1}{4}} \pmod p. 
\end{equation}
Then the values of $J(\chi^i, \chi^j)$ for $i,j = 1, 2, 3$ are as follows:
\begin{center}
\vspace{0.1cm}
\begin{tabular}{|c|c|c|c|}\hline
\diagbox{$i$}{$j$}&
     $1$      &      $2$     & $3$ \\ \hline
$1$ &  $(-1)^f (x + y\mathbf{i})$      &    $x + y\mathbf{i}$  &  $(-1)^{f + 1}$   \\ \hline
$2$ &   $x + y\mathbf{i}$  &      $-1$    &  $x - y\mathbf{i}$   \\ \hline
$3$ &   $(-1)^{f + 1}$ &  $x - y\mathbf{i}$   &  $(-1)^f (x - y\mathbf{i})$   \\ \hline
\end{tabular}
\vspace{0.15cm}
\end{center}
where $f = (p-1)/4$.
\end{example}

\begin{example} \cite[Section 3.1]{berndtevanswilliams} \label{ex:jacobiq7}
If $\chi$ is a character of order $6$, then we must have $p \equiv 1 \pmod 6$. Let $\zeta = e^{2 \pi \mathbf{i} / 6} = (1 + \mathbf{i}\sqrt{3})/2$ and let $g \in \F_p^\ast$ be such that $\chi(g) = \zeta$. There exist unique integers $x,y$ such that
\begin{equation} \label{eq:sum2ofsquares} p = x^2 + 3y^2, \qquad x \equiv - 1 \pmod 3, \qquad 3y \equiv (2g^{\frac{p-1}{3}} + 1)x \pmod p\;. 
\end{equation}
We further define
\[ \left\{ \begin{array}{lll} 
r = 2x, \, s = 2y, & u = 2x, \, v = 2y, & \text{if $y \equiv 0 \pmod 3$,} \\
r = - x + 3y, \, s = -x - y, & u = -x -3y, \, v = x - y,  & \text{if $y \equiv 1 \pmod 3$,} \\
r = - x - 3y, \, s = x - y, & u = -x + 3y, \, v = -x - y,  & \text{if $y \equiv 2 \pmod 3$,} \\ \end{array} \right.\]
where we note that $4p = r^2 + 3s^2 = u^2 + 3v^2$. The values of $J(\chi^i, \chi^j)$ for $i,j = 1, 2, 3, 4, 5$ are as follows:
\begin{center}
\vspace{0.1cm}
\begin{tabular}{|c|c|c|c|c|c|}\hline
\diagbox{$i$}{$j$}&
     $1$      &      $2$     & $3$ & $4$ & $5$ \\ \hline
$1$ &  $(-1)^{f} \frac{u + v \mathbf{i}\sqrt{3}}{2}$ & $x + y \mathbf{i}\sqrt{3}$ & $(-1)^f (x + y\mathbf{i}\sqrt{3})$ & $\frac{u + v \mathbf{i}\sqrt{3}}{2}$ & $(-1)^{f+1}$ \\ \hline
$2$ &  $x + y\mathbf{i}\sqrt{3}$ & $\frac{r + s \mathbf{i}\sqrt{3}}{2}$ & $x + y\mathbf{i}\sqrt{3}$ & $-1$ & $\frac{u - v \mathbf{i}\sqrt{3}}{2}$\\ \hline
$3$ &  $(-1)^{f} (x + y\mathbf{i}\sqrt{3})$ & $x + y\mathbf{i}\sqrt{3}$ & $(-1)^{f + 1}$ & $x - y\mathbf{i}\sqrt{3}$ & $(-1)^f (x - y\mathbf{i}\sqrt{3})$ \\ \hline
$4$ &  $\frac{u + v \mathbf{i}\sqrt{3}}{2}$ & $-1$ & $x - y\mathbf{i}\sqrt{3}$ & $\frac{r - s \mathbf{i}\sqrt{3}}{2}$ & $x - y\mathbf{i}\sqrt{3}$ \\ \hline
$5$ &  $(-1)^{f + 1}$ & $\frac{u - v \mathbf{i}\sqrt{3}}{2}$ & $(-1)^f (x - y\mathbf{i}\sqrt{3})$ & $x - y\mathbf{i}\sqrt{3}$ & $(-1)^{f} \frac{u - v \mathbf{i}\sqrt{3}}{2}$ \\ \hline 
\end{tabular}
\vspace{0.15cm}
\end{center}
\end{example}
\noindent where $f = (p-1)/6$.

\subsection{A formula for \texorpdfstring{$| \, S \cap (S + 1) \, |$}{|S cap (S+1)|}} \label{ssec:formula}

We can convert the natural surjection
$\xi : \F_p^\ast \to \langle \beta \rangle : \alpha^j \mapsto \beta^j$ into an order-$n$ character $\chi$ by composing it with the isomorphism
\[ \psi : \langle \beta \rangle \to \mu_n : \beta^j  \mapsto e^{ 2 \pi \mathbf{i} j / n}.  \]
Recall from Section~\ref{ssec:conditions} that $\beta^{n/2} = -1$, hence $\psi(-1) = -1$, so that 
\[ \chi(-1) = \psi\left(\xi\left(\alpha^{(p-1)/2}\right)\right) = \psi\left(\beta^{(p-1)/2}\right) = \psi(-1) = -1\;.  \]
The proof below makes a frequent use of this fact. For a complex number $z$ we denote the real part by $\Re(z)$.

\begin{theorem}\label{thm:formula}
Writing $B = \{ \, b \in \langle \beta \rangle \, | \, b - 1 \in \langle \beta \rangle \, \}$, we have
\begin{equation} \label{eq:formulaforScapS+1} 
| \, S \cap (S + 1) \, | \ = \ \frac{1}{n^2} \left( (p + 1) \left| B \right| +  
\sum_{1 \leq i \leq j < n-i} 2(2 - \delta_{i,j})\Re(c_{i,j} J(\chi^i, \chi^j)) \right), 
\end{equation}
where $c_{i,j} = \sum_{b \in B} \psi(b)^{-i} \psi(1- b)^{-j}$
and $\delta_{i,j}$ is the Kronecker symbol. 
\end{theorem}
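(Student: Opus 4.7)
The plan is to apply the Chinese remainder theorem together with character orthogonality. Under the identification $\Z/pq\Z \cong \F_p \times \Z/q\Z$, writing $x \leftrightarrow (u, v)$, an element $x$ lies in $S$ iff $u \in \F_p^*$, $v \in \langle \beta \rangle$ and $\chi(u) = \psi(v)$: the last condition synchronises $u$ and $v$ as the same power of $a$. Consequently, $x \in S \cap (S + 1)$ exactly when $u \notin \{0, 1\}$, $v \in B$, $\chi(u) = \psi(v)$ and $\chi(u - 1) = \psi(v - 1)$. I would then expand both indicators via $\mathds{1}[\chi(u) = \psi(v)] = \frac{1}{n}\sum_{i=0}^{n-1} \chi^i(u)\psi(v)^{-i}$, valid since $\chi(u), \psi(v) \in \mu_n$.

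Interchanging the order of summation yields $\frac{1}{n^2}\sum_{i, j=0}^{n-1} D_{i, j} T_{i, j}$, where $D_{i, j} = \sum_{v \in B} \psi(v)^{-i} \psi(v - 1)^{-j}$ and $T_{i, j} = \sum_{u \ne 0, 1} \chi^i(u) \chi^j(u - 1)$. The identities $\psi(-1) = \chi(-1) = -1$ give $D_{i, j} = (-1)^j c_{i, j}$, and the substitution $d = 1 - u$ together with the symmetry of Jacobi sums gives $T_{i, j} = (-1)^j J(\chi^i, \chi^j)$ whenever $i, j \ge 1$. The remaining cases, computed directly via the extended character convention, yield $T_{0, 0} = p - 2$, $T_{0, j} = -(-1)^j$ and $T_{i, 0} = -1$. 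Combining these with $c_{0, j} = c_{j, 0}$ (from the involution $b \mapsto 1 - b$ on $B$, which is closed because $-1 \in \langle\beta\rangle$) and with $\sum_{j=0}^{n-1} c_{j, 0} = 0$ (character orthogonality, using that $1 \notin B$), the boundary contributions collapse into $p|B|$, leaving
\[
  |S \cap (S + 1)| = \frac{1}{n^2}\left( p|B| + \sum_{i, j = 1}^{n-1} c_{i,j} J(\chi^i, \chi^j) \right).
\]

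The final step is to organise the remaining double sum by the value of $i + j$ relative to $n$. For $i + j \ne n$, the pairs $(i, j)$ and $(n - i, n - j)$ are complex conjugates, and using $c_{i, j} = c_{j, i}$ (again the involution $b \mapsto 1 - b$) together with $J(\chi^i, \chi^j) = J(\chi^j, \chi^i)$, these contributions collapse precisely into $\sum_{1 \le i \le j < n - i} 2(2 - \delta_{i, j})\Re(c_{i, j} J(\chi^i, \chi^j))$, matching the desired sum. The main obstacle lies in the diagonal $i + j = n$: here $J(\chi^i, \chi^{-i}) = -\chi^i(-1) = -(-1)^i$, and after expanding $c_{i, n-i}$ and interchanging summations the contribution reduces to $-\sum_{b \in B} \sum_{i=1}^{n-1} w_b^i$ with $w_b := -\psi(1 - b)/\psi(b) \in \mu_n$. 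The key observation is that $w_b = 1$ would force $1 - b = -b$, i.e.\ $1 = 0$ in $\Z/q\Z$, which is impossible; therefore each inner geometric sum equals $-1$, and the diagonal contributes exactly $|B|$. Adding this to $p|B|$ produces the advertised $(p + 1)|B|$.
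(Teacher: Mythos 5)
Your proof is correct and follows essentially the same route as the paper: CRT decomposition, expansion of the indicator via character orthogonality, reduction to Jacobi sums, separate treatment of the trivial-character and $i+j=n$ terms (the latter via the same geometric-series argument), and conjugate/transpose pairing of the remaining terms. The only cosmetic difference is that you use the orthogonality relation $\frac{1}{n}\sum_i z^i w^{-i}$ directly and keep the unshifted form $\chi(u-1)=\psi(v-1)$, tracking the resulting $(-1)^j$ factors, where the paper uses an equivalent polynomial product identity and substitutes $d=1-c$ earlier.
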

\begin{proof}
Under the Chinese remainder theorem, the set $S$ corresponds to
\[ \{ \, (c, \xi(c)) \, | \, c \in \F_p^\ast \, \} \ \subseteq \ \F_p \times (\Z / q \Z)\;, \]
so we have
\begin{align*}
 | \, S \cap (S + 1) \, | \ & = \
\left| \{ \, c \in \F_p \setminus \{0, 1\} \, | \, \exists b \in B \, : \,
 \xi(c-1) = b - 1 \text{ and } \xi(c) = b \, \} \right| \\
 & = \sum_{b \in B} \left| \{ \, c \in \F_p \setminus \{0, 1\} \, | \,  \chi(c-1) = \psi(b - 1) \text{ and } \chi(c) = \psi(b) \, \}  \right|.
\end{align*}
Each summand of the right-hand side can be rewritten as
\begin{equation} \label{eq:summand} 
\sum_{ c \in \F_p \setminus \{0, 1\} } 
\left( \frac{\psi(b)}{n} \prod_{\substack{\zeta \in \mu_n \\ \zeta \neq \psi(b)}} ( \chi(c) - \zeta ) \right)  \left( \frac{\psi(b - 1)}{n}  \prod_{\substack{\zeta \in \mu_n \\ \zeta \neq \psi(b - 1)}} ( \chi(c - 1) - \zeta ) \right),
\end{equation}
where we have used that
\[ \prod_{\substack{\zeta \in \mu_n \\ \zeta \neq \psi(b)}} (\psi(b) - \zeta ) = \psi(b)^{n-1} \prod_{\substack{\zeta \in \mu_n \\ \zeta \neq \psi(b)}} ( 1 - \zeta \psi(b)^{-1} ) = \psi(b)^{-1} \prod_{\substack{\zeta \in \mu_n \\ \zeta \neq 1}} (1 - \zeta) = n \psi(b)^{-1},  \]
and likewise for $\psi(b-1)$; to see the last equality, evaluate the polynomial $(X^n - 1)/(X - 1) = X^{n-1} + \ldots + 1$ at $1$. 

We can let the sum in~\eqref{eq:summand} range over every $c \in \F_p$ without affecting it. Indeed, the contribution of $c = 1$ is zero since $\chi(1) = 1$ and $\psi(b) \neq 1$ (because $1 \notin B$), and similarly the contribution of $c = 0$ is zero because $\chi(-1) = -1$ and $\psi(b-1) \neq -1$ (because $0 \notin B$). Writing $d = 1 - c$, one sees that expression~\eqref{eq:summand} then becomes 
\begin{align}\label{eq:summand2}
    \frac{\psi(b)\psi(b-1)}{n^2} \sum_{\substack{c, d \in \F_p \\ c + d = 1}}  \left(\prod_{\substack{\zeta \in \mu_n \\ \zeta \neq \psi(b) }} (\chi(c) - \zeta)\right)\left( (-1)^{n-1} \prod_{\substack{\zeta \in \mu_n \\ \zeta \neq \psi(1 - b) }} (\chi(d) - \zeta)\right) \nonumber \\
    =\frac{\psi(b)\psi(1-b)}{n^2} \sum_{\substack{c, d \in \F_p \\ c + d = 1}}  \prod_{\substack{\zeta \in \mu_n \\ \zeta \neq \psi(b) }} (\chi(c) - \zeta) \prod_{\substack{\zeta \in \mu_n \\ \zeta \neq \psi(1 - b) }} (\chi(d) - \zeta), 
\end{align}
which we can view as the evaluation  of
\[ \frac{\psi(b)\psi(1-b)}{n^2} \sum_{\substack{c, d \in \F_p \\ c + d = 1}} \frac{(X^n - 1)(Y^n - 1)}{(X - \psi(b))(Y - \psi(1 - b))}  \]
at $X = \chi(c), Y = \chi(d)$. One checks that
\[ \frac{\psi(b)\psi(1-b)(X^n - 1)(Y^n - 1)}{(X - \psi(b))(Y - \psi(1 - b))} = \sum_{i,j = 0}^{n-1} \psi(b)^{-i} \psi(1 - b)^{-j} X^i Y^j,\]
allowing us to rewrite~\eqref{eq:summand2} as
\[ \frac{1}{n^2} \sum_{\substack{c, d \in \F_p \\ c + d = 1}}
\sum_{i,j = 0}^{n-1} \psi(b)^{-i} \psi(1 - b)^{-j} \chi^i(c) \chi^j(d) = \frac{1}{n^2} \sum_{i , j = 0}^{n-1} \psi(b)^{-i} \psi(1 - b)^{-j} J(\chi^i, \chi^j). \]
Note that the terms for which $i = 0$ or $j = 0$ sum up to $p$, in view of~\eqref{eq:trivialcharacter}. Using that 
\[ J(\chi^i, \chi^{n-i}) = -\chi^i(-1) = (-1)^{i+1}, \]
which follows from~\eqref{eq:Jchichimin1}, the terms for which $i + j = n$ can be seen to sum up to $1$. Indeed,
\begin{align*}
    \sum_{i=1}^{n-1} \psi(b)^{-i} \psi(1 - b)^{i-n} J(\chi^i, \chi^{n-i})&=\sum_{i=1}^{n-1} \psi(b)^{-i} \psi(1 - b)^{i-n}(-1)^{i+1}\\
    &=-\sum_{i=1}^{n-1}\left(-\psi(b)^{-1}\psi(1 - b)\right)^{i}\\
    &=1-\sum_{i=0}^{n-1}\left(-\psi(b)^{-1}\psi(1 - b)\right)^{i}\\
    &=1-\frac{\left(-\psi(b)^{-1}\psi(1 - b)\right)^{n}-1}{\left(-\psi(b)^{-1}\psi(1 - b)\right)-1}=1.
\end{align*}
Altogether, we find that
\begin{align*}
    | \, S \cap (S + 1) \, | &= \sum_{b\in B}\frac{1}{n^2} \sum_{i , j = 0}^{n-1} \psi(b)^{-i} \psi(1 - b)^{-j} J(\chi^i, \chi^j) \\
    &= \frac{1}{n^2} \sum_{b\in B} \left( p + 1 + \sum_{\substack{1 \leq i, j \leq n - 1 \\ i + j \neq n}} \psi(b)^{-i} \psi(1 - b)^{-j} J(\chi^i, \chi^j) \right) \\
    &=\frac{1}{n^2} \left( (p + 1)|B| + \sum_{\substack{1 \leq i, j \leq n - 1 \\ i + j \neq n}} c_{i,j} J(\chi^i, \chi^j) \right)
\end{align*}
with $c_{i,j}$ as in the statement of the theorem. Next, using $-1 \in \langle \beta \rangle$, one checks that $b \mapsto 1 - b$ is an involution of $B$, from which it follows that $c_{i,j} = c_{j,i}$ and hence $c_{i,j} J(\chi^i, \chi^j) = c_{j,i} J(\chi^j, \chi^i)$ for all $i,j$.
The theorem then follows because $c_{i,j} J(\chi^i, \chi^j)$ is the complex conjugate of $c_{n-i,n-j} J(\chi^{n-i}, \chi^{n-j})$.
\end{proof}

\begin{remark}\label{rem:emptyB}
From the previous theorem it follows immediately that $| \, S \cap (S + 1) \, | = 0$ if $B=\emptyset$. Actually, one can see this already in the beginning of the proof. In some cases it is easy to show that $B=\emptyset$, see Examples \ref{ex:exampleq5} and~\ref{ex:exampleq7}, as well as Theorem \ref{th:withFermatprime}.
\end{remark}

\begin{example}
 If $q = 3$ then condition~\eqref{eq:condition} amounts to $r = 1$, therefore we should restrict to $p \equiv 3 \pmod 4$. The only option for $\beta$ is $-1$. Then $n=2$ and the corresponding set $B$ is just the singleton $\{ -1 \}$.  
 The theorem yields $| \, S \cap (S + 1) \, | = (p + 1)/4$. It is easy to check that this is never congruent to $-2 \pmod 3$, so it is impossible to construct a Neumaier graph using the construction in Theorem \ref{th:constructionpq}. But this we already knew from Section~\ref{ssec:nonadmissible}.
\end{example}

\begin{example} \label{ex:exampleq5}
  If $q = 5$ then $r = 1$ or $r = 2$. If $r = 1$, or in other words $p \equiv 3 \pmod 4$, then $\beta = -1$, but in this case $B = \emptyset$ so that $| \, S \cap (S + 1) \, | = 0$. Thus we focus on the case $r = 2$, i.e., the case $p \equiv 5 \pmod 8$. Then $\beta \in \{ \pm 2 \}$, and $B = \{ -2, -1, 2 \}$.
  \par For $\beta = 2$, the values of $\psi(b)$ are $- \mathbf{i}$, $-1$, $\mathbf{i}$, and those of $\psi(1-b)$ are $-\mathbf{i}$, $\mathbf{i}$, $-1$, for $b=-2$, $-1$ and $2$ respectively. We find that
  \[
    | \, S \cap (S + 1) \, | = \frac{1}{16} \left( \, 3p + 3 + 2\Re((-1 + 2 \mathbf{i})J(\chi, \chi)) + 4\Re((1 -2\mathbf{i})J(\chi, \chi^2)) \right). \]
  Letting $x,y$ be as in~\eqref{eq:sumoftwosquares}, i.e.,
  \[ p = x^2 + y^2, \qquad x \equiv - \left( \frac{2}{p} \right) \equiv 1 \pmod 4, \qquad y \equiv x \alpha^{\frac{p-1}{4}} \pmod p, \]
  we find from Theorem \ref{thm:formula} that
  \[ | \, S \cap (S + 1) \, | = \frac{3}{16} (p + 1 + 2x + 4y), \]
  using the results on Jacobi sums in the table in Example \ref{ex:jacobiq5}. Note our usage of $p \equiv 5 \pmod 8$ in several steps. For $\beta = -2$, an analogous computation shows that $| \, S \cap (S + 1) \, | = \frac{3}{16} (p + 1 + 2x - 4y)$.
\end{example}

\begin{example} \label{ex:exampleq7}
If $q = 7$ then $r = 1$, so $p \equiv 3 \pmod 4$. The possible values of $n = \ord(\beta)$ are $2$ and $6$. If $n =2$ then $\beta = -1$ and $B = \emptyset$, hence $| \, S \cap (S + 1) \, | = 0$. Therefore we assume $ n=6$, which implies that $\beta$ is a generator of $\F_7^\ast$ and that $p \equiv 1 \pmod 6$; consequently $p \equiv 7 \pmod{12}$. We focus on $\beta = 3$, leaving the analogous case $\beta = -2$ for the reader.
\par We have $B = \{ -3, -2, -1, 2, 3\}$, and we list the values of $\psi(b)$ and $\psi(1-b)$ for all $b\in B$:
\begin{center}
    \begin{tabular}{c|ccccc}
      $b$ & $-3$ & $-2$ & $-1$ & $2$ & $3$ \\\hline
      $\psi(b)$ & $-\zeta$ & $-\zeta^2$ & $-1$ & $\zeta^2$ & $\zeta$\\
      $\psi(1-b)$ & $-\zeta$ & $\zeta$ & $\zeta^2$ & $- 1$ & $-\zeta^2$
    \end{tabular}.
\end{center}
Theorem \ref{thm:formula} yields that $| \, S \cap (S + 1) \, |$ equals
 \begin{multline*}
     \frac{1}{36} \left[ 5p + 5 + 2 \Re\left( \tfrac{5 + \mathbf{i}\sqrt{3}}{2} J(\chi, \chi)\right) + 4 \Re\left( (2 - \mathbf{i}\sqrt{3} )J(\chi, \chi^2)\right)  + 4 \Re\left( (-2 + \mathbf{i}\sqrt{3} )J(\chi, \chi^3)\right) \right.\\\left. +4 \Re\left(\tfrac{-5-\mathbf{i}\sqrt{3}}{2} J(\chi, \chi^4)\right) + 2 \Re\left( \tfrac{1 + 3\mathbf{i}\sqrt{3}}{2} J(\chi^2, \chi^2)\right)  + 4\Re\left((2 - \mathbf{i}\sqrt{3}) J(\chi^2, \chi^3)\right) \right] 
\end{multline*}
which can be rewritten as
\begin{align*}
    &\frac{1}{36} \left( 5p + 5 + 2\left(\tfrac{-5u + 3v}{4}\right) + 4(2x + 3y) + 4(2x + 3y)   + 4\left(\tfrac{-5u + 3v}{4}\right) + 2 \left(\tfrac{r - 9s}{4}\right) + 4 \left(2x + 3y\right) \right)\\
    &=\frac{1}{36}\left(5p+5+24x+36y+\frac{r-9s-15u+9v}{2}\right)
\end{align*}
using the results on Jacobi sums in the table in Example \ref{ex:jacobiq7}, where $x$, $y$ are as in~\eqref{eq:sum2ofsquares} and where $r,s, u, v$ are defined correspondingly (see Example \ref{ex:jacobiq7}, where we take $g = \alpha$). This leads to the conclusion that
\begin{equation} \label{eq:formulaScapSp1q7}
 36 \cdot | \, S \cap (S + 1) \, | = \left\{ \begin{array}{ll} 
 5p + 5 + 10x + 36y & \text{if $y \equiv 0 \pmod 3$}, \\ 
 5p + 5 + 40x + 60y & \text{if $y \equiv 1 \pmod 3$}, \\
 5p + 5 + 22x + 12y & \text{if $y \equiv 2 \pmod 3$}. \\
 \end{array} \right.
\end{equation}
\end{example}

\begin{example}\label{ex:examplen6}   
Let $q = \ell_1^{e_1} \cdots \ell_k^{e_k} > 7$ be such that all its prime divisors $\ell_i$ satisfy $\ell_i\equiv1\pmod{6}$. We can choose $r = 1$, so $p \equiv 3 \pmod 4$. For each $i$, consider a primitive $6$-th root of unity $\beta_i \in (\Z / \ell_i^{e_i} \Z)^\ast$, i.e., we let $\beta_i$ be one of the two solutions to $X^2 - X + 1 = 0$. To see why there are two solutions: there are two solutions modulo $\ell_i$ because $\left(\frac{-3}{\ell_{i}}\right)=\left(\frac{\ell_{i}}{3}\right)=1$ since $\ell_i \equiv 1 \pmod 6$, and each of these solutions lifts to a unique solution modulo $\ell_i^{e_i}$ by Hensel's lemma~\cite[Thm.\,2.23]{numbertheory}. Using the Chinese remainder theorem, we combine these $\beta_i$'s into a single element $\beta \in (\Z / q \Z)^\ast$. It is clearly again of order $n = 6$, and it satisfies 
\begin{equation} \label{eq:betarecursion}
  \beta^2 = \beta - 1\;.
\end{equation}
Our choice of $\beta$ implies that $p \equiv 1 \pmod 6$; consequently $p \equiv 7 \pmod{12}$. 

\par Using~\eqref{eq:betarecursion} one checks that $\left\langle\beta\right\rangle=\left\{\beta,\beta-1,-1,-\beta,1-\beta,1\right\}$, and from $q > 7$ one sees that $B = \{\beta,1-\beta\}$. We immediately find  $\psi(\beta)=\psi(1-(1-\beta))=\zeta$ and $\psi(1-\beta)=\zeta^{-1}=-\zeta^{2}$. From Theorem \ref{thm:formula} we get:
\begin{align*}
    | \, S\cap(S+1) \, |&=\frac{1}{36}\left(2p+2+2\cdot2\cdot\left(-\frac{u}{2}\right)+4\cdot1\cdot x+4\cdot(-1)\cdot(-x)\right.\\ &\left. \qquad\qquad+4\cdot(-2)\cdot\frac{u}{2}+2\cdot2\cdot\frac{r}{2}+4\cdot1\cdot x\right)\\
    &=\frac{1}{36}\left(2p+2+12x+2r-6u\right)
\end{align*}
using the results on Jacobi sums in the table in Example \ref{ex:jacobiq7}, where $x$, $y$ are as in~\eqref{eq:sum2ofsquares} and where $r,s, u, v$ are defined correspondingly (see Example \ref{ex:jacobiq7}, where we take $g = \alpha$). This leads to the conclusion that
\begin{align}\label{eq:formulaScapSp1n6}
    36\cdot|S\cap(S+1)|=
    \begin{cases}
    2p+2+4x & \text{if }y\equiv0\pmod3,\\
    2p+2+16x+24y & \text{if }y\equiv1\pmod3,\\
    2p+2+16x-24y & \text{if }y\equiv2\pmod3.\\
    \end{cases}
\end{align}
\end{example}

A \emph{Fermat prime} is a prime of the form $2^{2^n}+1$ for some integer $n$. The only known Fermat primes are 3, 5, 17, 257 and 65537. It is conjectured there are no others. 

\begin{theorem}\label{th:withFermatprime}
	Let $p$ be an odd prime, let $q$ be an odd integer and let $a\in(\Z/pq\Z)^*$ be such that $a\pmod{p}$ is a generator of $(\Z/p\Z)^*,\cdot$ and such that $a^{\frac{p-1}{2}}\equiv-1\pmod{pq}$. Let $q=\prod_{i=1}^{m}\ell_{i}^{e_{i}}$ be the prime power decomposition of $q$. If there is an $i$ such that $\ell_{i}\geq5$ is a Fermat prime, and there is a $j$ such that $\ell_{j}\equiv3\pmod{4}$, then $\left|S_{pq}(a)\cap(S_{pq}(a)+1)\right|=0$.
\end{theorem}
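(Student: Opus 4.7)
The plan is to show that the set
\[ B=\{b\in\langle\beta\rangle\mid b-1\in\langle\beta\rangle\} \]
is empty, after which the conclusion $|S_{pq}(a)\cap(S_{pq}(a)+1)|=0$ is immediate by Remark~\ref{rem:emptyB}. As in Section~\ref{ssec:conditions}, let $r=\nu_{2}(p-1)\geq 1$, let $\beta\in(\Z/q\Z)^*$ denote the reduction of $a$ mod $q$, and let $\beta_{i}\in(\Z/\ell_{i}^{e_{i}}\Z)^*$ denote the further reduction mod $\ell_{i}^{e_{i}}$.

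First I would pin down $r$. From condition~\eqref{eq:condition} we have $2^{r}\mid \ell_{j}-1$. Since $\ell_{j}\equiv 3\pmod 4$ gives $\nu_{2}(\ell_{j}-1)=1$, this forces $r=1$. Now I would analyse the image of $\langle\beta\rangle$ modulo the Fermat prime $\ell_{i}=2^{2^{k}}+1\geq 5$ (so $k\geq 1$). Because $\ell_{i}-1$ is a power of $2$, the group $(\Z/\ell_{i}\Z)^*$ is a $2$-group. On the one hand the image of $\langle\beta_{i}\rangle$ in $(\Z/\ell_{i}\Z)^*$ has order dividing $\gcd(n_{i},\ell_{i}-1)$, which is a power of $2$ at most $2^{r}=2$. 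On the other hand, reducing $\beta_{i}^{n_{i}/2}=-1$ modulo $\ell_{i}$ shows that $-1$ lies in this image, so the image has order exactly $2$. Hence
\[ \langle\beta\rangle\pmod{\ell_{i}}\ \subseteq\ \{1,-1\}. \]

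With this in hand I would finish by a short case analysis on $b\in B$. Both $b$ and $b-1$ lie in $\langle\beta\rangle$, so each reduces to $\pm 1$ modulo $\ell_{i}$. If $b\equiv 1\pmod{\ell_{i}}$ then $b-1\equiv 0\pmod{\ell_{i}}$, contradicting that $b-1\in\langle\beta\rangle\subseteq(\Z/q\Z)^*$ (since $\ell_{i}\geq 5>1$). If $b\equiv -1\pmod{\ell_{i}}$ then $b-1\equiv -2\pmod{\ell_{i}}$, and for this to equal $\pm 1$ one would need $\ell_{i}$ to divide $1$ or $3$, again impossible since $\ell_{i}\geq 5$. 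Therefore $B=\emptyset$, as required.

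I do not expect any serious obstacle: the proof is essentially a 2-adic bookkeeping argument, where the $\ell_{j}\equiv 3\pmod 4$ condition is used solely to cut $r$ down to $1$, and the Fermat prime condition is used solely to ensure that $(\Z/\ell_{i}\Z)^*$ is a pure $2$-group so that the image of $\langle\beta\rangle$ mod $\ell_{i}$ is forced to be $\{\pm 1\}$. The mildest care is needed in checking that the image has order exactly $2$ (not $1$); this follows from $\beta_{i}^{n_{i}/2}=-1$ together with the fact that $n_{i}/2$ has odd part coprime to the $2$-power order of $\bar{\beta}_{i}$, so the reduction of $\beta_{i}^{n_{i}/2}$ equals a nontrivial power of $\bar{\beta}_{i}$.
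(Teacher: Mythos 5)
Your proposal is correct and follows essentially the same route as the paper: both use $\ell_j\equiv 3\pmod 4$ together with condition~\eqref{eq:condition} to force $r=1$, then use the Fermat prime condition to conclude that $\langle\beta\rangle$ reduces into $\{\pm 1\}$ modulo $\ell_i$, and finish with the same short case analysis showing $B=\emptyset$ so that Remark~\ref{rem:emptyB} applies. The only difference is cosmetic: your extra verification that the image mod $\ell_i$ has order exactly $2$ is not needed for the argument.
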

\begin{proof}
    From \eqref{eq:condition} and $\ell_{j}\equiv3\pmod{4}$ it follows immediately that $r=1$. Thus the order of $\beta_i \in (\Z / \ell_i^{e_i}\Z)^\ast$, the reduction of $a$ modulo $\ell_i^{e_i}$, equals $2s_i$ for some odd $s_i$. Further reducing mod $\ell_i$, we find an element $\overline{\beta}_i \in (\Z / \ell_i\Z)^\ast$ whose order divides $2s_i$. But since $\ell_i$ is a Fermat prime, the order of $(\Z / \ell_i\Z)^\ast$ is a power of $2$. Hence $\ord(\overline{\beta}_i)$ is equal to $1$ or $2$, or in other words $\overline{\beta}_i$ equals $-1$ or $1$.
    
    Now assume that $B = \{ \, b \in \langle \beta \rangle \, | \, b - 1 \in \langle \beta \rangle \, \}$ is non-empty, i.e.\ we have $\beta^r - 1 = \beta^s$ for certain exponents $r, s$. Reducing mod $\ell_i$ yields $\overline{\beta}_i^r - 1 = \overline{\beta}_i^s$. But given that $\overline{\beta}_i = \pm 1$ and $\ell_i \geq 5$, this is impossible. So $B = \emptyset$ and the theorem follows from Theorem \ref{thm:formula} and Remark \ref{rem:emptyB}.
\end{proof}


\begin{remark}\label{rem:nowithFermatprime}
    From Theorem \ref{th:withFermatprime} it follows that $\left|S_{pq}(a)\cap(S_{pq}(a)+1)\right|\not\equiv-2\pmod{q}$ if $q=p'q'$ with $p'$ a Fermat prime and $q'$ having a prime factor $p''\equiv3\pmod{4}$, given a prime $p$ and an integer $a$ satisfying the conditions of Theorem \ref{th:constructionpq}. So, for any such $q$ it is impossible to construct a Neumaier graph using the construction in Theorem \ref{th:constructionpq}. The five smallest values of $q$ that have such a decomposition, and that are not multiples of 3, are 35, 55, 95, 115, and 119.
\end{remark}



\subsection{An infinite family of Neumaier graphs for \texorpdfstring{$q = 5$}{q=5}}\label{ssec:infinite5}

We can now explain why there exist infinitely many prime numbers $p$ for which there exists an $a \in (\Z / 5p\Z)^\ast$
meeting the conditions from Theorem~\ref{th:constructionpq} and such that $| \, S \cap (S+1) \, | \equiv -2 \pmod 5$. This argument mainly relies on the Gaussian integer analogue of a celebrated result by Dirichlet~\cite[Sect.\,V.6]{neukirch} which states that, for any integer $m \neq 0$ and any integer $a$ that is coprime to $m$, there exist infinitely many prime numbers $p \equiv a \pmod m$. 
The analogue for the Gaussian integers $\Z[\mathbf{i}]$ and for Eisenstein integers $\Z[\zeta]$ is as follows.

\begin{theorem} \label{thm:dirichlet}
Let $R = \Z[\mathbf{i}]$ or $R = \Z[\zeta]$ and consider $m \in R \setminus \{0\}$. Let $z \in R$ be coprime with $m$. Then there exist infinitely many prime elements $\pi \in R$ such that $m \mid \pi - z$. 
\end{theorem}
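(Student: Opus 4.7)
The plan is to deduce Theorem \ref{thm:dirichlet} from the standard generalisation of Dirichlet's theorem on primes in arithmetic progressions to ray class groups of number fields. Let $K = \Q(\mathbf{i})$ or $K = \Q(\zeta)$, so that $R = \mathcal{O}_K$ is the ring of integers of an imaginary quadratic field. Both rings are principal ideal domains (in fact Euclidean), with finite unit groups $R^* = \{\pm 1, \pm \mathbf{i}\}$ and $R^* = \{\zeta^i : 0 \leq i < 6\}$ respectively.

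The first step is to invoke the classical theorem that, for any nonzero ideal $\mathfrak{m}$ of $\mathcal{O}_K$, every class of the ray class group of $K$ modulo $\mathfrak{m}$ contains infinitely many prime ideals. This is the analytic content of the argument; it is usually proved via non-vanishing of Hecke $L$-functions at $s = 1$, closely mirroring the classical proof over $\Z$, and can alternatively be obtained as a special case of the Chebotarev density theorem applied to the ray class field of $K$ modulo $\mathfrak{m}$. I would simply cite this, e.g.\ from Neukirch.

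The second step converts the ideal-theoretic statement into the element-theoretic one asked for in the theorem. Because $R$ is a PID with class number one, the ray class group modulo $(m)$ is exactly $(R/m)^\ast / \overline{R^*}$, where $\overline{R^*}$ denotes the image of the unit group. Given $z \in (R/m)^*$ coprime with $m$, applying the result of the previous step to the coset $z \cdot \overline{R^*}$ yields infinitely many prime ideals $\mathfrak{p} = (\pi')$ of $R$ with $\pi' \equiv u z \pmod m$ for some unit $u \in R^*$ depending on $\mathfrak{p}$. Replacing the generator $\pi'$ by $\pi := u^{-1} \pi'$ produces another prime element of $R$ generating the same ideal, but now satisfying $\pi \equiv z \pmod m$ exactly. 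Distinct prime ideals give, up to units, distinct prime elements, so one obtains an infinite family, as required.

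The main obstacle is the generalised Dirichlet theorem itself, which is genuinely analytic and for which the cleanest route is to cite the standard reference. Everything else amounts to bookkeeping about units, which is painless here thanks to the class number being one and $R^*$ being finite; in particular there are no archimedean conditions to worry about since $K$ is imaginary quadratic, so the ray class group modulo $(m)$ really is the naive quotient $(R/m)^\ast / \overline{R^*}$.
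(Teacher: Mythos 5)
Your proposal is correct and follows essentially the same route as the paper, which likewise deduces the statement from the generalised Dirichlet theorem for ray classes (citing Neukirch, Thm.\ V.6.2) applied to the modulus $(m)$ of $K = \mathrm{Frac}(R)$. The only difference is that you spell out the unit-adjustment bookkeeping (identifying the ray class group with $(R/m)^\ast/\overline{R^\ast}$ and rescaling the generator $\pi'$ by a unit), which the paper leaves implicit.
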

\begin{proof}
This follows from~\cite[Thm.\,V.6.2]{neukirch} or~\cite[Prop.\,28.10]{MITlecturenotes},
applied to the modulus $(m)$ for the number field $K = \text{Frac}(R)$.
\end{proof}


\begin{theorem} \label{thm:infinitelyq5}
There exist infinitely many prime numbers $p$ for which there exists an $a \in (\Z / 5p \Z)^\ast$ meeting the requirements from Theorem~\ref{th:constructionpq} and for which $S = S_{5p}(a)$ satisfies $| \, S \cap (S+1) \, | \equiv -2 \pmod 5$. 
\end{theorem}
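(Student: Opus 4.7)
The plan is to apply the Gaussian-integer analogue of Dirichlet's theorem (Theorem~\ref{thm:dirichlet}) to the explicit formula derived in Example~\ref{ex:exampleq5}. Recall from that example that when $r = 2$ (equivalently $p \equiv 5 \pmod 8$, which is forced since the alternative $r = 1$ gives $B = \emptyset$ and hence $|\,S\cap(S+1)\,| = 0 \not\equiv -2 \pmod 5$), choosing $\beta = 2$ yields
\[ |\,S \cap (S+1)\,| \ = \ \tfrac{3}{16}(p + 1 + 2x + 4y), \]
where $p = x^2 + y^2$, $x \equiv 1 \pmod 4$, and $y \equiv x \alpha^{(p-1)/4} \pmod p$. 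Since $16 \equiv 1 \pmod 5$, the target congruence $|\,S \cap (S+1)\,| \equiv -2 \pmod 5$ is equivalent to $p + 1 + 2x + 4y \equiv 1 \pmod 5$.

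First I would pick a target residue class for $(x,y)$ modulo $5$ compatible with this. The choice $(x,y) \equiv (1,4) \pmod 5$ works, since then $p \equiv 1 + 16 \equiv 2 \pmod 5$ and $p + 1 + 2x + 4y \equiv 2 + 1 + 2 + 16 \equiv 1 \pmod 5$. Combined with $x \equiv 1 \pmod 4$ and (automatic from $p \equiv 5 \pmod 8$ together with $x$ odd) $y \equiv 2 \pmod 4$, the Chinese remainder theorem packages this as a single Gaussian congruence $\pi := x + y\mathbf{i} \equiv 1 + 14\mathbf{i} \pmod{20}$. A short computation (using $\mathbf{i} \equiv -1 \pmod{1+\mathbf{i}}$ and $\mathbf{i} \equiv \mp 2 \pmod{2\pm\mathbf{i}}$) shows that $1 + 14\mathbf{i}$ is coprime to $20$ in $\Z[\mathbf{i}]$, so Theorem~\ref{thm:dirichlet} produces infinitely many Gaussian primes $\pi = x + y\mathbf{i}$ in this residue class. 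Since only finitely many Gaussian primes share a given norm, the rational primes $p = N(\pi) = x^2 + y^2$ form an infinite family, each automatically satisfying $p \equiv 5 \pmod 8$ and $p \equiv 2 \pmod 5$.

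For each such $p$, I would construct $a$ by taking $\beta = 2 \in (\Z/5\Z)^\ast$, picking a generator $\alpha$ of $\F_p^\ast$ for which $\alpha^{(p-1)/4} \equiv y/x \pmod p$, and then letting $a \in (\Z/5p\Z)^\ast$ be the Chinese remainder combination. Such an $\alpha$ exists because, as $\alpha$ ranges over the generators of $\F_p^\ast$, the fourth root $\alpha^{(p-1)/4}$ takes both values $\pm \mathbf{i}$ in $\F_p$. All hypotheses of Theorem~\ref{th:constructionpq} are then in place, and Example~\ref{ex:exampleq5} applies verbatim to give $|\,S \cap (S+1)\,| \equiv -2 \pmod 5$ by design.

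The step I expect to require the most care is this last sign matching: the Gaussian prime $\pi$ only pins $(x,y)$ up to the unit ambiguities $(\pm x, \pm y)$, so one must argue that the sign of $y$ dictated by the choice of generator $\alpha$ can always be reconciled with the sign coming from the chosen representative of $\pi$ in its residue class modulo $20$. If necessary, one can instead switch to $\beta = -2$ and invoke the mirrored formula $\tfrac{3}{16}(p + 1 + 2x - 4y)$ mentioned at the end of Example~\ref{ex:exampleq5}; since the target $(x,y) \equiv (1, -4) \equiv (1,1) \pmod 5$ can be handled by the symmetric Gaussian residue $1 + 6\mathbf{i} \pmod{20}$, no essential new argument is needed.
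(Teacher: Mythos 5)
Your proof is correct and follows essentially the same route as the paper: apply the Gaussian-integer Dirichlet theorem with modulus $20$ to produce infinitely many primes $\pi = x + y\mathbf{i}$ in a fixed residue class forcing $p \equiv 5 \pmod 8$ and $\tfrac{3}{16}(p+1+2x+4y) \equiv -2 \pmod 5$, then assemble $a$ from $\beta = 2$ and a generator $\alpha$ with $\alpha^{(p-1)/4} \equiv y/x$. The only difference is cosmetic: you use the class $1 + 14\mathbf{i} \pmod{20}$ where the paper uses $5 + 6\mathbf{i}$, and the paper's own Remark~\ref{rmk:densityq5} confirms yours is one of the four admissible classes (your worry about sign ambiguity is moot, since Theorem~\ref{thm:dirichlet} fixes the element $\pi$, not just the ideal, and $\alpha$ is chosen afterwards to match $y/x$ exactly as you do).
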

\begin{proof}
We apply Theorem~\ref{thm:dirichlet} to $R = \Z[\mathbf{i}]$ with $m = 20$ and $z = 5  + 6\mathbf{i}$, which one verifies to be coprime to each other (it suffices to check that $\gcd(z \overline{z}, 20) = \gcd(5^2 + 6^2, 20) = 1$), to conclude that there exist infinitely many Gaussian primes $\pi$ such that
\begin{equation} \label{eq:gaussiandirichlet} 
  20 \, \mid \, \pi - (5 + 6 \mathbf{i} ).
\end{equation}
Recall that, up to multiplication with a unit of $\Z[\mathbf{i}]$, i.e., up to multiplication with $\pm 1, \pm \mathbf{i}$, all Gaussian primes $\pi$ are either integer primes $p \equiv 3 \pmod 4$, or of the form $x + y \mathbf{i}$ for integers $x,y$ such that $p = \pi \overline{\pi} = x^2 + y^2$ is an integer prime; in the latter case we necessarily have $p = 2$ or $p \equiv 1 \pmod 4$. 

Writing $\pi = x + y\mathbf{i}$, one sees from~\eqref{eq:gaussiandirichlet} that $x \equiv 5 \pmod{20}$ and $y \equiv 6 \pmod{20}$. In particular $x$ and $y$ are non-zero, hence $\pi$ cannot be of the form $\pm p, \pm p\mathbf{i}$ for some integer prime $p \equiv 3 \pmod 4$. Thus we must be concerned with a Gaussian prime of the second kind: $x^2 + y^2$ is a prime $p \equiv 1 \pmod 4$ (indeed, the case $p = 2$ is easily ruled out as well).

Since $x \equiv 1 \pmod 4$ and $y \equiv 2 \pmod 4$, we in fact know that $p = x^2 + y^2 \equiv 5 \pmod 8$.  Let $\alpha$ be a generator of $\F_p^\ast$ satisfying $y = x \alpha^{(p-1)/4} \pmod p$; such a generator indeed exists because $y/x$ is a primitive $4$-th root of unity in $\F_p^\ast$, being a square root of $y^2/x^2 \equiv -1$. Let $\beta = 2 \in \F_5^\ast$ and combine it with $\alpha$ into an element $a \in (\Z / 5p \Z)^\ast$ using the Chinese remainder theorem. Then, by Example~\ref{ex:exampleq5}, the corresponding set $S = S_{5p}(a)$ satisfies:
\[ | \, S \cap (S + 1) \, | = \frac{3}{16}(p + 1 + 2x + 4y) \equiv \frac{3}{1}(0^2 + 1^2 + 1 + 2\cdot 0 + 4 \cdot 1) \equiv -2 \pmod 5, \]
as wanted.

Since this construction applies to every Gaussian prime satisfying~\eqref{eq:gaussiandirichlet}, of which there is an infinite number, we indeed obtain the existence of infinitely many primes $p$ with the desired property.
\end{proof}

\begin{remark} \label{rmk:densityq5}
Note that we could have arrived at the same conclusion using other congruence classes mod $20$, rather than that of $5 + 6\mathbf{i}$.
Indeed, considering the congruence class of $z = z_1 + z_2 \mathbf{i}$ mod $20$, the above reasoning applies as soon as $z_1 \equiv 1 \pmod 4$, $z_2 \equiv 2 \pmod 4$, $\gcd(z_1^2 + z_2^2, 20) = 1$ and $\tfrac{3}{16}(z_1^2 + z_2^2 + 2z_1 + 4z_2) \equiv -2 \pmod 5$.
The reader can check that, besides $5 + 6\mathbf{i}$, the congruence classes of $1 + 14\mathbf{i}$, $13 + 10\mathbf{i}$, 
$17 + 2\mathbf{i}$ mod $20$ satisfy these conditions, and this list is exhaustive.

Let $P_5$ denote the set of prime numbers $p$ with the requested properties, i.e., for which there exists an element $a \in (\Z / 5p \Z)^\ast$ meeting the requirements from Theorem~\ref{th:constructionpq} and for which the corresponding set $S$ satisfies $| \, S \cap (S+1) \, | \equiv -2 \pmod 5$. We claim that all $p \in P_5$ arise as the norm of a Gaussian prime $\pi = x + y\mathbf{i}$ that belongs to one of the above congruence classes modulo $20$. As before, let $\alpha \in \F_p^\ast$ and $\beta \in \F_5^\ast$ denote the reductions of $a$ modulo $p$ and modulo $5$, respectively. From Example~\ref{ex:exampleq5} we know that $p$ is necessarily congruent to $5 \pmod 8$, hence of the form $x^2 + y^2$ with $x \equiv 1 \pmod 4$ and $y \equiv 2 \pmod 4$. By changing the sign of $a$ if needed we can assume that $\beta = 2$, and by changing the sign of $y$ if needed we can assume that $y\equiv x\alpha^{(p-1)/4} \pmod p$. From Example~\ref{ex:exampleq5} it then follows that $\frac{3}{16}(x^2 + y^2 + 2x + 4y) \equiv -2 \pmod 5$. This proves the claim.

We can use this to argue that the set $P_5$ has natural density
\[ \delta(P_5) = \lim_{X \to \infty} \frac{ \left| \{ \, \text{prime numbers $p \leq X$} \, | \, p \in P_5 \, \} \right|  }{ \left| \{ \, \text{prime numbers $p \leq X$} \, \} \right|   } = \frac{7}{64}. \]
Indeed, a refinement of Theorem~\ref{thm:dirichlet} states that, for each $z$ in the above list, the density of prime ideals
of $\Z[\mathbf{i}]$ having a generator $\pi$ that satisfies $20 \mid \pi - z$ is $1/32$, where the denominator $32$ arises as the size of the ray class group of $\Q(\mathbf{i})$ for modulus $(20)$. Explicitly,
\begin{equation} \label{eq:chebotarev} 
\lim_{X \to \infty} \frac{\left| \{ \, \text{prime ideals $(\pi) \subseteq \Z[\mathbf{i}]$ of norm $p = \pi \overline{\pi} \leq X$} \, | \, \pi \equiv z \pmod{20} \, \} \right|  }{ \left| \{ \, \text{prime ideals $(\pi) \subseteq \Z[\mathbf{i}]$ of norm $p = \pi \overline{\pi} \leq X$} \, \} \right|   }  = \frac{1}{32}
\end{equation}
(see~\cite[Prop.\,26.10]{MITlecturenotes}, and see~\cite[Rmk.\,26.12]{MITlecturenotes} for why we can use the natural density instead of the Dirichlet density). 

Now observe that the limit in~\eqref{eq:chebotarev} is not affected when replacing the denominator with the cardinality ${ \left| \{ \, \text{prime numbers $p \leq X$} \, \} \right|   }$. Indeed, we can ignore the unique prime ideal of norm $2$ and rewrite this denominator as 
\begin{align*} 
 2 \cdot \left| \{ \, \text{primes numbers $p \leq X$} \, | \, p \equiv 1 \pmod 4 \, \} \right| + \ \big| \{ \, \text{prime numbers $p \leq \sqrt{X}$} \, | \, p \equiv 3 \pmod 4 \, \} \big|\;.
\end{align*}
Then the observation follows because the prime numbers are equidistributed among the residue classes $1 \pmod 4$ and $3 \pmod 4$.
As for the numerator, if $z \neq 13 + 10\mathbf{i}$ then, subject to the congruence $\pi \equiv z \pmod{20}$, one sees that $(\pi)$ is uniquely determined by $p = \pi \overline{\pi}$. This is different for $z = 13 + 10\mathbf{i}$, where both $(\pi)$ and $(\overline{\pi})$ contribute to the numerator. We conclude that the numerator of $\delta(P_5)$ is the sum of the numerators of~\eqref{eq:chebotarev} for $z = 5 + 6\mathbf{i}, 1 + 14\mathbf{i}, 17 + 2\mathbf{i}$ and half the numerator of~\eqref{eq:chebotarev} for $z = 13 + 10\mathbf{i}$, from which the density $1/32 + 1/32 + 1/32 + 1/64 = 7/64$ follows.
\end{remark}

\begin{example}
The Gaussian prime $\pi = -15 - 14\mathbf{i}$ of norm $p = \pi \overline{\pi} = 421$ satisfies~\eqref{eq:gaussiandirichlet}.
The generator $\alpha = 2$ of $\F_{421}^\ast$ meets the requirement $\alpha^{(p-1)/4} \equiv y/x \equiv (-14)/(-15) \pmod p$. With $\beta = 2 \in \F_5^\ast$ this combines into $a = 2 \in (\Z / 2105\Z)^\ast$. The corresponding set $S=S_{2105}(2)$ satisfies $| \, S \cap (S + 1) \, | = \frac{3}{16}(p + 1 -2 \cdot 15 - 4 \cdot 14) = 63 \equiv -2 \pmod 5$.
\end{example}

\begin{remark}\label{rem:infinitestrictq5}
    From Theorem \ref{thm:infinitelyq5} it follows that there are infinitely many primes $p$ for which an integer $a$ exists such that $S = S_{5p}(a)$ satisfies $| \, S \cap (S+1) \, | \equiv -2 \pmod 5$. But, using the notation from Example~\ref{ex:exampleq5}, it also follows that
    \[ | \, S \cap (S + 1) \, | = \frac{3}{16} (p + 1 + 2x + 4y)>\frac{3}{16} (p + 1 -4\sqrt{p})=\frac{3}{16} ((\sqrt{p}-2)^{2}-3)\;. \]
    Consequently, if $p\geq41$, then $t=\frac{|S \cap (S + 1)| +2}{5}>1$. So, the Neumaier graphs that we find using the construction in Theorem~\ref{th:constructionpq} are strictly Neumaier by Theorem \ref{th:mainconstructionstrictly}. Hence, the construction in Theorem \ref{th:constructionpq} produces infinitely many strictly Neumaier graphs for $q=5$.
\end{remark}

\subsection{An infinite family of Neumaier graphs for \texorpdfstring{$q = 7$}{q=7}}\label{ssec:infinite7}

In this section we prove a result for $q = 7$, which is analogous to Theorem \ref{thm:infinitelyq5}.

\begin{theorem}\label{thm:infinitely7}
There exist infinitely many prime numbers $p$ for which there exists an $a \in (\Z / 7p \Z)^\ast$ meeting the requirements from Theorem~\ref{th:constructionpq} and for which $S = S_{7p}(a)$ satisfies $| \, S \cap (S+1) \, | \equiv -2 \pmod 7$.
\end{theorem}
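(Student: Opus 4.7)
The plan is to mimic the argument of Theorem~\ref{thm:infinitelyq5}, replacing the Gaussian integers by the Eisenstein integers $\Z[\omega]$, where $\omega = \zeta^{2} = e^{2\pi\mathbf{i}/3}$, since Example~\ref{ex:jacobiq7} rests on the quadratic form $p = x^{2} + 3y^{2}$ and such primes factor in $\Z[\omega]$. The key correspondence is the following: using $\sqrt{-3} = 2\omega + 1$, an Eisenstein integer of the form $\pi = (x+y) + 2y\omega$ has norm $x^{2} + 3y^{2}$. Hence every Eisenstein prime $\pi = a + b\omega$ with $b$ even produces a representation of its norm as $x^{2} + 3y^{2}$ via $(x, y) = (a - b/2,\, b/2)$.

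I will apply Theorem~\ref{thm:dirichlet} to $R = \Z[\omega]$ with rational modulus $m = 84 = 2^{2} \cdot 3 \cdot 7$ and target $z = -9 + 2\omega$. The norm of $z$ equals $81 + 18 + 4 = 103$, which is coprime to $84$, so $z$ and $84$ are coprime in $\Z[\omega]$, and the theorem supplies infinitely many Eisenstein primes $\pi \equiv z \pmod{84}$. For every such $\pi = a + b\omega$ we have $a \equiv -9$ and $b \equiv 2 \pmod{84}$, so $(x, y) := (a - b/2,\, b/2)$ satisfies $x \equiv -10$ and $y \equiv 1 \pmod{42}$. This guarantees $x \equiv -1 \pmod{3}$, $y \equiv 1 \pmod{3}$, $x$ even and $y$ odd (so $p = x^{2} + 3y^{2}$ satisfies $p \equiv 3 \pmod{4}$ and $p \equiv 1 \pmod{3}$, i.e.\ $p \equiv 7 \pmod{12}$), and $(x, y) \equiv (4, 1) \pmod{7}$.

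Next I will verify that $\pi$ is actually a degree-$1$ prime. The inert primes of $\Z[\omega]$ are, up to units, the rational primes $\ell \equiv 2 \pmod{3}$, whose coordinates lie in $\{(\pm \ell, 0),\, (0, \pm \ell),\, (\pm\ell, \pm\ell)\text{ with equal signs}\}$, and none of these pairs can simultaneously satisfy $a \equiv -9$ and $b \equiv 2 \pmod{84}$ for any $\ell$. Therefore $p = N(\pi)$ is a rational prime with $p = x^{2} + 3y^{2}$ and the listed congruences. I will then construct the required $a \in (\Z/7p\Z)^{\ast}$: take $\beta = 3 \in \F_{7}^{\ast}$ (a generator of order $n = 6$) and a generator $\alpha$ of $\F_{p}^{\ast}$ chosen so that the sign convention in~\eqref{eq:sum2ofsquares} reproduces the same $y$ as our Dirichlet construction, and combine them by the Chinese remainder theorem. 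Such an $\alpha$ exists because $\alpha^{(p-1)/3}$ ranges over both primitive cube roots of unity in $\F_{p}^{\ast}$ as $\alpha$ varies over generators, and these two values correspond to the two opposite signs of $y$ allowed by $3y \equiv (2\alpha^{(p-1)/3}+1)x \pmod{p}$.

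Finally, since $y \equiv 1 \pmod{3}$, formula~\eqref{eq:formulaScapSp1q7} gives $36 \cdot | \, S \cap (S+1) \, | = 5p + 5 + 40x + 60y$. Reducing modulo $7$ using $p \equiv x^{2} + 3y^{2} \equiv 5 \pmod{7}$ and $(x, y) \equiv (4, 1) \pmod{7}$, a short calculation yields that this is $\equiv 5 \pmod{7}$, so $| \, S \cap (S+1) \, | \equiv -2 \pmod{7}$ because $36 \equiv 1 \pmod{7}$. The main obstacle is thus aligning the sign of $y$ forced by the Dirichlet construction with one that is actually realised by some generator of $\F_{p}^{\ast}$; ruling out inert $\pi$'s and verifying the mod-$7$ identity are the remaining checks, and every other step is strictly parallel to the $q = 5$ case of Theorem~\ref{thm:infinitelyq5}.
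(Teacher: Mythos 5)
Your proof is correct and follows essentially the same route as the paper's: apply the Dirichlet-type theorem in the Eisenstein integers with modulus $84$, extract the congruences on $x,y$ modulo $42$, rule out inert primes, fix the sign of $y$ via the choice of generator $\alpha$, and reduce formula~\eqref{eq:formulaScapSp1q7} modulo $7$. Your target class $-9+2\omega$ equals $73+2\zeta$ in the paper's coordinates, which is one of the $36$ admissible classes listed in Remark~\ref{rmk:densityq7} (landing in the case $y\equiv 1\pmod 3$ rather than the paper's $y\equiv 2\pmod 3$), so the two arguments differ only in this choice.
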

\begin{proof}
Here, we apply Theorem~\ref{thm:dirichlet} to conclude that there exist infinitely many Eisenstein primes $\pi \in \Z[\zeta]$ such that
\begin{equation} \label{eq:eisensteindirichlet}
   84 \, \mid \, \pi - (3 + 10\zeta).
\end{equation}
Up to multiplication with one of the six units 
$\pm 1$, $\pm \zeta$, $\pm \zeta^2$
of $\Z[\zeta]$, the Eisenstein primes $\pi$ are either integer primes $p\equiv 2 \pmod 3$, or of the form 
$c + d\zeta$ for integers $c,d$ such that $p = \pi \overline{\pi} = c^2 + cd + d^2$ is an integer prime, in which case we necessarily have $p = 3$ or $p \equiv 1 \pmod 3$. 

Writing $\pi = c + d\zeta$, we get from~\eqref{eq:eisensteindirichlet} that $c \equiv 3 \pmod {84}$ and $d \equiv 10 \pmod {84}$. In particular $c \neq 0$, $d \neq 0$ and $c \neq -d$, so that $\pi$ cannot be of the form $\pm p$, $\pm p \zeta$ or $\pm p \zeta^2 = \mp p \pm p \zeta$ for some integer prime $p \equiv 2 \pmod 3$. Thus we are concerned with an Eisenstein prime of the second kind: $c^2 + cd + d^2$ is a prime $p \equiv 1 \pmod 3$ (indeed, the case $p = 3$ is easily ruled out as well).

We now define $x = c + d/2$ and $y = d/2$, which are integers because $d \equiv 10 \pmod{84}$ implies that $d$ is even. Note that
\[ \pi = c + d \zeta = c + d \,\frac{1 + \mathbf{i}\sqrt{3}}{2} =  x + y \mathbf{i} \sqrt{3} \]
and that $x \equiv 3 + 5 \equiv 8 \pmod{42}$ and $y \equiv 5 \pmod{42}$.

In particular it follows that $x \equiv 0 \pmod 2$ and $y \equiv 1 \pmod 2$, so that $p = \pi \overline{\pi} = x^2 + 3y^2 \equiv 3 \pmod 4$ and therefore $p \equiv 7 \pmod{12}$. We also have $x \equiv -1 \pmod 3$ and $y \equiv -1 \pmod 3$, and we can choose a generator $\alpha$ of $\F_p^\ast$ such that $3y \equiv (2 \alpha^{(p-1)/3} + 1)x \pmod p$. Such a generator exists because $(3y - x)/(2x)$ is a primitive $3$th root of unity in $\F_p^\ast$; indeed, it is different from $1$ because $x \neq y$, and using $y^2/x^2=-1/3$ one checks that it cubes to $1$. Combining this choice of $\alpha$ with $\beta = 3$ into an element $a \in (\Z / 7p \Z)^\ast$ by means of the Chinese remainder theorem, we see from Example~\ref{ex:exampleq7} that the corresponding set $S$ satisfies
\[ | \, S \cap (S + 1) \, | = \frac{1}{36} (5p + 5 + 22x + 12y) \equiv \frac{1}{1}(5(1^2 + 3\cdot 5^2) + 5 + 22\cdot 1 + 12 \cdot 5) \equiv -2 \pmod 7,\]
as wanted.

Because this construction applies to every Eisenstein prime satisfying~\eqref{eq:eisensteindirichlet}, of which there are infinitely many, we obtain the existence of infinitely many primes $p$ with the desired properties.
\end{proof}

\begin{remark} \label{rmk:densityq7}
Note that, here again, there are other congruence classes to which the above reasoning applies besides that of $3 + 10\zeta$ mod $84$. Indeed, we could have worked with any  $z = z_1 + z_2 \zeta$ satisfying $z_1 \equiv 1 \pmod 2$, $z_2 \equiv 2 \pmod 4$, $z_1 + z_2/2 \equiv -1 \pmod 3$, $\gcd(z_1^2 + z_1 z_2 + z_2^2, 84) = 1$ and which is such that the formula for $| \, S \cap (S + 1) \, |$ from~\eqref{eq:formulaScapSp1q7} applied to $x = z_1 + z_2/2$ and $y = z_2/2$ yields a value congruent to $-2$ mod $7$. The reader can check that these properties hold for the following $36$ congruence classes mod $84$: 
\[ \begin{array}{cccccc} 1 + 50\zeta, & 3 + 10\zeta, & 5 + 42\zeta, & 7 + 74\zeta, & 9 + 34\zeta, & 11 + 66\zeta, \\
13 + 14\zeta, & 21 + 82\zeta, & 23 + 30\zeta, & 25 + 62\zeta, & 27 + 22\zeta, & 29 + 54\zeta, \\ 
31 + 2\zeta, & 33 + 46\zeta, & 35 + 78\zeta, & 37 + 26\zeta, & 39 + 70\zeta, & 41 + 18\zeta, \\
43 + 50\zeta, & 45 + 10\zeta, & 47 + 42\zeta, & 49 + 74\zeta, & 51 + 34\zeta, & 53 + 66\zeta, \\
55 + 14\zeta, & 63 + 82\zeta, & 65 + 30\zeta, & 67 + 62\zeta, & 69 + 22\zeta, & 71 + 54\zeta, \\
73 + 2\zeta, & 75 + 46\zeta, & 77 + 78\zeta, & 79 + 26\zeta, & 81 + 70\zeta, & 83 + 18\zeta, \\ 
\end{array} \]
where we note that the latter $18$ are just obtained from the former $18$ by adding $42$. 

Denote by $P_7$ the set of prime numbers $p$ with the requested property, i.e., for which there exists an $a \in (\Z / 7p \Z)^\ast$ meeting the requirements from Theorem~\ref{th:constructionpq} and for which the corresponding set $S$ satisfies $| \, S \cap (S+1) \, | \equiv -2 \pmod 7$. As in Remark~\ref{rmk:densityq5}, one can check that every $p \in P_7$ arises as the norm of an Eisenstein prime $\pi$ belonging to one of the above $36$ congruence classes. Moreover, for an Eisenstein prime $\pi$ in one of these congruence classes, it can be checked that no generator of $(\overline{\pi})$ (i.e., none of the six elements $\pm \overline{\pi}$, $\pm \zeta \overline{\pi}$, $\pm \zeta^2 \overline{\pi}$) belongs to that same congruence class. In other words, the list contains no analogue of the exceptional case $13 + 10\mathbf{i}$ from Remark~\ref{rmk:densityq5}. Mimicking the rest of the reasoning from Remark~\ref{rmk:densityq5}, and using that the ray class group of $\Q(\zeta)$ for modulus $(84)$ contains $432$ elements, we then conclude that $\delta(P_7) = \frac{36}{432}=\frac{1}{12}$.
\end{remark}

\begin{example}
The Eisenstein prime $\pi = 3 + 10\zeta = 8 + 5\mathbf{i}\sqrt{3}$ of norm $p = \pi \overline{\pi} = 139$ of course satisfies~\eqref{eq:eisensteindirichlet}. The generator $\alpha = 2$ of $\F_{139}^\ast$ meets the requirement $3y \equiv (2 \alpha^{(p-1)/3} + 1)x  \pmod p$ for $x=8$ and $y=5$. With $\beta = 3 \in \F_7^\ast$ this combines into $a = 836 \in (\Z / 973\Z)^\ast$. The corresponding set $S=S_{973}(836)$ satisfies $| \, S \cap (S + 1) \, | = \frac{1}{36}(5p + 5 + 22\cdot 8 + 12\cdot 5) = 26 \equiv -2 \pmod 7$. Note that $836^{65}\equiv26\pmod{973}$ and $\gcd(65,138)=1$, so $S_{973}(836)=S_{973}(26)$. This is the value that we find in Table~\ref{tab:constructionqprime}.
\end{example}

\begin{remark}\label{rem:infinitestrictq7}
    Arguing in the same way as in Remark \ref{rem:infinitestrictq5}, we see that the Neumaier graphs arising from the construction in Theorem \ref{th:constructionpq} for $q=7$ are necessarily strictly Neumaier if $p\geq127$. Hence, this construction produces infinitely many strictly Neumaier graphs for $q=7$.
\end{remark}

\subsection{Infinitely many infinite families of Neumaier graphs} \label{ssec:infiniteinf}

Finally, building on Example~\ref{ex:examplen6}, we show that there exist infinitely many $q$'s for which there exist infinitely many prime numbers $p$ admitting an $a \in (\Z / pq \Z)^\ast$ with the requested properties. First we prove a lemma about a specific system of modular equations.

\begin{lemma} \label{lem:conic}
Let $q$ be a product of (not necessarily distinct) prime numbers that are congruent to $1$ modulo $6$.
There exist integers $z_1, z_2$ such that 
\begin{enumerate}[label=(\roman*)]
    \item \label{it:cond1} $z_1 \equiv 1 \pmod 2$, $z_2 \equiv 2 \pmod 4$, 
    \item \label{it:cond2} $z_1 + z_2/2 \equiv -1 \pmod 3$, $z_2/2 \equiv 0 \pmod 3$,
    \item \label{it:cond3} $\gcd(z_1^2 + z_1 z_2 + z_2^2, 12q) = 1$,
    \item \label{it:cond4} $\left(2(z_1^2 + z_1z_2 + z_2^2) + 2 + 4z_1 + 2z_2 \right)/36 \equiv -2 \pmod q$.
\end{enumerate}
\end{lemma}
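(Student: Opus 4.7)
The plan is to reformulate everything through the substitution $z_1 = 6u_1 - 1$, $z_2 = 6u_2$ with $u_2$ odd, which turns out to capture conditions \ref{it:cond1} and \ref{it:cond2} exactly (check: $z_1$ is odd, $z_2 \equiv 2u_2 \equiv 2 \pmod 4$, $z_1 + z_2/2 = 6u_1 + 3u_2 - 1 \equiv -1 \pmod 3$, and $z_2/2 = 3u_2 \equiv 0 \pmod 3$; conversely, \ref{it:cond1} and \ref{it:cond2} pin down the residues $z_1 \pmod 6$ and $z_2 \pmod{12}$ to these forms). Writing $A = u_1^2 + u_1 u_2 + u_2^2$, a direct expansion yields
\[ p := z_1^2 + z_1 z_2 + z_2^2 = 36 A - 12 u_1 - 6 u_2 + 1, \]
and consequently $(2p + 2 + 4 z_1 + 2 z_2)/36 = 2A$. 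Hence condition \ref{it:cond4} collapses to $A \equiv -1 \pmod q$ (the factor $2$ is cancellable because $q$ is odd). With $u_2$ odd one also finds $p \equiv 3 \pmod 4$ and $p \equiv 1 \pmod 3$, so $\gcd(p,12) = 1$ comes for free and condition \ref{it:cond3} reduces to $\gcd(p,q) = 1$.

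Next I would work one prime at a time. Fix a prime divisor $\ell$ of $q$; by hypothesis $\ell \equiv 1 \pmod 6$, so $\ell \geq 7$ and $-3$ is a square in $\F_\ell$, making the affine conic $C : u_1^2 + u_1 u_2 + u_2^2 = -1$ smooth and split at infinity. A standard count then gives exactly $\ell - 1$ points in $C(\F_\ell)$. The remaining condition $p \not\equiv 0 \pmod \ell$, after using $A \equiv -1$, becomes the linear inequation $12 u_1 + 6 u_2 \not\equiv -35 \pmod \ell$; by B\'ezout the excluded line meets $C$ in at most two points. Therefore $C(\F_\ell)$ contains at least $\ell - 3 \geq 4$ good pairs. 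Smoothness of $C$ (the Jacobian $(2u_1 + u_2,\, u_1 + 2u_2)$ is nonzero on $C$ because $\ell \neq 2,3$) together with Hensel's lemma lifts any such pair to a solution in $(\Z / \ell^{e_i}\Z)^2$, and the condition $\gcd(p, \ell^{e_i}) = 1$ is equivalent to $p \not\equiv 0 \pmod \ell$ and hence preserved.

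Assembling the prime-power solutions via the Chinese remainder theorem produces $(u_1, u_2) \in (\Z/q\Z)^2$ with $A \equiv -1 \pmod q$ and $\gcd(p,q) = 1$. I would then lift to integers and, if $u_2$ happens to be even, replace it by $u_2 + q$; since $q$ is odd this flips the parity of $u_2$ without altering its class mod $q$, and in particular without changing $A \pmod q$ or $p \pmod q$. Setting $z_1 = 6u_1 - 1$, $z_2 = 6u_2$ then yields the sought integers. The main technical hurdle is the initial translation between $(z_1, z_2)$ and $(u_1, u_2)$ and the consequent simplification of \ref{it:cond4}; once that is in place the existence part is essentially the elementary observation that a smooth affine conic over $\F_\ell$ has enough points to survive finitely many linear exclusions.
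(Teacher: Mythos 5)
Your proof is correct and follows essentially the same route as the paper's: after normalizing conditions \ref{it:cond1}--\ref{it:cond3} by congruences (the paper imposes $z_1 \equiv 5$, $z_2 \equiv 6 \pmod{12}$ via the Chinese remainder theorem where you substitute $z_1 = 6u_1 - 1$, $z_2 = 6u_2$), both arguments reduce condition \ref{it:cond4} to finding a point on a smooth conic over $\F_\ell$ avoiding a line, count $\ell - 1 \geq 6$ affine points against at most $2$ excluded ones, lift by Hensel's lemma, and reassemble across the prime powers dividing $q$. Your change of variables merely recasts the paper's conic $Z_1^2 + Z_1Z_2 + Z_2^2 + 2Z_1 + Z_2 + 37 = 0$ in the tidier form $u_1^2 + u_1u_2 + u_2^2 = -1$.
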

\begin{proof}
It suffices to find integers $z_1, z_2$ meeting condition~\ref{it:cond4} and $\gcd(z_1^2 + z_1z_2 + z_2^2, q) = 1$, which are conditions modulo $q$. Indeed, such integers can be transformed into integers satisfying conditions~\ref{it:cond1}--\ref{it:cond4} by further imposing $z_1 \equiv 5 \pmod{12}$ and $z_2 \equiv 6 \pmod{12}$, which can be done using the Chinese remainder theorem. When looking for integers $z_1, z_2$ meeting condition~\ref{it:cond4} and $\gcd(z_1^2 + z_1z_2 + z_2^2, q) = 1$, it suffices to assume that $q = \ell^e$ for some prime $\ell \equiv 1 \pmod 6$ and some exponent $e \geq 1$, again by the Chinese remainder theorem. 

If $e = 1$ we are looking for a point $(z_1, z_2) \in \F_\ell^2$ on the conic
\begin{equation} \label{eq:conic} 
Z_1^2 + Z_1Z_2 + Z_2^2 + 2Z_1 + Z_2 + 37 = 0
\end{equation}
which moreover satisfies $z_1^2 + z_1z_2 + z_2^2 \neq 0$ or, equivalently, $2z_1 + z_2 + 37 \neq 0$. One checks that this conic is absolutely irreducible, hence non-singular, and that it has two $\F_\ell$-rational points at infinity, so there are $\ell - 1$ affine points over $\F_\ell$. At most two of these points satisfy the linear equation $2Z_1 + Z_2 + 37 = 0$. Therefore, since $\ell \equiv 1 \pmod 6$ is at least $7$, a point with the desired properties exists. 
 
If $e > 1$ then one again starts from a point $(\overline{z}_1, \overline{z}_2)$ on the conic~\eqref{eq:conic} viewed over $\F_\ell$, making sure that $\overline{z}_1^2 + \overline{z}_1\overline{z}_2 + \overline{z}_2^2 \neq 0$. Since it concerns a non-singular point, at least one of the partial derivatives of the left-hand side of~\eqref{eq:conic} does not vanish at it; let us assume that this is true for $\partial / \partial Z_1$, the other case is completely analogous. Now view the left-hand side of~\eqref{eq:conic} as a polynomial over $\Z / \ell^e \Z$ and substitute an arbitrary lift $z_2$ of $\overline{z}_2$ for $Z_2$. The remaining univariate polynomial in $Z_1$ satisfies the hypotheses of Hensel's lemma~\cite[Thm.\,2.23]{numbertheory} at $\overline{z}_1$, so we can lift the latter to obtain a solution $(z_1, z_2)$ of~\eqref{eq:conic} over $\Z / \ell^e \Z$. The condition $\gcd(z_1^2 + z_1z_2 + z_2^2, q) = 1$ is ensured because $(z_1, z_2)$ reduces to $(\overline{z}_1, \overline{z}_2)$ modulo $\ell$. This concludes the proof of the lemma.
\end{proof}

\begin{theorem}
Let $q$ be a product of (not necessarily distinct) prime numbers that are congruent to $1$ modulo $6$.
There exist infinitely many prime numbers $p$ for which there exists an $a \in (\Z / pq \Z)^\ast$ meeting the requirements from Theorem~\ref{th:constructionpq} and for which $S = S_{pq}(a)$ satisfies $| \, S \cap (S+1) \, | \equiv -2 \pmod q$.
\end{theorem}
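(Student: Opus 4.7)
The plan is to imitate the proof of Theorem~\ref{thm:infinitely7}, replacing the specific Eisenstein congruence class $3 + 10\zeta \pmod{84}$ by one provided by Lemma~\ref{lem:conic}. Concretely, I first apply that lemma to obtain integers $z_1, z_2$ satisfying conditions~\ref{it:cond1}--\ref{it:cond4}, and set $z = z_1 + z_2 \zeta \in \Z[\zeta]$. Condition~\ref{it:cond3} says that $z\overline{z} = z_1^2 + z_1 z_2 + z_2^2$ is coprime to $12q$, so $z$ is coprime with $12q$ in $\Z[\zeta]$, and Theorem~\ref{thm:dirichlet} (applied to $R = \Z[\zeta]$ and modulus $m = 12q$) produces infinitely many Eisenstein primes $\pi$ with $12q \mid \pi - z$.

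For each such $\pi$, write $\pi = c + d\zeta$, so that $c \equiv z_1$ and $d \equiv z_2 \pmod{12q}$. Setting $x = c + d/2$ and $y = d/2$ (both integers since $d \equiv 2 \pmod 4$), one has $\pi = x + y\mathbf{i}\sqrt{3}$. From conditions~\ref{it:cond1}--\ref{it:cond2} one reads off that $x$ is even, $y$ is odd, $x \equiv -1 \pmod 3$, and $y \equiv 0 \pmod 3$. Hence $p := \pi \overline{\pi} = x^2 + 3y^2 \equiv 3 \pmod 4$. Moreover $c$, $d$ and $c+d$ are all nonzero modulo $12$, so $\pi$ is not a unit multiple of a rational prime $\equiv 2 \pmod 3$: it must be an Eisenstein prime of the second kind, its norm $p$ is a rational prime, and being such a norm forces $p \equiv 1 \pmod 3$, so altogether $p \equiv 7 \pmod{12}$.

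Next, I construct $a \in (\Z/pq\Z)^\ast$ as in Example~\ref{ex:examplen6}: pick $\beta \in (\Z/q\Z)^\ast$ of order $6$ (Hensel's lemma supplies a primitive sixth root of unity modulo each prime power factor of $q$, and the Chinese remainder theorem combines them), and pick a generator $\alpha$ of $\F_p^\ast$ satisfying $3y \equiv (2\alpha^{(p-1)/3} + 1)x \pmod p$; as in Example~\ref{ex:exampleq7}, such an $\alpha$ exists because $(3y - x)/(2x)$ is a primitive cube root of unity in $\F_p^\ast$. CRT then combines $\alpha$ and $\beta$ into the required $a$. Since $y \equiv 0 \pmod 3$, the first case of~\eqref{eq:formulaScapSp1n6} applies and yields
\[
36 \cdot | \, S \cap (S+1) \, | \ = \ 2p + 2 + 4x \ \equiv \ 2(z_1^2 + z_1 z_2 + z_2^2) + 2 + 4z_1 + 2z_2 \pmod q,
\]
so condition~\ref{it:cond4}, together with the invertibility of $36$ modulo $q$ (which holds because $\gcd(6,q)=1$), gives $| \, S \cap (S + 1) \, | \equiv -2 \pmod q$, as required.

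The main technical content has already been absorbed into Lemma~\ref{lem:conic} and Example~\ref{ex:examplen6}; what remains is essentially book-keeping. The subtlest step is arranging the congruences on $z_1, z_2$ so as to force simultaneously $p \equiv 7 \pmod{12}$ and $y \equiv 0 \pmod 3$, placing us squarely in the first (cleanest) branch of~\eqref{eq:formulaScapSp1n6}, while also meeting the admissibility conditions of Section~\ref{ssec:conditions}; this is precisely why Lemma~\ref{lem:conic} bundles together conditions \ref{it:cond1}--\ref{it:cond4}.
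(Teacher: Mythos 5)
Your argument is essentially identical to the paper's own proof: both apply Lemma~\ref{lem:conic} to produce a suitable congruence class $z_1+z_2\zeta$ modulo $12q$, invoke the Dirichlet-type Theorem~\ref{thm:dirichlet} over $\Z[\zeta]$ to obtain infinitely many Eisenstein primes in that class, check that these are primes of the second kind with norm $p\equiv 7\pmod{12}$ and $y\equiv 0\pmod 3$, and then read off the desired congruence from condition~\ref{it:cond4} via~\eqref{eq:formulaScapSp1n6}.

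There is one boundary case you overlook: $q=7$ is itself a product of primes congruent to $1$ modulo $6$, but formula~\eqref{eq:formulaScapSp1n6} is derived in Example~\ref{ex:examplen6} under the hypothesis $q>7$, which guarantees $B=\{\beta,1-\beta\}$. For $q=7$ one has $\langle\beta\rangle=\F_7^\ast$ and $B=\{-3,-2,-1,2,3\}$, so the relevant formula is~\eqref{eq:formulaScapSp1q7} with different coefficients, and the congruence encoded in condition~\ref{it:cond4} of Lemma~\ref{lem:conic} no longer matches $|\,S\cap(S+1)\,|$. The paper avoids this by dispatching $q=7$ separately via Theorem~\ref{thm:infinitely7} and assuming $q>7$ thereafter; your proof needs the same (trivial) patch. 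Otherwise the book-keeping is correct.
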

\begin{proof}
If $q = 7$ then this follows from Theorem~\ref{thm:infinitely7}, so we can assume $q > 7$ and choose $\beta \in (\Z / q\Z)^\ast$ as in Example~\ref{ex:examplen6}, i.e., such that $\beta^2 = \beta - 1$; such a $\beta$ exists, as was explained there. 
\par Apply Lemma~\ref{lem:conic} to find integers $z_1, z_2$ satisfying~\ref{it:cond1}--\ref{it:cond4}. We then proceed as in Section \ref{ssec:infinite7}: according to Theorem~\ref{thm:dirichlet} there exist infinitely many prime elements $\pi \in \Z[\zeta]$ such that
\begin{equation} \label{eq:eisensteindirichlet2} 
12q \, | \, \pi - (z_1 + z_2 \zeta),
\end{equation}
where we note that $12q$ and $z_1 + z_2 \zeta$ are indeed coprime, thanks to condition \ref{it:cond3} in~Lemma \ref{lem:conic}.
Writing $\pi = c + d\zeta$, this implies that $c \equiv z_1$ and $d \equiv z_2$ modulo $12q$. Note, in view of~\ref{it:cond1}, that $c \neq 0$, $d \neq 0$ and $c \neq -d$. As a consequence $\pi$ cannot be of the form $\pm p, \pm p\zeta, \pm p \zeta^2 = \mp p \pm p\zeta$ for an integer prime $p$. Thus $\pi$ is an Eisenstein prime of the second kind, i.e.~$c^2 + cd + d^2$ is a prime $p \equiv 1 \pmod 3$ (indeed, the case $p = 3$ is easy to rule out).

Define $x = c + d/2$ and $y = d/2$, which are integers because $d \equiv z_2 \pmod{12q}$ is even, again in view of~\ref{it:cond1}. We then have $x \equiv z_1 + z_2 / 2 \pmod {6q}$ and $y \equiv z_2 / 2 \pmod{6q}$. In particular we find that $x \equiv 0 \pmod 2$ and $y \equiv 1 \pmod 2$, again by~\ref{it:cond1}, so that $p = \pi \overline{\pi} = x^2 + 3y^2 \equiv 3 \pmod 4$ and therefore $p \equiv 7 \pmod{12}$. Next, one sees that $x \equiv -1 \pmod 3$ and $y \equiv 0 \pmod 3$ in view of~\ref{it:cond2}. We can find a generator $\alpha$ of $\F_p^\ast$ that satisfies $3y \equiv (2 \alpha^{(p-1)/3} + 1)x \pmod p$, see Section \ref{ssec:infinite7}. Choosing  such a generator and combining it with $\beta$ using the Chinese remainder theorem, we then find an element $a \in (\Z / pq\Z)^\ast$ such that the corresponding set $S$ satisfies $| \, S \cap (S+1) \, | \equiv -2 \pmod q$; indeed, this follows from~\ref{it:cond4} and~\eqref{eq:formulaScapSp1n6}.

This reasoning applies to each of the infinitely many Eisenstein primes $\pi$ satisfying~\eqref{eq:eisensteindirichlet2}, from which the theorem follows. 
\end{proof}

\begin{example}
We choose $q = 13 \cdot 19 = 247$, and we check that $\beta = 69$ satisfies $\beta^2 = \beta - 1$ in $(\Z / q \Z)^\ast$; we can find $\beta = 69$ using the Chinese remainder theorem, having found $4$ and $12$ as solutions of $X^{2}=X-1$ in $\F_{13}^\ast$ and $\F_{19}^\ast$, respectively. When viewed over $\F_{13}$, the conic~\eqref{eq:conic} admits the point $(z_1, z_2) = (0, 1)$ and it satisfies $z_1^2 + z_1z_2 + z_2^2 \neq 0$. Similarly, over $\F_{19}$ we find that the point $(z_1, z_2) = (0, 14)$ has the requested properties. Modulo $q = 13 \cdot 19$, these points combine into $(z_1, z_2) = (0, 14)$. Finally, by further imposing $z_1 \equiv 5 \pmod{12}$ and $z_2 \equiv 6 \pmod{12}$, we find that $(z_1, z_2) = (2717, 1002)$ satisfies conditions~\ref{it:cond1}--\ref{it:cond4} modulo $12q = 12\cdot13\cdot19$. Within the congruence class of $z_1 + z_2 \zeta$ mod $12q$, we find the Eisenstein prime
\[ \pi = c + d\zeta, \qquad \text{where $c = z_1 - 12q$ and $d = z_2$,} \]
of norm $p = \pi \overline{\pi} = c^2 + cd + d^2 = 817519$. The respective values of $x = c + d/2$ and $y = d/2$ are $254$ and $501$. One checks that $\alpha = 15$ is a generator of $\F_p^\ast$ satisfying $3y = (2\alpha^{(p-1)/3} + 1)x$. Together with $\beta = 69$ this combines into $a = 22890547 \in (\Z / pq \Z)^\ast$, and the corresponding set $S=S_{pq}(a)$ can be seen to satisfy $| \, S \cap (S + 1) \, | = 45446 = 184 \cdot 247 - 2$, which is indeed congruent to $-2$ modulo $q$.
\end{example}

\begin{remark}\label{rem:infinitestrictqgeneral}
    Let $q$ be a product of (not necessarily distinct) prime numbers that are congruent to $1$ modulo $6$. Arguing in the same way as in Remarks~\ref{rem:infinitestrictq5} and~\ref{rem:infinitestrictq7}, we see that the Neumaier graph arising from the construction in Theorem \ref{th:constructionpq} for $q$ is necessarily strictly Neumaier if $p\geq18(q-2)+8\sqrt{18(q-2)+16}+31$. Hence, this construction produces infinitely many strictly Neumaier graphs for $q$.
\end{remark}

\section*{Acknowledgements}
Aida Abiad is partially supported by the FWO (Research Foundation Flanders, No 1285921N). Wouter Castryck is supported by the Research Council KU Leuven grant C14/18/067 and by CyberSecurity Research Flanders with reference VR20192203. Jack H.~Koolen is partially supported by the National Natural Science Foundation of China (No. 12071454), Anhui Initiative in Quantum Information Technologies (No. AHY150000) and the National Key R and D Program of China (No. 2020YFA0713100).

\bibliographystyle{abbrv}
\addcontentsline{toc}{chapter}{Bibliography}
\bibliography{bibliography}{}\newpage

\end{document}